\newtheorem{theorem}{Theorem} 
\newtheorem{proposition}{Proposition}
\newtheorem{corollary}{Corollary} 
\newtheorem{lemma}{Lemma}
\theoremstyle{definition}
\theoremstyle{remark}
\newtheorem{remark}{Remark}
\newcommand{\res}{\mathop{\rm res}}
\newcommand{\supp}{\mathop{\rm supp}}
\DeclareMathOperator{\Ai}{Ai}
\newcommand{\field}[1]{\mathbb{#1}}
\newcommand{\R}{\field{R}}
\newcommand{\N}{\field{N}}
\newcommand{\C}{\field{C}}
\newcommand{\const}{{\rm const}}
\renewcommand{\Re}{\mathop{\rm Re}}
\renewcommand{\Im}{\mathop{\rm Im}}
\def\XXint#1#2#3{{\setbox0=\hbox{$#1{#2#3}{\int}$}
\vcenter{\hbox{$#2#3$}}\kern-.5\wd0}}
\journal{Journal of Mathematical Analysis and Applications}
\begin{document}

\begin{frontmatter}

\title{Quadratic differentials and asymptotics of Laguerre polynomials with varying complex parameters}

\author[address1]{M.~J.~Atia}
\ead{jalel.atia@gmail.com}
\author[address2,address3]{A. Mart\'{\i}nez--Finkelshtein\corref{cor1}}
\ead{andrei@ual.es}
\cortext[cor1]{Corresponding author.}
\author[address2]{P.~Mart\'{\i}nez--Gonz\'alez}
\ead{pmartine@ual.es}
\author[address4]{F.~Thabet}
\ead{faouzithabet@yahoo.fr}
\address[address1]{Department of Mathematics, 
Facult\'e des sciences de Gab\'es
Gab\'es, TUNISIA}
\address[address2]{Department of Mathematics,
University of Almer\'{\i}a, SPAIN}
\address[address3]{Instituto Carlos I de F\'{\i}sica Te\'{o}rica y Computacional,
Granada University, SPAIN}
\address[address4]{ISSAT Gab\'es, 
Gab\'es, TUNISIA}

\begin{abstract}

In this paper we study the asymptotics (as $n\to \infty$) of the sequences of Laguerre polynomials with varying complex parameters
$\alpha$ depending on the degree $n$. More precisely, we assume that
$\alpha_n = n A_n, $ and $ \lim_n A_n=A \in \C$. 
This study has been carried out previously only for $\alpha_n\in \R$, but complex values of $A$ introduce an asymmetry that makes the problem more difficult.

The main ingredient of the asymptotic analysis is the right choice of the contour of orthogonality, which requires the analysis of the global structure of trajectories of an associated quadratic differential on the complex plane, which may have an independent interest. 

While the weak asymptotics is obtained by reduction to the theorem of Gonchar--Rakhmanov--Stahl, the strong asymptotic results are derived via the non-commutative steepest descent analysis based on the Riemann-Hilbert characterization of the 
Laguerre polynomials. 
\end{abstract}

\begin{keyword}
Trajectories and orthogonal trajectories of a quadratic differential \sep 
Riemann-Hilbert problems \sep  generalized Laguerre polynomials \sep  strong and weak asymptotics \sep  logarithmic potential \sep equilibrium.
\end{keyword}

\end{frontmatter}

\section{Introduction} \label{sec:intro}

One of the motivations of this paper is the asymptotic analysis of the generalized Laguerre polynomials, denoted by $L_n^{(\alpha)}$, with complex varying parameters, whose definition and  properties can be found for instance in Chapter V of Szeg\H{o}'s classic
memoir \cite{szego:1975}. They can be given explicitly by
\begin{equation}\label{explLag}
L_n^{(\alpha)} (z)=\sum_{k=0}^n
\binom{n+\alpha}{n-k}\frac{(-z)^{k}}{k!},
\end{equation}
or, equivalently, by the well-known Rodrigues formula
\begin{equation}\label{RodrLag}
L_n^{(\alpha)} (z)=\frac{(-1)^n}{n!}\, z^{-\alpha} e^{z}\left(
\frac{d}{dz} \right)^n \left[ z^{n+\alpha} e^{-z}\right]\,.
\end{equation}
Expressions \eqref{explLag} and \eqref{RodrLag} make sense for  complex values of the parameter $\alpha$, showing that $L_n^{(\alpha)}$ depend analytically on $\alpha$. When indeterminacy occurs in evaluating the coefficients in \eqref{explLag}, we understand them in the sense of their analytic continuation with respect to $\alpha$. With this convention we see that
$$
L_n^{(\alpha)} (z)=\frac{(-1)^n}{ n!} \, z^n + \text{ lower degree terms},
$$
so that  $\deg L_n^{(\alpha)} = n$ for all $\alpha \in \C$. Moreover, for any   $\alpha \in \C$, $L_n^{(\alpha)}$ is the unique (up to a multiplicative
constant) polynomial solution of the differential equation
\begin{equation}\label{difLag}
  z y''(z) +(\alpha + 1 - z) y'(z)+n y(z)=0,
\end{equation}
which shows that every zero of $L_n^{(\alpha)}$ different from $z=0$
must be simple.  In fact, multiple zeros (at the origin) can appear if and only if $\alpha \in
\{-1, -2, \dots, -n\}$. In this case the reduction formula
\begin{equation*}
L_{n}^{(-k)}(z) =
(-z)^{k}\frac{(n-k)!}{n!}\, L_{n-k}^{(k)}(z)\,,
\end{equation*}
shows that $z=0$ is a zero of $L_{n}^{(-k)}(z)$ of multiplicity $k$, see again \cite{szego:1975} for details.

Orthogonality conditions satisfied by the Laguerre polynomials can be easily derived from \eqref{RodrLag} iterating integration by parts, see e.g.~\cite{MR1858305}; we reproduce the arguments in Section \ref{sec:orthogonality} for the sake of completeness. The weight of orthogonality is the algebraic function $ z^{\alpha} e^{-z}$ and the integration goes along a contour in the complex plane.
The classical situation is $\alpha>-1$, in which case the orthogonality of
$L_n^{(\alpha)}(x)$   reduces to
\begin{equation*} 
    \int_0^{\infty} L_n^{(\alpha)}(x) L_m^{(\alpha)}(x)
    x^{\alpha} e^{-x}\, dt = 0, \qquad
    \mbox{ if } n \neq m.
\end{equation*}
As a consequence, for $\alpha > -1$ zeros of $L_n^{(\alpha)}(x)$ are positive and
simple.

In this paper we study sequences of Laguerre polynomials with, in general, complex parameters
$\alpha$ depending on the degree $n$. More precisely, we assume that
\begin{equation}\label{limits}
\alpha_n = n A_n, \quad \text{and}\quad \lim_n A_n=A \in \C.
\end{equation}
With these hypotheses, but only for \emph{real} parameters $\alpha_n$, sequences $L_n^{(\alpha_n)}$ were studied, in particular, in \cite{MR1858305, MO} (weak asymptotics) and in \cite{Kuijlaars/McLaughlin:01, Kuijlaars/Mclaughlin:04} (strong asymptotics). This paper is a natural continuation of this study, although complex values of $A$ introduce an asymmetry that makes the problem more difficult. The main ingredient of the asymptotic analysis is the right choice of the contour of orthogonality, which is related to the trajectories of an associated quadratic differential. Precisely the description of the structure of these trajectories (Section \ref{sec:trajectories}) constitutes the core of our contribution.

It is known that under assumptions \eqref{limits} we need to perform a linear scaling in the variable in order to fix the geometry of the problem.
Thus, we will study the sequence
\begin{equation}\label{sequencex}
p_n(z)=L_n^{(\alpha_n)}(n z)=\frac{(-n)^n}{n!}z^n + \text{ lower degree terms} .
\end{equation}
The zeros of  $p_n$ cluster along certain curves
in the complex plane, corresponding to
trajectories  (known also as Stokes lines) of a quadratic differential, depending on a parameter $A$, see Figure~\ref{fig:CerosTray}. This is not surprising: as it follows from the pioneering works of  Stahl \cite{Stahl:86}, and later of Gonchar and Rakhmanov \cite{Gonchar:84, Gonchar:87}, the support of the limiting zero-counting measure of such polynomials is a set of analytic curves exhibiting the so-called $S$-property. They can also be characterized as trajectories of a certain quadratic differential on the Riemann surface. The explicit expression of the quadratic differential associated to polynomials $p_n$ can be easily derived from the differential equation \eqref{difLag}, see \cite{MR1858305}  for details.

\begin{figure}
\centerline{\includegraphics[scale=0.5]{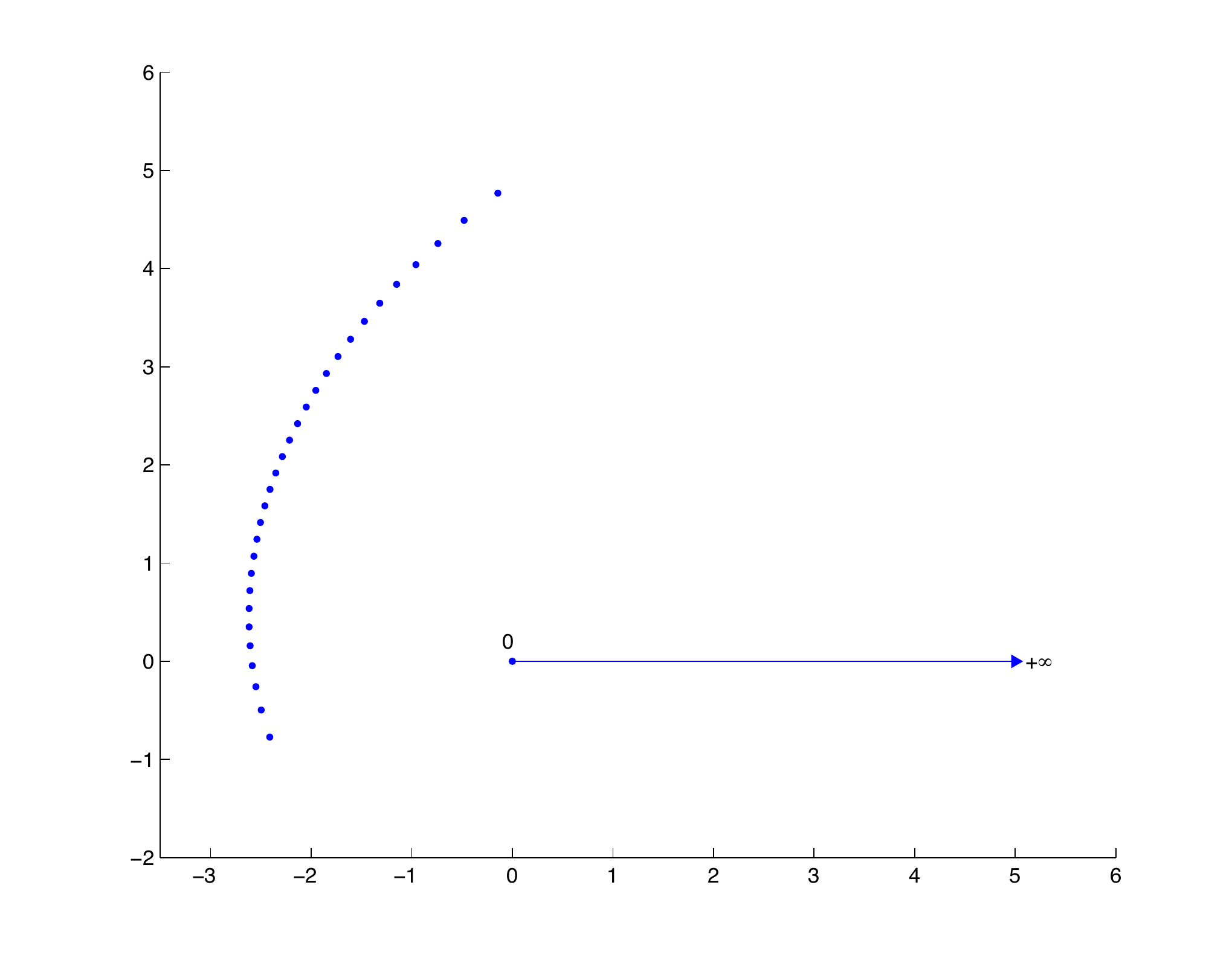}}
\caption{Zeros of $L_n^{(A n)}(nz)$ for
$A = -3+2i$   and $n=30$.
}
\label{fig:CerosTray}
\end{figure}

It turns out that for $A < -1$, the support of the limiting zero-counting measure for $p_n$'s  is a simple analytic and real-symmetric arc, which as $A$ approaches $-1$, closes
itself to form for $A = -1$ the well-known Szeg\H{o} curve \cite{MR1407500, szego:1975}. When $A\geq 0$, the support becomes an interval of the positive semi-axis.  Case $-1 < A < 0$ is special: generically, the support is connected, and consists of a closed loop surrounding the origin together with an interval of the positive semi-axis. However, when $\alpha_n$'s are exponentially close to integers, the support can split into two disjoint components, the closed contour and the interval, see \cite{Kuijlaars/Mclaughlin:04} for details.

In the case $A\notin \R$ the trajectories of these quadratic differentials, and subsequently, the support of the limiting zero-counting measure for $p_n$'s, have not been described; this is done in Section \ref{sec:trajectories}, which as we pointed out, is probably one of the central contributions of this paper.

Being the generalized Laguerre polynomials such a classical object, it is not surprising that their asymptotics has been well studied, using different and complementary approaches based on many characterizations of these polynomials. Many of these results correspond to the case $\alpha_n>-1$, when all their zeros belong to the positive semi-axis, see e.g.~\cite{Bosbach/Gawronski98}. Others study the asymptotics with a \emph{fixed} parameter $\alpha$, such as in \cite{MR2448665}, where additionally many interdisciplinary applications of Laguerre asymptotics are explained.
The Gonchar-Rakhmanov theory was used to find the weak asymptotics (or asymptotic zero distribution) of $p_n$'s for general $\alpha_n\in \R$ satisfying \eqref{limits}, see \cite{MR1858305}; we extend it to complex $\alpha_n$'s in this work, see Section~\ref{sec:results}. The critical case $A=-1$ in this context was analyzed in \cite{MO}, using the extremality of the family of polynomials.

The  non-linear steepest descent method of Deift--Zhou introduced in \cite{MR94d:35143}, and further developed in \cite{MR98b:35155} and \cite{MR96d:34004} (see also \cite{MR2000g:47048}), based on the Riemann--Hilbert characterization of  orthogonality by Fokas, Its,
and Kitaev \cite{Fokas92}, is an extremely powerful technique, rendering exhaustive answers in cases previously intractable. Following this approach, Kuijlaars and McLaughlin \cite{Kuijlaars/McLaughlin:01, Kuijlaars/Mclaughlin:04} found the strong asymptotics in the whole complex plane for the family $\{p_n\}$ and arbitrary values of $A\in \R$. The crucial ingredient of this asymptotic analysis is the choice of an appropriate path of integration on the complex plane, based on the structure of the trajectories of the associated quadratic differential.

Taking advantage of the results of Section \ref{sec:trajectories} we prove the existence and describe such paths of integration, which allows us to carry out the steepest descent analysis in the spirit of Kuijlaars and McLaughlin. As a result, we obtain in Section~\ref{sec:results} the detailed strong asymptotics for the rescaled Laguerre polynomials $p_n$. This also sheds light on one of the open questions mentioned in  \cite{MR2448665}.

The generalized Bessel polynomials $B_n^{(\alpha)}$ can be defined as
\begin{equation*}
B_n^{(\alpha)}(z)=z^n L_n^{(-2n-\alpha+1)}\left(\frac{2}{z}
\right)\,.
\end{equation*}
The asymptotic distribution of their zeros and their strong asymptotics is a straightforward consequence of
Theorems \ref{teoLag1} and \ref{thm:strongA} below, by replacing
$A \mapsto -(A+2)$ and $z \mapsto 2/z$.

\section{Trajectories of a family of quadratic differentials}\label{sec:trajectories}

Let $A\in \C$ be a complex parameter, for which we define the monic polynomials
$$
D(z)=D_A(z)=(z-A)^2-4z =(z-\zeta_+)(z-\zeta_-),
$$
with
\begin{equation}
\label{formulaZeros}
\zeta_\pm = \zeta_\pm(A) =  A+2\pm 2 \sqrt{A+1}=(1\pm \sqrt{A+1})^2.
\end{equation}
Since $D_A$ is real-symmetric with respect to the parameter $A$,  without loss of generality we can assume in what follows that $\Im (A)\geq 0$, and that the square root in \eqref{formulaZeros} stands for its main branch in the closed upper half plane. In what follows, we use  the notation
$$
\C_+=\{z\in \C:\, \Im z>0\}, \quad \C_-=\{z\in \C:\, \Im z<0\},
$$
while as usual, $\R_-$ and $\R_+$ stand for the open positive and negative real semi-axes, respectively.

On the Riemann sphere $\overline \C$ we define the quadratic differential
$$
\varpi_A= -\frac{D(z)}{z^2}\,dz^2,
$$
written in the natural parametrization of the complex plane. Its \emph{horizontal trajectories} (or just trajectories in the future) are the loci of the equation
$$
\Re \int^{z} \frac{\sqrt{D(t)}}{t}\, dt \equiv \const;
$$
the \emph{vertical} or \emph{orthogonal} trajectories are obtained by replacing $\Re$ by $\Im$ in the equation above. The trajectories and the orthogonal trajectories of  $\varpi_A$ produce a transversal foliation of the Riemann sphere $\overline \C$.

In order to study the global structure of these trajectories on the plane we start by observing that $\varpi_A$
has two zeros, $\zeta_\pm$, that are distinct and simple if and only if $A\neq -1$, and a
double pole at the origin if $A\neq 0$, with
$$
\varpi_A=\left(-\frac{A}{z^2} +\mathcal O(z^{-1})\right)  dz^2, \quad z\to 0.
$$
Another pole of $\varpi_A$ is located at infinity and is of order 4; with the parametrization $u=1/z$,
$$
\varpi_A=\left(-\frac{1}{u^4} +\mathcal O(u^{-3})\right)  du^2, \quad u\to 0.
$$
Points from $\overline{\C}\setminus\{0,\zeta_-, \zeta_+ ,\infty\}$ are regular.

The local structure of the trajectories is well known (see e.g.~\cite{MR0096806}, \cite{Pommerenke75}, \cite{Strebel:84}, \cite{MR1929066}). At any regular point trajectories look locally as simple analytic arcs passing through this point, and through every regular point of $\varpi_A$ passes a uniquely determined horizontal and uniquely determined vertical trajectory of $\varpi_A$, that are locally orthogonal at this point \cite[Theorem 5.5]{Strebel:84}.

For $A\neq -1,0$, there are $3$ trajectories emanating from $\zeta_\pm$ under equal angles $2\pi/3$. In the case of the origin, the trajectories have either the radial, the circular or the log-spiral form, depending on the vanishing of the real or imaginary part of $A$, see Figure \ref{fig:localstructure2}.

\begin{figure}[htb]
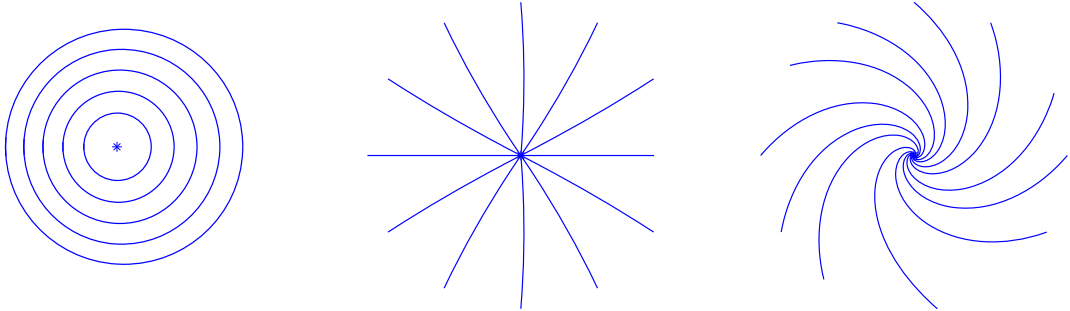

\centering \begin{tabular}{lll} \hspace{-1.5cm}\mbox{\begin{overpic}[scale=0.47]%
{localDoublePoleNegative}%
\end{overpic}} &
\hspace{-2.0cm}
\mbox{\begin{overpic}[scale=0.45]%
{localDoublePolePositive}%
\end{overpic}}&
\hspace{-2.0cm}
\mbox{\begin{overpic}[scale=0.45]%
{localDoublePoleComplex}%
\end{overpic}}
\end{tabular}
\caption{The local trajectory structure of $\varpi_A$ near the origin when $\Im A =0$ (left), $\Re A=0$ (center) and in the rest of the cases ($A\neq 0$).}
\label{fig:localstructure2}
\end{figure}

Regarding the behavior at infinity, we infer that  the imaginary axis is the only asymptotic direction of the trajectories of $\varpi_A$; there exists a neighborhood of infinity $D$ such that every trajectory  entering $D$ tends to $\infty$ either in the $+i\infty$ or $-i\infty$ direction, and the two rays of any trajectory which stays in $D$ tend to $\infty$ in the opposite asymptotic directions (\cite[Theorem 7.4]{Strebel:84}).

A trajectory $\gamma$  of $\varpi_A$ starting and ending at $\zeta_\pm$ (if exists) is called \emph{finite critical} or \emph{short}; if it starts at one of the zeros $\zeta_\pm$ but tends either to the origin or to infinity, we call it \emph{infinite critical trajectory} of $\varpi_A$. In a slight abuse of terminology, we say that such an infinite critical trajectory, if it exists,  \emph{joins} the zero with either the origin or the infinity.

The set of both finite and infinite critical trajectories of $\varpi_A$ together with their limit points (critical points of $\varpi_A$) is the \emph{critical graph} $\Gamma_A$ of $ \varpi_A$.

In this section we describe the global structure of the trajectories of $\varpi_A$, essentially determined by the critical graph $\Gamma_A$, as well as of its orthogonal trajectories.
Usually, the main troubles come from  the existence of the so-called recurrent trajectories, whose closure may have a non-zero plane Lebesgue measure. However, since $\varpi_A$ has two poles, Jenkins' Three Pole theorem asserts that it cannot have any recurrent trajectory (see  \cite[Theorem 15.2]{Strebel:84}).

One of the main result  of this section is the following theorem, which collects the properties of the critical graph of $\varpi_A$  (see Figure~\ref{fig:globalstructure}).
\begin{theorem} \label{thm:1}
For any $A\in \C$  there exists a short trajectory $\gamma_A$ of $\varpi_A$, joining $\zeta_-$ and $\zeta_+$. 

If $A\notin \R$, this trajectory is unique, homotopic in the punctured plane $\C\setminus \{0\}$ to a Jordan arc connecting $\zeta_\pm$ in $\C\setminus \R_+$, and it intersects the straight segment, joining $\zeta_-$ and $\zeta_+$, only at its endpoints,  $\zeta_-$ and $\zeta_+$.

Furthermore, for $A\in \C_+$ the structure of the critical graph $\Gamma_A$ of $\varpi_A$ is as follows:
\begin{itemize}
\item the short trajectory $\gamma_A$ of $\varpi_A$, joining $\zeta_-$ and $\zeta_+$;
\item  the  unique infinite critical trajectory $\sigma_{0}$ of $\varpi_A$ emanating from   $\zeta_-$ and diverging to the origin;
\item the  critical trajectory $\sigma_-$, emanating from   $\zeta_-$ and diverging towards $-i\infty$;
\item two  critical trajectories $\sigma_{\uparrow+}$ and $\sigma_{\downarrow+}$, emanating from   $\zeta_+$ and diverging towards $+i\infty$ and $-i\infty$, respectively.
\end{itemize}
$\Gamma_A$ splits $\C$ into three connected domains, two of them of the half-plane type. The domain, bounded by $\sigma_{-}\cup \gamma_A\cup \sigma_{\downarrow+}$, with the inner angle $2\pi/3$ at $\zeta_+$, is a strip domain and contains the origin.
\end{theorem}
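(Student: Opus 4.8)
The strategy is to analyze the quadratic differential $\varpi_A$ by combining global combinatorial constraints (Teichm\"uller-type counting of critical trajectories, Jenkins' Three Pole theorem already invoked in the text) with a quantitative use of the explicit primitive
\[
\Phi(z)=\int^{z}\frac{\sqrt{D(t)}}{t}\,dt,
\]
whose periods and boundary values can be computed in closed form because $D$ is quadratic. The key observation is that a trajectory is \emph{short} (finite critical) joining $\zeta_-$ and $\zeta_+$ if and only if $\Re\int_{\zeta_-}^{\zeta_+}\frac{\sqrt{D(t)}}{t}\,dt=0$ along some admissible path; so existence of $\gamma_A$ reduces to producing a path between the two zeros on which this real part vanishes. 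First I would set up the local picture: three trajectories leave each of $\zeta_\pm$ at angles $2\pi/3$ (valid since $A\ne-1$; the degenerate case $A=-1$ has $\zeta_+=\zeta_-$ and must be treated separately, with the short trajectory collapsing to a point), the double pole at the origin has the radial/circular/log-spiral local structure of Figure~\ref{fig:localstructure2} depending on $\arg A$, and infinity is a pole of order $4$ whose only asymptotic directions are $\pm i\infty$ with a strip-type local structure. No recurrent trajectories exist (Three Pole theorem), hence every critical trajectory either is short or tends to $0$ or to $\infty$, and the critical graph $\Gamma_A$ is a finite graph.

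For \textbf{existence of the short trajectory}, I would use a continuity/connectedness argument on the real part of the period. Consider the function $A\mapsto \Re\Phi_A(\zeta_+)-\Re\Phi_A(\zeta_-)$ computed along, say, the straight segment $[\zeta_-,\zeta_+]$ (when it avoids the origin) or a fixed homotopy class of arcs in $\C\setminus\{0\}$. On the real axis $A<-1$ this quantity is known to vanish (the classical real-parameter result, where $\gamma_A$ is the real-symmetric arc), and a symmetry/monodromy count forces it to vanish for all $A$: indeed one of the three trajectories emanating from $\zeta_-$ must "hit" $\zeta_+$ because there is no room for all of them to escape to $0$ or $\infty$ without violating the local strip structure at $\infty$ (at most two rays can go to each of $\pm i\infty$) and the Three Pole theorem forbidding recurrence. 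Concretely: count the total number of trajectory ends that can be absorbed at $0$ (the double pole absorbs infinitely many, but only those in a neighborhood) and at $\infty$ (the order-$4$ pole organizes trajectories into strips with exactly the $\pm i\infty$ asymptotics), and show that of the six rays leaving $\zeta_\pm$ at least two must pair up into a short trajectory; uniqueness when $A\notin\R$ then follows because a second short trajectory would, together with $\gamma_A$, bound a domain free of poles, contradicting that every $\varpi_A$-domain adjacent to such a configuration must contain a pole (a ring domain would force a closed trajectory, impossible here).

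For the \textbf{homotopy and non-crossing claims} when $A\notin\R$, I would argue that $\gamma_A$ cannot cross $\R_+$: on $\R_+$ one computes $D(x)=(x-A)^2-4x$, and for $A\notin\R$ the quantity $\Re\big(\sqrt{D(x)}/x\big)$ has a definite sign pattern (this is where the asymmetry $\Im A>0$ is used — it is exactly the obstruction that makes the complex case harder), so $\R_+$ is crossed transversally by trajectories in a controlled way and cannot contain an arc of $\gamma_A$; hence $\gamma_A$ lies in $\C\setminus\R_+$ and is homotopic to a Jordan arc there. That $\gamma_A$ meets the straight segment $[\zeta_-,\zeta_+]$ only at the endpoints follows from a convexity-type estimate: along that segment the integrand $\sqrt{D(t)}/t$ has non-vanishing real part except at isolated points, so $\Re\Phi$ is strictly monotone on the segment and the segment itself is "close to vertical" for the foliation, forcing any intersection of a horizontal trajectory with it to be a single point unless it is an endpoint.

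Finally, for the \textbf{full description of $\Gamma_A$ when $A\in\C_+$}, I would enumerate the remaining four rays leaving $\zeta_\pm$ (two at $\zeta_-$ besides the two ends of $\gamma_A$, and one at each of $\zeta_\pm$... more precisely six rays total, two consumed by $\gamma_A$, four remaining) and track where each must go using: (i) the $\pm i\infty$ asymptotic constraint and strip structure at $\infty$, (ii) the fact that exactly one ray can spiral into the origin from $\zeta_-$ (the $\sigma_0$ of the statement) — here I would show by a direct period computation that the ray from $\zeta_-$ "on the origin side" has $\Re\Phi$ tending to the same constant as at the pole, so it is captured by the double pole, whereas the analogous ray from $\zeta_+$ escapes to $i\infty$ because the sign of $\Im A$ breaks the symmetry, (iii) a parity argument that the rays landing at $\infty$ must be distributed as one to $-i\infty$ from $\zeta_-$ and one each way from $\zeta_+$. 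Once all rays are accounted for, the complement $\C\setminus\Gamma_A$ has three components whose types (two half-plane domains and one strip domain) are read off from Strebel's classification of $\varpi_A$-domains: a domain with no pole on its boundary is impossible, a domain with one double pole and otherwise trajectory boundary with the right turning number is a half-plane domain, and the domain bounded by $\sigma_-\cup\gamma_A\cup\sigma_{\downarrow+}$ has two ends going to $-i\infty$ and contains the origin (a double pole), which — combined with the inner angle $2\pi/3$ at $\zeta_+$ — identifies it as a strip domain.

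\textbf{Main obstacle.} The hard part is the asymmetric case $A\notin\R$: in the real case one exploits the reflection $z\mapsto\bar z$ to pin down $\gamma_A$ and force the rays into their slots, but for $\Im A>0$ that symmetry is gone. The crux is therefore the \emph{quantitative sign analysis} of $\Re\big(\sqrt{D(t)}/t\big)$ — equivalently, controlling the periods and the boundary values of $\Phi_A$ along $\R_+$, the segment $[\zeta_-,\zeta_+]$, and near the origin — precisely enough to (a) guarantee that exactly one ray from $\zeta_-$ (and none from $\zeta_+$) is absorbed by the double pole, and (b) rule out extra short trajectories and spurious crossings. All of this hinges on having an exact enough handle on the elementary-but-branch-sensitive function $\int\sqrt{D(t)}\,dt/t$, and getting the branch bookkeeping right across $\R_+$ is where the real work lies.
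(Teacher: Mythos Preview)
Your overall architecture --- count the six critical rays, use the period integral $\int_{\zeta_-}^{\zeta_+}\sqrt{D(t)}\,dt/t$ to detect short trajectories, invoke Teichm\"uller's lemma and the strip structure at $\infty$ --- matches the paper's. But there are two genuine gaps, and one misreading, that you would have to repair before this works.

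\textbf{The main gap: ``at most one ray to the origin''.} Your counting argument for existence needs the fact that \emph{at most one} of the six critical rays can be absorbed by the double pole at $0$. You wave at this with ``a direct period computation that the ray from $\zeta_-$ on the origin side has $\Re\Phi$ tending to the same constant as at the pole,'' but $\Phi$ has a logarithmic singularity at $0$, so $\Re\Phi$ does not tend to a constant there, and no such direct comparison is available. The paper's Lemma~\ref{lem:2spirals} handles this quite differently and it is the technical heart of the proof: if two critical trajectories $\gamma_\pm$ both spiral into the origin, one picks an \emph{orthogonal} trajectory $\sigma$ near $0$, which (because of the log-spiral local structure) must cross both $\gamma_\pm$ infinitely often. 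Choosing three successive crossing points $P_1,P_2,P_3$, one builds either a $\varpi_A$-polygon violating the Teichm\"uller identity \eqref{teich1} (when both rays emanate from the same zero), or two non-homotopic paths from $\zeta_-$ to $\zeta_+$ whose period integrals, by Lemma~\ref{lemma:integrals}, take the values $2\pi i$ and $2\pi i(A+1)$ --- but each path contains a nontrivial arc of the \emph{orthogonal} trajectory $\sigma$, on which $\Re\Phi$ is strictly monotone, so neither integral can be purely imaginary. This orthogonal-trajectory trick is not in your plan, and without it the counting does not close.

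\textbf{Uniqueness of $\gamma_A$.} Your argument that a second short trajectory would, together with $\gamma_A$, ``bound a domain free of poles'' is wrong in the case that matters: two short trajectories in \emph{different} homotopy classes in $\C\setminus\{0\}$ bound a domain that \emph{contains} the origin, so there is no pole-free ring domain to contradict. The paper's argument is the period computation of Lemma~\ref{lemma:integrals}: the integral equals $2\pi i$ in the class $\mathcal F_A$ and $2\pi i(A+1)$ otherwise, and for $A\notin\R$ only the first is purely imaginary, so only the class $\mathcal F_A$ can contain a short trajectory. Two short trajectories in the same class are then ruled out by the standard ring-domain argument. This same computation is what gives the homotopy statement --- which, incidentally, you have misread: the theorem asserts that $\gamma_A$ is \emph{homotopic in $\C\setminus\{0\}$} to an arc avoiding $\R_+$, not that $\gamma_A$ itself avoids $\R_+$; your proposed sign analysis of $\Re(\sqrt{D(x)}/x)$ on $\R_+$ is therefore aimed at the wrong target.

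\textbf{Which zero feeds the origin.} You assert that the ray to $0$ leaves from $\zeta_-$ ``because the sign of $\Im A$ breaks the symmetry''; this is true but not a proof. The paper settles it by continuity: once Lemma~\ref{lem:2spirals} guarantees a unique such ray for every $A\in\C_+$, the zero it emanates from is locally constant in $A$, and one checks the answer by perturbing a real $A>0$, where the structure is explicitly known.

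Your treatment of the segment-intersection claim (monotonicity of $\Re\Phi$ along $[\zeta_-,\zeta_+]$) is in the right spirit; the paper's Lemma~\ref{lemma:convexity} makes this precise by parametrizing the segment and observing that the sign-changing factor is a linear function of the parameter, hence changes sign at most once.
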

In other words, we claim that in the non-real case the critical graph of $\varpi_A$ is made of one short and 4 infinite critical trajectories. The notion of half-plane and strip domains essentially means that
$$
 \int^{z}  \frac{\sqrt{D_A(t)}}{t}\, dt
 $$
is a conformal mapping of this domain onto a vertical half-plane or a vertical strip, respectively. See Proposition~\ref{prop:confmapping} below or \cite[\S 10]{Strebel:84} for details.

\begin{figure}[htb]
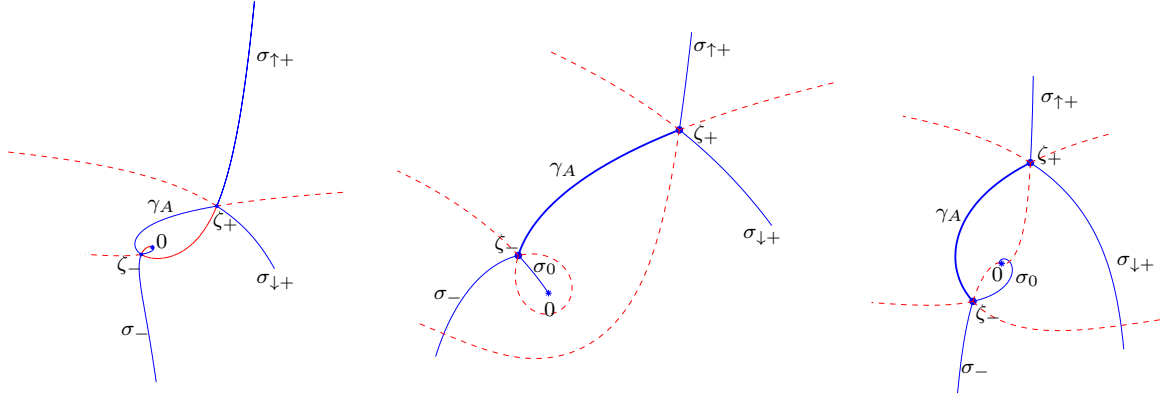

\centering \begin{tabular}{lll} \hspace{-0.2cm}\mbox{\begin{overpic}[scale=0.35]%
{Trajectories-1+i}%
    \put(35,37){\scriptsize $0 $}
  \put(26,32){\scriptsize $\zeta_- $}
    \put(48,42){\scriptsize $\zeta_+ $}
     \put(58,80){\scriptsize $\sigma_{\uparrow+}$}
          \put(58,29){\scriptsize $\sigma_{\downarrow+}$}
             \put(27,17){\scriptsize $\sigma_{-}$}
\put(33,46){\scriptsize $\gamma_A$}
\end{overpic}} &
\hspace{-1.cm}
\mbox{\begin{overpic}[scale=0.35]%
{Trajectories_4i}%
    \put(32,17){\scriptsize $0 $}
  \put(23,29){\scriptsize $\zeta_- $}
    \put(59,49){\scriptsize $\zeta_+ $}
 \put(59,65){\scriptsize $\sigma_{\uparrow+}$}
          \put(68,31){\scriptsize $\sigma_{\downarrow+}$}
             \put(12,20){\scriptsize $\sigma_{-}$}
 \put(30,25){\scriptsize $\sigma_{0}$}
\put(33,43){\scriptsize $\gamma_A$}
\end{overpic}}&
\hspace{-1.7cm}
\mbox{\begin{overpic}[scale=0.35]%
{Trajectories_-2and_i}%
   \put(43.5,35.5){\scriptsize $0 $}
  \put(39,25){\scriptsize $\zeta_- $}
    \put(56,69){\scriptsize $\zeta_+ $}
 \put(57,85){\scriptsize $\sigma_{\uparrow+}$}
          \put(78,40){\scriptsize $\sigma_{\downarrow+}$}
             \put(35,10){\scriptsize $\sigma_{-}$}
    \put(50,35){\scriptsize $\sigma_{0}$}
\put(28,53){\scriptsize $\gamma_A$}
\end{overpic}}
\end{tabular}
\caption{Typical structure of the critical graph $\Gamma_A$ for the trajectories (continuous line) and orthogonal trajectories  (dotted line) of $\varpi_A$ for  $\Re (A+1) =0$ (left), $\Re A=0$ (center) and in the rest of the cases with $A\in \C_+$.}
\label{fig:globalstructure}
\end{figure}

For $A\in \R$ the structure of $\Gamma_A$ has been thoroughly discussed in \cite{Kuijlaars/McLaughlin:01, Kuijlaars/Mclaughlin:04, MR1858305}. When $A>-1$, $\zeta_\pm$ are real,  one of the short trajectories is the real segment joining  $\zeta_-$ with $\zeta_+$, and the second one is a closed loop emanating from the leftmost zero $\zeta_-$ and encircling $0$. When $A< -1$, $\zeta_-$ is the complex conjugate of $\zeta_+$, there are two different short trajectories, joining $\zeta_-$ with $\zeta_+$, such that their union separates the origin from infinity. Case $A=-1$ is degenerate, when $\zeta_-=\zeta_+=1$. Hence, in our analysis we will concentrate on the case $A\notin \R$, and thus with our previous assumption,  $\Im (A)>0$, although the final results include $A\in \R$ as the limit case.

For the benefit of the reader we describe first the general scheme of the proof of Theorem~\ref{thm:1} before giving the technical details. The proof actually spans several lemmas and comprises the following steps: 
\begin{itemize}
\item Since we are interested in the (unique) short critical trajectory connecting both zeros of $D_A$, we start by studying the dependence of $\zeta_\pm$ from the parameter $A$;
\item from the perspective of the existence of this trajectory it is also important to calculate the possible values of the integral $ \int_{\zeta_-}^{\zeta_+}   t^{-1 } \sqrt{D_A(t)}_+ \, dt$; this is done in Lemma~\ref{lemma:integrals};
\item a key fact that allows us to ``test'' the admissibility of a hypothetical structure of $\Gamma_A$ is the so-called Teichm\"uller's lemma (formula~\eqref{teich1}).  Two of it straightforward consequences are Lemmas~\ref{lem:directions} and \ref{lem:2spirals}, which discuss the case of two infinite critical trajectories diverging simultaneously either to $0$ or infinity. Their combination yields the existence of exactly one infinite critical trajectory  (joining a zero of $\varpi_A$ with the origin) and of exactly one short trajectory connecting $\zeta_-$ and $\zeta_+$ (Corollary~\ref{cor:existenceTrajectories});
\item we conclude the proof of the structure of $\Gamma_A$ appealing again to the Teichm\"uller's lemma.
\item the claim made in the statement of Theorem~\ref{thm:1} about the intersection of the short trajectory with the straight segment, joining $\zeta_-$ and $\zeta_+$, requires an additional calculation, see Lemma~\ref{lemma:convexity}.
\item As a bonus, we also discuss an alternative argument for the existence of a short trajectory joining $\zeta_\pm$, which might be applicable to more general situations (Remark~\ref{remark2}).
\end{itemize}
We finish this section discussing the structure of the orthogonal trajectories of $\varpi_A$.

Now we turn to the detailed proofs, clarifying the possible location of the zeros $\zeta_\pm$ on the plane (see Figure~\ref{fig:parabola}):
\begin{lemma}\label{lemma:locationzeros}
Let $\tau$ be the locus of the parabola on $\C$ given parametrically by $\{  1-t^2+2 i t :\, t\in \R \}$. Then
\begin{itemize}
\item  $A \mapsto \zeta_+(A)$ is the conformal mapping of $\C_+$ onto the domain in $\C_+$ bounded by the ray $[1,+\infty)$ and by $\tau \cap \C_+$. In this mapping, the boundary $[-1,+\infty)$  corresponds to $[1,+\infty)$, while $(-\infty,-1)$ corresponds to $\tau \cap \C_+$.
\item $A \mapsto \zeta_-(A)$ is the conformal mapping of $\C_+$ onto the domain in $\C$ bounded by the ray $[0,+\infty)$ and by $\tau \cap \C_-$. In this mapping, the boundary $\R_+$ corresponds to itself,  the interval $[-1,0]$ corresponds to $[0,1]$, while $\R_-$  corresponds to $\tau \cap \C_-$. Moreover, the pre-image of $\R_-$ is the parabola in the upper $A$-half plane, given parametrically by $\{  -t^2+2 i t\in \C:\, t\geq 0 \}$.
\end{itemize}
\end{lemma}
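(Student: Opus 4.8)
The plan is to treat the two maps $A\mapsto\zeta_\pm(A)$ as compositions with the single building block $w=\sqrt{A+1}$, and to reduce everything to elementary properties of the square root and of the squaring map. First I would set $w=w(A)=\sqrt{A+1}$, the principal branch, which maps $\C_+$ conformally onto the first quadrant $Q=\{w:\ \Re w>0,\ \Im w>0\}$ (here one uses that $A+1$ ranges over $\{z:\Im z>0\}$ and the principal square root sends the upper half-plane to the upper half of the right half-plane). Under this change of variable, \eqref{formulaZeros} reads $\zeta_+=(1+w)^2$ and $\zeta_-=(1-w)^2$. For $\zeta_+$: as $w$ runs over $Q$, $1+w$ runs over the translated quadrant $\{\Re>1,\ \Im>0\}$, and then squaring this region — which is a conformal bijection since the region lies in a half-plane not containing $0$ — gives a domain whose boundary is the image of the two boundary rays. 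The ray $\{1+it:\ t\ge 0\}$ (coming from $w\in i\R_+$, i.e.\ $A\in(-\infty,-1]$) squares to $\{1-t^2+2it:\ t\ge 0\}=\tau\cap\overline{\C_+}$, and the ray $\{1+s:\ s\ge 0\}$ (coming from $w\in\R_+$, i.e.\ $A\in[-1,+\infty)$) squares to $[1,+\infty)$. This gives the first bullet, once one checks the orientation to see that the image lies in $\C_+$ and is the region \emph{between} these two boundary arcs.

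For $\zeta_-=(1-w)^2=(w-1)^2$, the same strategy applies but now $w-1$ runs over the quadrant $\{\Re>-1,\ \Im>0\}$, which \emph{does} straddle the imaginary axis; so I would split it along $\Re(w-1)=0$, i.e.\ along the vertical line $\Re w=1$ in the $w$-plane (equivalently, along the curve in the $A$-plane where $\Re\sqrt{A+1}=1$, which one computes to be the parabola $\{-t^2+2it:\ t\ge 0\}$ claimed in the statement — this is the last assertion of the lemma). On each of the two pieces squaring is injective, and the two images fit together along the ray that is the square of the segment $\{it:\ t\ge 0\}$ of the splitting line, namely $(-\infty,0]$; hence $\zeta_-$ is injective on all of $\C_+$ and its image is the region bounded by $[0,+\infty)$ (the square of $\{s-1:\ s\ge 1\}\cup\ldots$, coming from $w\in\R_+$, i.e.\ $A\in(-1,+\infty)$, together with the square of $\{w-1:\ 0<w<1\}=(-1,0)$, coming from $A\in(-1,0)$, which lands in $[0,1)$ — note the fold at $w=1$, $A=0$) and by $\tau\cap\C_-$ (the square of $\{it-1:\ t>0\}$, coming from $w\in i\R_+$, i.e.\ $A\in(-\infty,-1)$). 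Tracking the pieces $A\in\R_+$, $A\in[-1,0]$, $A\in\R_-$ through these identifications yields the boundary correspondence stated in the second bullet.

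The routine parts are the two elementary computations $(1+it)^2=1-t^2+2it$ and $(-1+it)^2=1-t^2-2it$, and the verification that $\Re\sqrt{A+1}=1$ describes the parabola $\{-t^2+2it\}$; I would relegate these to one or two lines. The main obstacle — and the only place that needs genuine care — is the map $A\mapsto\zeta_-$: because $w-1$ covers a quadrant that contains a neighborhood of the imaginary axis, the naive statement "squaring is conformal" fails, and one must argue that the two half-pieces reassemble into a single univalent image without overlap. The clean way to see this is to note that $A\mapsto\sqrt{A+1}-1$ is a homeomorphism of $\overline{\C_+}$ onto the closed quadrant $\{\Re w\ge -1,\ \Im w\ge 0\}$, that the squaring map restricted to this closed quadrant is injective (two points $w_1,w_2$ there with $w_1^2=w_2^2$ would satisfy $w_1=-w_2$, impossible unless both are $0$, since the quadrant meets $-$(quadrant) only at the origin), and open on the interior; injectivity plus openness on a domain gives a conformal bijection onto the image, and continuity up to the boundary then transports the boundary decomposition. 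Everything else is bookkeeping of which sub-arc of $\partial\C_+$ maps where.
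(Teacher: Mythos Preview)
Your approach is essentially the same as the paper's: factor $\zeta_\pm(A)=(1\pm\sqrt{A+1})^2$ through the principal square root (sending $\C_+$ to the open first quadrant) and then analyze the squaring map on the translated quadrant. The paper simply asserts that $(1\pm\sqrt{z})^2$ are conformal on the upper half plane and reads off the boundary images; you supply the missing justification, which is useful.

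However, your ``clean'' injectivity argument for $\zeta_-$ contains a genuine error. You claim that the closed region $\{\Re u\ge -1,\ \Im u\ge 0\}$ meets its negative only at the origin, so that $u\mapsto u^2$ is injective there. This is false: the intersection with $\{\Re u\le 1,\ \Im u\le 0\}$ is the whole real segment $[-1,1]$, and indeed $u=\tfrac12$ and $u=-\tfrac12$ both lie in your closed region and square to the same value. Concretely, $\zeta_-(-3/4)=\zeta_-(5/4)=1/4$, so $A\mapsto\zeta_-(A)$ is \emph{not} injective on $\overline{\C_+}$. This is exactly why the lemma's boundary correspondence is two-to-one on part of $\R$ (both $[-1,0]$ and $\R_+$ land in $[0,+\infty)$), a fold you yourself noticed earlier at $A=0$.

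The fix is simple and already present in your first argument: restrict to the \emph{open} $\C_+$, where $u=w-1$ has $\Im u>0$ strictly, so $u_1=-u_2$ forces $\Im u_1=-\Im u_2$, which is impossible. Your splitting argument along $\Re(w-1)=0$ is correct and suffices for conformality on $\C_+$; just drop the closed-set version and treat the boundary correspondence separately, accepting that it is not injective on the real axis.
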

\begin{remark}
Loosely, we can describe the dynamics of $\zeta_\pm(A)$ when $A$ travels the boundary $\R$ of $\C_+$ from $-\infty$ to $+\infty$ as follows:  both $\zeta_-(A)$ and $\zeta_+(A)$ come from infinity moving along $\tau$ in the upper ($\zeta_+$) and in the lower ($\zeta_-$) half plane, respectively, and hit the real line at $1$ simultaneously for $A=-1$. At that moment, $\zeta_+(A)$ starts moving to the right along the upper side of $[1,+\infty)$, while $\zeta_-(A)$ moves to the left, traveling the real line until the origin along its lower side. It reaches the origin for $A=0$; after that it ``climbs'' to the upper side of $\R_+$ and moves monotonically to $+\infty$.
\end{remark}
\begin{proof}
Let $\sqrt{z}$ denote the main branch of the square root in $\C\setminus \R_-$. Then it is easy to see that $(1+\sqrt{z})^2$ is a conformal mapping of the  upper half plane onto the domain bounded by the ray $[1,+\infty)$ and the locus of the parabola $\tau \cap \C_+$ (see the shadowed domain in Figure \ref{fig:parabola}, left). From \eqref{formulaZeros} it follows that this is precisely the domain of $\zeta_+(A)$ when $\Im(A)>0$; furthermore, $\zeta_+(A)$ is real (and thus, $\geq 1$) if and only if $A\geq -1$.

\begin{figure}[htb]
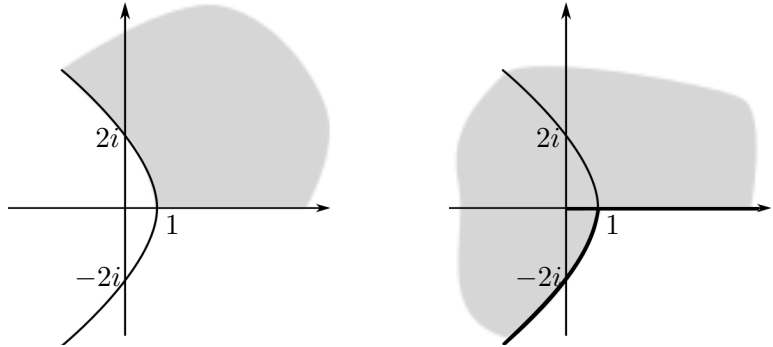

\centering
\begin{tabular}{ll}
\mbox{\begin{overpic}[scale=1]%
{parabola}%
       \put(45,33){\small $ 1 $}
       \put(25,58){\small $ 2i $}
       \put(19,18){\small $ -2 i$}
\end{overpic}} &
\hspace{1.0cm}
\mbox{\begin{overpic}[scale=1]%
{parabola2}%
 \put(45,33){\small $ 1 $}
       \put(25,58){\small $ 2i $}
       \put(19,18){\small $ -2 i$}
       \end{overpic}}
\end{tabular}
\caption{Domain of $\zeta_+(A)$ (left) and $\zeta_-(A)$ (right) for $\Im(A)>0$. The zero $ \zeta_-(A)$ is in the lower half plane if and only if $ (\Im(A))^2<- 4 \Re(A)$ and $\Im(A)>0$. }
\label{fig:parabola}
\end{figure}

On the other hand,  $(1-\sqrt{z})^2$ is a conformal mapping of the  the upper half plane onto the shadowed domain in Figure \ref{fig:parabola}, right,  bounded by the positive semi-axis and the locus of the parabola $\tau \cap \C_-$. Observe also its boundary behavior: $\R_+$ corresponds to itself,  the interval $[-1,0]$ is mapped onto $[0,1]$, while the negative semi axis corresponds to the locus of the parabola mentioned above. Moreover, the pre-image of the negative semi-axis is precisely $\tau \cap \C_+$.

From \eqref{formulaZeros} it follows that this domain is the image of $\C_+$ by $A\mapsto \zeta_-(A)$. In particular,  $\zeta_-(A)$ is positive only when $A\geq -1$, and then $ \zeta_-(A)\leq \zeta_+(A)$. When $A$ tends to a value on $(-1,0)$, the corresponding $\zeta_-(A)$ approaches the interval $(0,1)$ from the lower half plane, but if $A$ tends to a point on $\R_+$, the zero $\zeta_-(A)$ approaches  $\R_+$ from the upper half plane. We also see that $\zeta_-(A)$ is negative when  $A$ lies on the locus of the parabola given parametrically by $\{ z=-t^2+2 i t\in \C:\, t\leq 0 \}$  (in which case,    $\zeta_+(A)\notin \R$). Furthermore,  $ \zeta_-(A)$ is in the lower half plane if and only if
$$
(\Im(A))^2<- 4 \Re(A) \text{ and } \Im(A)>0.
$$
In particular, if $A$ lies in the open first quadrant, both zeros $\zeta_\pm(A)$ are in the upper half plane.
\end{proof}

We consider the family of Jordan arcs in the punctured plane $\C\setminus \{0\}$, connecting $\zeta_+$ and $\zeta_-$. Each such an arc is oriented from $\zeta_-$ to $\zeta_+$, which induces also its ``$+$'' (left) and ``$-$'' (right) sides.
Let us denote by $\mathcal F_A$ the subfamily of such arcs, homotopic in $\C\setminus \{0\}$ to a Jordan arc in  $\C\setminus \R_+$ (in other words, each $\gamma\in \mathcal F_A$ can be continuously deformed in $\C\setminus \{ 0\}$ to an arc not intersecting the positive real axis).
\begin{lemma}\label{lemma:integrals}
Assume that $A\in \C_+$, and that $\gamma$ is a Jordan arc in the punctured plane $\C\setminus \{0\}$, from $\zeta_-$ to $\zeta_+$. Denote by $\sqrt{D_A(z)}$ the single-valued branch of this function in $\C\setminus \gamma$ determined by the condition
\begin{equation}
\label{asymptoticcondition}
\lim_{z\to \infty} \frac{\sqrt{D_A(z)}}{z}=1,
\end{equation}
and let $\sqrt{D_A(z)}_+$ stand for its boundary values on the $+$ side of $\gamma$.

Then
\begin{equation}
\label{integral1}
 \int_{\gamma}   \frac{\sqrt{D_A(t)}_+}{t}\, dt =\begin{cases}
 2\pi i,  & \text{if } \gamma \in \mathcal F_A, \\
 2\pi i (A+1), & \text{otherwise.}
\end{cases}
\end{equation}
\end{lemma}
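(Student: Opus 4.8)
The plan is to compute the integral by a residue calculation, exploiting the fact that the integrand is a rational function times a square root whose branch cut is exactly $\gamma$. First I would observe that, since $\sqrt{D_A}$ is single-valued and holomorphic in $\C\setminus\gamma$, the function
$$
F(z)=\frac{\sqrt{D_A(z)}}{z}
$$
is meromorphic on $\overline{\C}\setminus\gamma$, with a simple pole at $z=0$ (provided $A\neq 0$) and an appropriate behavior at $\infty$. By the asymptotic normalization \eqref{asymptoticcondition},
$$
\frac{\sqrt{D_A(z)}}{z}=1-\frac{A+2}{z}+\mathcal O(z^{-2}),\qquad z\to\infty,
$$
because $D_A(z)=z^2-2(A+2)z+A^2$ and the square root with leading coefficient $+1$ expands accordingly. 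Hence $F(z)-1$ has a simple zero structure at infinity with residue $-(A+2)$ there, and $\res_{z=0}F(z)=\sqrt{D_A(0)}=\sqrt{A^2}$, where the sign of this square root is dictated by the chosen branch and by which side of $\gamma$ the origin lies on.

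The second step is to evaluate $\oint F$ around a large positively oriented circle: on one hand it equals $2\pi i$ times the residue at $\infty$ computed with the correct sign, i.e. $-2\pi i\cdot(-(A+2))$... more carefully, $\frac{1}{2\pi i}\oint_{|z|=R}F(z)\,dz \to$ the coefficient of $z^{-1}$ in the expansion of $F$ at infinity, namely $-(A+2)$; on the other hand, shrinking the contour down onto the cut $\gamma$ together with a small loop around $0$, the large-circle integral equals the jump integral $\int_\gamma\big(\sqrt{D_A}_+-\sqrt{D_A}_-\big)t^{-1}\,dt$ plus $2\pi i\,\res_{z=0}F$, depending on whether $0$ is enclosed. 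Since $\sqrt{D_A}_-=-\sqrt{D_A}_+$ on $\gamma$ (the two lips of the cut carry opposite branches), the jump integral is $2\int_\gamma \sqrt{D_A}_+ t^{-1}\,dt$, which is twice the quantity we want. Solving the resulting linear relation isolates $\int_\gamma \sqrt{D_A}_+ t^{-1}\,dt$.

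The dichotomy in \eqref{integral1} then comes entirely from the topological position of the origin relative to the deformation of the large circle onto $\gamma$: if $\gamma\in\mathcal F_A$, i.e. $\gamma$ is homotopic in $\C\setminus\{0\}$ to an arc avoiding $\R_+$, one can deform so that the origin is \emph{not} trapped between the circle and the cut, and the residue term at $0$ is absent (or enters with one sign); if $\gamma\notin\mathcal F_A$, the origin is on the other side, and $\res_{z=0}F$ contributes, shifting the answer. The precise bookkeeping requires pinning down $\sqrt{A^2}=\pm A$: using Lemma~\ref{lemma:locationzeros} to locate $\zeta_\pm$ and hence the two admissible homotopy classes of $\gamma$, one checks that in the first case the residue contribution vanishes and $\int_\gamma=\tfrac12\cdot 2\pi i\cdot 2=2\pi i$ after accounting for orientation, while in the second case the extra term $2\pi i\,\res_{z=0}F$ with $\res_{z=0}F=-A$ (say) produces $2\pi i(A+1)$. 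A clean way to organize this is to fix one concrete representative in each class (for instance a specific arc through $\C_+$ versus one crossing $\R_+$) and compute directly, then invoke homotopy invariance of the integral of the holomorphic form $F(z)\,dz$ in the complement of $\{0\}\cup\gamma$ to conclude for all members of the class.

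The main obstacle I anticipate is not the residue computation itself but the sign and branch bookkeeping: one must be scrupulous about (i) which branch of $\sqrt{D_A}$ the condition \eqref{asymptoticcondition} selects on each side of $\gamma$, (ii) the sign of $\sqrt{A^2}=\sqrt{D_A(0)}$ as a boundary value, which depends on whether $0$ is seen from the $+$ or $-$ side of $\gamma$, and (iii) the orientation conventions for the shrinking contour. Getting any one of these wrong flips $2\pi i$ to $-2\pi i$ or turns $A+1$ into $-A-1$. I would handle this by testing everything on the degenerate-but-tractable real cases $A>-1$ (where $\zeta_\pm$ are real and the arc can be taken as the segment, so the integral is elementary) and $A<-1$ (where $\zeta_-=\overline{\zeta_+}$), and by continuity/analyticity in $A$ extend the identity to all of $\C_+$; this sidesteps most of the delicate sign-chasing.
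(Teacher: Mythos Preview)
Your residue approach is exactly what the paper does: rewrite $\frac{1}{\pi i}\int_\gamma \sqrt{D_A}_+/t\,dt$ as $\frac{1}{2\pi i}\oint$ around the cut and identify it with $\res_{t=0}F+\res_{t=\infty}F=\sqrt{D_A(0)}+(A+2)$. However, your diagnosis of the dichotomy is off. The origin always lies in $\C\setminus\gamma$ (the arc is in the punctured plane), so in the deformation from the large circle onto the two lips of $\gamma$ the residue at $0$ is \emph{always} picked up; it is never ``absent'', and your sentence ``in the first case the residue contribution vanishes'' cannot hold since $\sqrt{D_A(0)}=\pm A\neq 0$ for $A\in\C_+$. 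What changes between the two homotopy classes is only the \emph{value} of $\sqrt{D_A(0)}$: the single-valued branch of $\sqrt{D_A}$ on $\C\setminus\gamma$, normalized at infinity, takes the value $-A$ at the origin when $\gamma\in\mathcal F_A$ and $+A$ otherwise. Plugging into $\sqrt{D_A(0)}+(A+2)$ gives $2$ and $2(A+1)$, hence the two cases of \eqref{integral1}.

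The paper pins down this sign directly for fixed $A$ via an auxiliary function: set $f(z)=\sqrt{D_A(z)}/(z-A)$, holomorphic on $\C\setminus(\gamma\cup\{A\})$ with $f(\infty)=1$ and $f(0)\in\{\pm1\}$. When $\gamma\in\mathcal F_A$ the ray $\R_+$ lies in the domain of $f$, and one checks that $1-4x/(x-A)^2$ never lands in $(-\infty,0]$ for $x>0$ and $A\in\C\setminus[-1,+\infty)$, so $f(\R_+)$ cannot cross $i\R$ and $f(0)=1$, i.e.\ $\sqrt{D_A(0)}=-A$. Your proposed workaround of testing real $A$ and extending by continuity in $A$ is more delicate than you suggest: the branch, and hence the integral, depend on the arc $\gamma$ and its homotopy class, not on $A$ alone, so you would need to specify a continuous family of arcs $\gamma(A)$ in a fixed homotopy class and argue continuity of the resulting branch; this is doable but is strictly more work than the paper's direct argument at fixed $A$.
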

\begin{proof}
With $A\in \C_+$, assume that $\gamma\in  \mathcal F_A$, and consider  the following auxiliary function
$$
f(z)=\frac{\sqrt{D_A(z)}}{z-A} =\sqrt{ 1 -\frac{4z}{(z-A)^2}},
$$
holomorphic in $\C\setminus ( \gamma\cup \{A \})$, and such that $f(\infty)=1$. According to our analysis of the location of the zeros $\zeta_\pm$, we can find the value of $f(0)$ continuing it analytically from $+\infty$ along the positive semi-axis.

Clearly, $f(0)\in \{-1, +1\}$. Assumption that $f(0)=-1$ implies that the image of $(0,+\infty)$ by $f$ must cross the imaginary axis, so that there exists a value $x>0$ for which
$$
1 -\frac{4x}{(x-A)^2}\leq 0,
$$
or equivalently, if $x^2-2(A+2 r)x+A^2=0$ for $x>0$ and $r\in [0,1)$. Since the discriminant of this last equation is $4r(A+r)$,  this is  impossible for $A\in \C\setminus[-1,+\infty)$, and we conclude that $f(0)=1$, or in other words, $\sqrt{D_A(0)}=-A$ for $\gamma\in \mathcal F_A$. Obviously, if $\gamma\notin \mathcal F_A$, then $\sqrt{D_A(0)}=A$, so that
\begin{equation} \label{sqrt0}
  \sqrt{D_A(0)}= \begin{cases}
 -A,  & \text{if } \gamma \in \mathcal F_A, \\
 A, & \text{otherwise.}
\end{cases}
\end{equation}

Denote
$$
I=\frac{1}{\pi i}\int_\gamma  \frac{\sqrt{D_A(t)}_+}{t}\, dt,
$$
so that
$$
I =\frac{1}{2\pi i} \oint \frac{\sqrt{D_A(t)}}{t}\, dt= \res_{t=0} \frac{\sqrt{D_A(t)}}{t}+ \res_{t=\infty} \frac{\sqrt{D_A(t)}}{t}=  \sqrt{D_A(0)}+ \res_{t=\infty} \frac{\sqrt{D_A(t)}}{t}.
$$
Since
\begin{equation}
 \label{residue_infty}
\frac{\sqrt{D_A(t)}}{t} = 1 -\frac{A+2}{t}+\mathcal O(t^{-2}), \quad t\to \infty,
\end{equation}
we have
$$
\res_{t=\infty} \frac{\sqrt{D_A(t)}}{t}=A+2.
$$
Now \eqref{integral1} follows from \eqref{sqrt0}.
\end{proof}

Recall that in our analysis we assume that $A\in \C_+$. In this situation, due to the local structure of the trajectories, we cannot have closed loops, and we can assert that the critical graph $\Gamma_A$  of $\varpi_A$ consists of at most $6$ trajectories (finite or not), and all the remaining trajectories diverge in both directions, being their limits either $0$ or $\infty$.

We can get additional information about the structure of $\Gamma_A$ using the Teichm\"uller's lemma, see  \cite[Theorem 14.1]{Strebel:84}. We understand by a \emph{$\varpi_A$-polygon} any domain limited only by trajectories or orthogonal trajectories of $\varpi_A$. If we denote by $z_j$ its corners, by $n_j$ the multiplicity of $z_j$ as a singularity of $\varpi_A$ (taking $n_j=1$ if $z_j\in \{ \zeta_-, \zeta_+\}$, $n_j=0$ if it is a regular point, and $n_j<0$ if  it is a pole), and by $\theta_j$ the corresponding inner angle at $z_j$, then
\begin{equation}
\label{teich1}
\sum_j \beta_j = 2 + \sum_i n_i, \quad \text{where } \beta_j = 1-\theta_j \frac{n_j+2}{2\pi},
\end{equation}
and the summation in the right hand side goes along all zeros of $\varpi_A$ inside the $\varpi_A$-polygon. In consequence, for a $\varpi_A$-polygon $\Omega$ not containing $\zeta_\pm$ inside we have
\begin{equation*}
\sum_j \beta_j = \begin{cases}
2, & \text{if } 0\notin \Omega, \\
0, & \text{if } 0\in \Omega.
\end{cases}
\end{equation*}

Straightforward calculation allows us to list the possible values for $\beta_j$'s at corners $z_j$ of a feasible $\varpi_A$-polygon:
\begin{itemize}
\item if $z_j$ is a regular point we have
$$
\beta_j=\begin{cases}
1/2, & \text{if } \theta_j= \pi/2, \\
-1/2, & \text{if } \theta_j= 3\pi/2.
\end{cases}
$$
\item if $z_j\in \{\zeta_-, \zeta_+ \}$, and the two sides of $\Omega$ confluent at $z_j$  belong to the same family of trajectories, we have
$$
\beta_j=\begin{cases}
0, & \text{if } \theta_j= 2\pi/3, \\
-1, & \text{if } \theta_j= 4\pi/3.
\end{cases}
$$
If on the contrary a horizontal and a vertical trajectories intersect at $z_j$ as sides of $\Omega$, we have
 $$
\beta_j=\begin{cases}
1/2, & \text{if } \theta_j= \pi/3, \\
-1/2, & \text{if } \theta_j= \pi, \\
-3/2, & \text{if } \theta_j= 5\pi/3.
\end{cases}
$$
\item At $z_j=\infty$ we  can only have $\theta_j\in \{0, \pi \}$, with $n_j=-4$, so that
$$
\beta_j=\begin{cases}
1, & \text{if } \theta_j= 0, \\
2, & \text{if } \theta_j= \pi.
\end{cases}
$$
\end{itemize}
Let us point out that the Teichm\"uller's lemma is applicable also to $z_j=0$, in which case we always take $\beta_j=1$. Indeed, if two trajectories diverge simultaneously to $z=0$, there is always an orthogonal trajectory (either also diverging to $z=0$, if $\Re A\neq 0$, or looping around the origin otherwise) intersecting both at the right angles. To each of these two corners, formed in this way, it corresponds the value of $\beta=1/2$, so their sum is $1$. Making the intersection points approach the origin we see that in the limit we can consider $\beta_j=1$ for $z_j=0$.

An immediate consequence of the calculations above is the following
\begin{lemma} \label{lem:directions}
Assume that there exist two  infinite critical trajectories $\gamma_1$, $\gamma_2$, emanating from a zero ($\zeta_-$ or $\zeta_+$) and diverging to infinity. Let $\Omega$ be the infinite domain whose boundary is $\gamma_1\cup \gamma_2$, with the inner angle $\theta=2\pi/3$ at this zero. Then $0\notin \Omega$, and
$\gamma_1$ and $\gamma_2$ diverge to $\infty$ in the opposite directions. 

In particular, all three trajectories (or orthogonal trajectories) emanating from a zero cannot diverge simultaneously to $\infty$.
\end{lemma}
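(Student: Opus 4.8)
The plan is to apply Teichm\"uller's lemma~\eqref{teich1} to the $\varpi_A$-polygon $\Omega$ whose only corner is the zero (say $\zeta_+$) from which $\gamma_1$ and $\gamma_2$ emanate, with inner angle $\theta = 2\pi/3$ at that corner. First I would argue that $\Omega$ is indeed an admissible $\varpi_A$-polygon: both boundary arcs are trajectories of $\varpi_A$ diverging to infinity, and since $\varpi_A$ has no recurrent trajectories (Jenkins' Three Pole theorem, already invoked above) and we are in the case $A\in\C_+$ with no closed loops, the two arcs cannot reintersect, so $\gamma_1\cup\gamma_2$ together with the point $\zeta_+$ bounds a simply connected infinite region. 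I would also note that at infinity the boundary of $\Omega$ contributes corners of the type allowed for $z_j = \infty$, namely $\theta_j\in\{0,\pi\}$ with $\beta_j\in\{1,2\}$, by the behaviour of trajectories near the order-$4$ pole described earlier (Strebel, Theorem~7.4).

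Next I would split into the two cases according to whether $0\in\Omega$ or $0\notin\Omega$ and read off~\eqref{teich1}. If $\zeta_-\notin\Omega$ and $0\notin\Omega$, the right-hand side is $2$; the corner at $\zeta_+$ contributes $\beta = 0$ (since $\theta = 2\pi/3$ and the two confluent sides belong to the same, horizontal, family), so the corner(s) at infinity must contribute a total of $2$. Since each corner at infinity contributes $1$ (if $\theta_j=0$) or $2$ (if $\theta_j=\pi$), this forces exactly one corner at infinity with angle $\pi$, i.e. the two rays $\gamma_1,\gamma_2$ approach $\infty$ from opposite asymptotic directions (both tangent to the imaginary axis, one towards $+i\infty$ and one towards $-i\infty$). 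If instead $0\in\Omega$, then adding the corner value $\beta_j = 1$ at $z_j = 0$ and the corner value $0$ at $\zeta_+$, the contribution at infinity would have to equal $2 - 1 = 1$, which means a single corner at infinity of angle $0$: but angle $0$ at infinity means the two rays are asymptotic in the \emph{same} direction, hence the infinite part of $\partial\Omega$ pinches — geometrically, $\gamma_1$ and $\gamma_2$ would together, with their common direction at $\infty$, bound a region not containing a full half-plane, while on the other hand the inner angle $2\pi/3$ at $\zeta_+$ opens outward; I would rule this configuration out by the same reasoning used in the excerpt for the $\beta_j = 1$ convention at the origin, or simply observe it contradicts the fact that a domain bounded by two trajectories from a simple zero under angle $2\pi/3$ and closing up ``thinly'' at infinity has no consistent foliation. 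The cleanest way is: the case $0\in\Omega$ with one corner of angle $0$ at infinity forces $\Omega$ to be a strip-type domain, but a strip has inner angles summing (with $\beta$) to $0$ over \emph{all} corners including those at $\zeta_\pm$ not inside — here there are no such extra corners, and $0 + 1 + 1 \neq 0$, contradiction. Hence $0\notin\Omega$ and $\gamma_1,\gamma_2$ diverge in opposite directions.

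Finally, the ``in particular'' statement: if all three trajectories emanating from a zero (under mutual angles $2\pi/3$) diverged to $\infty$, then any two consecutive ones would, by the main part of the lemma, have to diverge in opposite directions; applying this to the pairs $(\gamma_1,\gamma_2)$ and $(\gamma_2,\gamma_3)$ forces $\gamma_1$ and $\gamma_3$ to diverge in the \emph{same} direction, but then the pair $(\gamma_1,\gamma_3)$ (bounding the domain with angle $2\pi/3$ at the zero, on the remaining side) contradicts the main part again — or, more directly, three rays cannot pairwise go to opposite directions on a line. The identical argument applies verbatim to the orthogonal trajectories, since Teichm\"uller's lemma and the local structure are symmetric under exchange of the horizontal and vertical foliations.

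I expect the main obstacle to be the rigorous justification that $\Omega$ is a genuine $\varpi_A$-polygon with controlled corners at infinity — in particular, ensuring the two divergent rays do not have further critical behaviour between $\zeta_+$ and $\infty$ and that the region they enclose is simply connected and picks up only the singularities $0$ (possibly) and $\infty$. This is exactly where one needs the absence of recurrent trajectories and the exclusion of closed loops in the $A\in\C_+$ regime, both of which are available; once the polygon is set up correctly, the rest is the bookkeeping with~\eqref{teich1} and the tabulated $\beta_j$-values sketched above.
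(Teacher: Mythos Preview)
Your overall strategy---apply Teichm\"uller's lemma to the $\varpi_A$-polygon $\Omega$ bounded by $\gamma_1\cup\gamma_2$---is the same as the paper's, and your treatment of the case $0\notin\Omega$ is correct. The problem is in your handling of the case $0\in\Omega$.

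When $0$ lies in the \emph{interior} of $\Omega$, it is not a corner of the polygon, so the convention ``$\beta_j=1$ at $z_j=0$'' does not apply: that convention, spelled out just before the lemma, is for the situation where two boundary trajectories diverge to $0$, making the origin a limit point of $\partial\Omega$. Here the boundary is $\gamma_1\cup\gamma_2$, neither of which approaches $0$, so the origin enters the formula only through the right-hand side of~\eqref{teich1}, as an interior pole with $n_i=-2$. Thus the right-hand side becomes $2+(-2)=0$, while the left-hand side is $\beta_{\zeta_+}+\beta_\infty=0+\beta_\infty\in\{1,2\}$. This is an immediate contradiction, and no further argument is needed. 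By contrast, your bookkeeping (adding $\beta_j=1$ to the left side and keeping the right side equal to $2$) yields $\beta_\infty=1$, which is a perfectly consistent value, and then you are forced into a hand-wavy discussion of ``strip-type domains'' and ``pinching'' that does not actually close the gap---indeed, your final line ``$0+1+1\neq 0$'' tacitly switches to the correct right-hand side $0$, which is inconsistent with the computation two sentences earlier.

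For the ``in particular'' clause, your pigeonhole argument on the two asymptotic directions is fine and slightly different from the paper's. The paper simply observes that the three trajectories split $\C$ into three domains, each with inner angle $2\pi/3$ at the zero; by the main part none of them can contain $0$, yet $0$ must lie in one of them.
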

\begin{proof}
Indeed, in this case the left hand side in \eqref{teich1} can take only values 1 or 2, with $2$ corresponding to the angle $\pi$ at infinity. The conclusion follows immediately from  identity \eqref{teich1}.

The last conclusion is also straightforward: if all three trajectories emanating from a zero   diverge simultaneously to $\infty$, they split the complex plain into three disjoint domains, but $0$ cannot belong to either one.
\end{proof}

Another consequence of the Teichm\"uller's lemma is the following conclusion:
\begin{lemma}\label{lem:2spirals}
If $A\notin \R$, there cannot exist two infinite critical trajectories  emanating from $\zeta_\pm$ and diverging to the origin.

In the same vein, if $\Re(A+1)= 0$, there cannot exist two   infinite critical orthogonal trajectories  emanating from $\zeta_\pm$ and diverging to the origin.
\end{lemma}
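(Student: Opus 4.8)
The plan is to argue by contradiction, with the Teichm\"uller's lemma \eqref{teich1} as the main tool, assisted by the period computation of Lemma~\ref{lemma:integrals}. Suppose that two infinite critical trajectories $\gamma_1,\gamma_2$ of $\varpi_A$ --- horizontal for the first assertion, orthogonal for the second --- emanate from $\zeta_-$ and $\zeta_+$ and both diverge to the origin. If $\gamma_1$ and $\gamma_2$ emanate from the same zero $\zeta$, then, since two trajectories of one and the same family cannot cross, $\gamma_1\cup\gamma_2$ is a Jordan curve bounding a $\varpi_A$-polygon $\Omega$ whose only corners are $\zeta$ --- where two incident sides of the same family meet, so the inner angle is $2\pi/3$ or $4\pi/3$ and $\beta_\zeta\in\{0,-1\}$ --- and $0$, where we take $\beta_0=1$ as explained right before the statement. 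Then $\sum_j\beta_j\in\{0,1\}$, while the right-hand side of \eqref{teich1} equals $2$ if the other zero lies outside $\Omega$ and $3$ if it lies inside; this contradiction rules out the configuration, with no restriction on $A$.

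It remains to exclude the configuration in which $\gamma_1$ emanates from $\zeta_-$ and $\gamma_2$ from $\zeta_+$. Then $\gamma:=\gamma_1\cup\{0\}\cup\gamma_2$ is an arc joining $\zeta_-$ and $\zeta_+$ through the origin, which after a small deformation near $0$ becomes a Jordan arc $\widehat\gamma$ in $\C\setminus\{0\}$; by Lemma~\ref{lemma:integrals}, $\int_{\widehat\gamma}\sqrt{D_A(t)}_+\,t^{-1}\,dt$ equals $2\pi i$ or $2\pi i(A+1)$. On the other hand, away from the origin $\widehat\gamma$ runs along $\gamma_1$ and along the reversal of $\gamma_2$, and on each of these the $+$-boundary branch of $\sqrt{D_A}$ coincides, up to a global sign, with the branch continued from the emanating zero; hence the real part (first assertion), resp.\ the imaginary part (second assertion), of the primitive of $\sqrt{D_A(t)}/t$ is constant and equal to $0$ along $\gamma_1$ and along $\gamma_2$, so that the real (resp.\ imaginary) part of $\int_{\widehat\gamma}\sqrt{D_A(t)}_+\,t^{-1}\,dt$ is accounted for entirely by the short sub-arc of $\widehat\gamma$ near $0$, on which the integrand behaves like $\sqrt{D_A(0)}_+\,t^{-1}$ with $\sqrt{D_A(0)}_+\in\{-A,A\}$ prescribed by \eqref{sqrt0}. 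The hypotheses on $A$ enter precisely here: one matches the constancy of the real, resp.\ imaginary, part of the primitive along the two critical trajectories against the two admissible periods $2\pi i$ and $2\pi i(A+1)$, using that $2\pi i(A+1)$ is not purely imaginary when $A\notin\R$, resp.\ is purely real exactly when $\Re(A+1)=0$, to produce the incompatibility. (An alternative, purely Teichm\"uller-based treatment closes the two incoming trajectories near $0$ by short arcs of a transversal trajectory to form a genuine $\varpi_A$-polygon, but then the inner angles at the auxiliary right-angle corners, which may be $\pi/2$ or $3\pi/2$, must be tracked carefully.)

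The main obstacle is the behaviour of the trajectories at the double pole $0$. In the log-spiral case a critical trajectory tending to $0$ winds around it infinitely many times, so the ``closing up near the origin'' underlying both arguments above has to be carried out as a limit --- insert a short transversal (orthogonal, resp.\ horizontal) trajectory arc crossing both incoming trajectories arbitrarily close to $0$ and let it shrink, which is exactly the device that legitimises taking $\beta_0=1$ in \eqref{teich1} --- and one must still check that a genuine $\varpi_A$-polygon results: that its boundary is a Jordan curve, that one controls which critical points lie inside it, and that the inner angles at the auxiliary corners are correctly identified. The argument is therefore organised according to the local picture of the relevant family at $0$ (radial or log-spiral), the log-spiral case being the delicate one.
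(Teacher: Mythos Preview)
Your approach matches the paper's: Teichm\"uller for the same-zero case, and the period computation of Lemma~\ref{lemma:integrals} combined with a transversal arc near the origin for the different-zeros case. Your treatment of the same-zero configuration via the $\beta_0=1$ convention is fine and equivalent to what the paper does with the explicit transversal cut at $P_1,P_2$.

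The genuine gap is in the different-zeros case. You deform $\gamma_1\cup\{0\}\cup\gamma_2$ to a single Jordan arc $\widehat\gamma$, correctly observe that the real part of $\int_{\widehat\gamma}\sqrt{D_A}_+\,t^{-1}\,dt$ is carried entirely by the connecting sub-arc near $0$, and then try to match against the two admissible periods $2\pi i$ and $2\pi i(A+1)$. But you never control \emph{which} homotopy class $\widehat\gamma$ falls into, and only one of the two targets is excluded by a nonzero real contribution from the sub-arc: if $\widehat\gamma\in\mathcal F_A$ the period is $2\pi i$ and you win, but if $\widehat\gamma\notin\mathcal F_A$ the period is $2\pi i(A+1)$, whose real part is the specific nonzero number $-2\pi\,\Im A$, and nothing in your argument prevents the sub-arc contribution from matching it. Noting that $2\pi i(A+1)$ is not purely imaginary is true but not what is needed. (The orthogonal-trajectory assertion has the same defect with real and imaginary parts exchanged: there it is the class $\mathcal F_A$, with period $2\pi i$ and imaginary part $2\pi$, that your single-path argument fails to exclude.)

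The paper's fix is small but decisive: take an orthogonal transversal $\sigma$ and record three \emph{consecutive} crossings $P_1\in\gamma_1$, $P_2\in\gamma_2$, $P_3\in\gamma_1$ as $\sigma$ spirals toward $0$. This yields two paths from $\zeta_-$ to $\zeta_+$, one via $P_1,P_2$ and one via $P_3,P_2$, whose difference is a loop encircling the origin exactly once; hence they lie in different homotopy classes and exactly one of them belongs to $\mathcal F_A$. For that one the period is $2\pi i$, while the nontrivial $\sigma$-arc on it contributes a nonzero real part --- contradiction. Adding this two-path step, or equivalently observing that in the log-spiral regime you may place the connecting sub-arc on either side and thereby \emph{choose} $\widehat\gamma\in\mathcal F_A$, completes your argument.
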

\begin{figure}[htb]
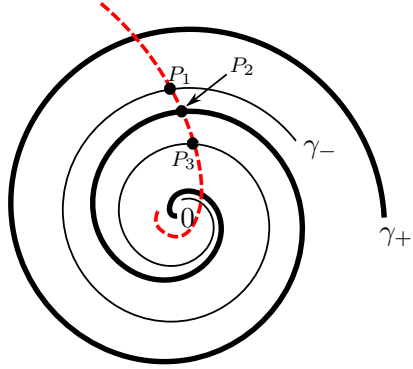

\centering
\begin{overpic}[scale=1.7]%
{spiral2}%
 \put(45.5,37.5){\small $0 $}
  \put(78,57){\small $\gamma_- $}
   \put(98,34){\small $\gamma_+ $}
      \put(42,76){\scriptsize $P_1$}
\put(58.5,78.5){\scriptsize $P_2$}
\put(43,53.5){\scriptsize $P_3$}
\end{overpic}
\caption{Local structure of two trajectories (continuous lines)  and orthogonal trajectories (discontinuous line) of $\varpi_A$ near the origin with $ A, iA\notin \R$.}
\label{fig:localstructure3}
\end{figure}

\begin{proof}
As usual,  $A\in \C_+$; consider the case $\Re (A)\neq 0$, and assume that there are two infinite critical trajectories $\gamma_\pm$, both joining a zero with the origin.

Under our assumption on $A$, the local structure of horizontal and vertical trajectories at the origin is the same. Let us denote by $\sigma$ an orthogonal trajectory diverging to the origin. It necessarily intersects both $\gamma_-$ and $\gamma_+$ infinitely many times. We denote by $P_1$ one of the intersections of $\sigma$ with, say, $\gamma_-$. Let $P_2$ be the first time the ray of $\sigma$ emanating from $P_1$ towards $0$ meets $\gamma_+$, and $P_3$ its next intersection with $\gamma_-$ (see Figure \ref{fig:localstructure3}).

Assume first that both $\gamma_\pm$ emanate from the same zero. Consider the bounded $\varpi_A$-polygon limited by the union of the arcs of $\gamma_-$ and $\gamma_+$ joining the zero of $\varpi_A$ with $P_1$ and $P_2$, respectively, and the arc of $\sigma$ joining $P_1$ and $P_2$. Clearly, for such a $\varpi_A$-polygon the right hand side of \eqref{teich1} is equal to $2$. On the other hand, we have seen that the value of $\beta_j$ at the zero can be either $0$ or $-1$, while at $P_1$ and $P_2$, $\beta_j=1/2$. Thus, formula \eqref{teich1} cannot hold.

Suppose now that $\gamma_\pm$ emanate from different zeros.
Let us consider two paths joining $\zeta_-$ and $\zeta_+$. One path is the union of the arc of $\gamma_-$ from $\zeta_-$ to $P_3$, the arc of $\sigma$ from $P_3$ to $P_2$, and the arc of $\gamma_+$ from $P_2$ to $\zeta_+$. The other one is the union of the arc of $\gamma_-$ from $\zeta_-$ to $P_1$, the arc of $\sigma$ from $P_1$ to $P_2$, and the arc of $\gamma_+$ from $P_2$ to $\zeta_+$. It is easy to see that the difference of these two paths consists of the closed curve encircling the origin (the union of the arc of $\gamma_-$ joining $P_1$ and $P_3$ and the arc of $\sigma$ joining $P_1$ and $P_3$ through $P_2$). Hence, $\gamma_-$ and $\gamma_+$ are not homotopic on $\C\setminus \{0\}$. It means that the integral in \eqref{integral1}  along both paths take different values from the right hand side in \eqref{integral1}. In particular, along one of the two paths the integral is purely imaginary, which contradicts the fact that a non-trivial portion of the path goes along the orthogonal trajectory joining $P_2$ with either $P_1$ or $P_3$. This contradiction settles the proof.

All these considerations apply to the case $\Re(A)=0$, with the simplification that now $\gamma_\pm$ are Jordan arcs, and $\sigma$ is a closed Jordan curve, encircling the origin.

Finally, the case $\Re \left( A+1\right) =0$ is analyzed in the same vein, by exchanging the roles
of trajectories and orthogonal trajectories in the last proof.

\end{proof}

Combining Lemmas \ref{lem:directions} and \ref{lem:2spirals} we obtain the following important
\begin{corollary} \label{cor:existenceTrajectories}
If $A\in \C_+$, there exist:
\begin{itemize}
\item exactly one infinite critical trajectory  joining a zero of $\varpi_A$ with the origin; it emanates from $\zeta_-$.
\item exactly one short (finite critical) trajectory connecting $\zeta_-$ and $\zeta_+$.
\end{itemize}
\end{corollary}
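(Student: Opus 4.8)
The plan is to combine the two preceding lemmas to pin down the global structure of the finitely many critical trajectories emanating from $\zeta_\pm$. Recall that with $A\in\C_+$ there are no closed loops, so the critical graph consists of at most six arcs, three issuing from each of $\zeta_-$ and $\zeta_+$ under equal angles $2\pi/3$; each such arc is either short (connecting $\zeta_-$ to $\zeta_+$) or infinite critical (diverging to $0$ or to $\infty$). First I would count the endpoints: each zero contributes three arc-ends, giving six arc-ends in total. A short trajectory uses two of these (one at $\zeta_-$, one at $\zeta_+$), an infinite critical trajectory uses one. By Lemma~\ref{lem:directions}, the three trajectories from a fixed zero cannot all diverge to $\infty$; and by Lemma~\ref{lem:2spirals} (the case $A\notin\R$), at most one of the six arcs can diverge to the origin. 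So out of the three arc-ends at $\zeta_-$ and the three at $\zeta_+$, the total number of arcs hitting the origin is at most one.

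Next I would establish that there is \emph{exactly} one arc diverging to the origin. The origin is a double pole of $\varpi_A$, and by the local structure (Figure~\ref{fig:localstructure2}) trajectories near a double pole with $\Im A\ne0$ spiral in; a standard consequence of Jenkins' Three Pole theorem (no recurrent trajectories, already invoked) is that every trajectory entering a sufficiently small punctured neighbourhood of the double pole must actually tend to it. Hence \emph{some} critical trajectory must reach the origin: one can argue, for instance, that the trajectory bounding the region containing $0$ either is short (then $0$ lies in a $\varpi_A$-polygon bounded by two critical arcs, whose $\beta$-sum must be $0$ by Teichm\"uller, forcing additional structure that again produces an arc into $0$), or is itself an infinite critical trajectory running into $0$. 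Cleanest is to appeal to the alternative: if no critical trajectory diverged to the origin, then a whole trajectory-annulus would surround $0$, but such a ring domain is incompatible with the pole orders here (Teichm\"uller applied to a ring domain around a double pole). Either way we get at least one, and combined with the ``at most one'' above, exactly one. Lemma~\ref{lem:2spirals}'s phrasing (it emanates from a zero) plus the location analysis of $\zeta_\pm$ from Lemma~\ref{lemma:locationzeros} — $\zeta_-$ is the one that for $A$ near the real axis sits near the segment $[0,1]$ and in general lies ``closer'' to the origin in the homotopy sense — identifies that zero as $\zeta_-$; more robustly, one checks that an arc from $\zeta_+$ to $0$ together with the forced remaining arcs would violate Teichm\"uller in the half-plane domain of angle $2\pi/3$.

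Then I would deduce the short trajectory. We now know: exactly one of the six arc-ends goes to $0$ (from $\zeta_-$), so $\zeta_-$ has two remaining arc-ends and $\zeta_+$ has three, none going to $0$. None of these five can all-diverge-to-$\infty$ in the forbidden triple configuration, and, crucially, an arc-end cannot just ``stop'' — it is either short or infinite. A short trajectory consumes one end at each zero; the only way to balance the parities and respect Lemma~\ref{lem:directions} (which forces the two arcs bounding the angle-$2\pi/3$ sector at $\zeta_+$, if both infinite, to diverge in \emph{opposite} directions, leaving the third $\zeta_+$-arc nowhere consistent to go unless it is short) is to have exactly one short trajectory from $\zeta_-$ to $\zeta_+$, the remaining one end at $\zeta_-$ and two ends at $\zeta_+$ being infinite critical trajectories diverging to $\infty$. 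Uniqueness of the short trajectory then follows because a second short trajectory would use up a second end at each zero, leaving only one free end at $\zeta_+$, which cannot be infinite-and-opposite-paired and cannot reach $0$ — contradiction; alternatively, two short trajectories together would enclose $0$ in a $\varpi_A$-polygon with $\beta$-sum $0$, impossible with only the two angle-$2\pi/3$ corners available (sum $0+0=0$ is actually attainable, so here the homotopy/integral computation of Lemma~\ref{lemma:integrals} is the decisive tool: the two short arcs would be non-homotopic in $\C\setminus\{0\}$, hence the period integral would take both values $2\pi i$ and $2\pi i(A+1)$, but a short trajectory forces that integral to be \emph{real}, impossible for $A\notin\R$ unless $A+1\in\R$, i.e. never in $\C_+$).

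I expect the main obstacle to be the second bullet — pinning down that there is \emph{at least} one short trajectory and that it is \emph{unique}. The ``at most one going to $0$'' is immediate from Lemma~\ref{lem:2spirals}, but ruling out configurations where \emph{all} the non-origin arcs run off to infinity (so that no short trajectory exists) requires a careful bookkeeping of how the three sectors at each zero are filled, using Lemma~\ref{lem:directions} repeatedly together with the fact that the $2\pi/3$-sector domains must be half-plane or strip domains; and the uniqueness of the short trajectory is where the period computation of Lemma~\ref{lemma:integrals} — the observation that along a short (horizontal) critical trajectory $\int t^{-1}\sqrt{D_A(t)}\,dt$ is real, contradicting both possible values $2\pi i$ and $2\pi i(A+1)$ when $\Im A>0$ — does the real work. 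Everything else is a finite case-check against Teichm\"uller's identity \eqref{teich1}.
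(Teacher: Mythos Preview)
Your overall architecture --- combine Lemma~\ref{lem:directions}, Lemma~\ref{lem:2spirals}, and the period computation of Lemma~\ref{lemma:integrals}, with Teichm\"uller as a backstop --- is the right one and matches the paper's. But two points need repair.

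First, and most concretely: your uniqueness argument for the short trajectory contains a sign slip that makes it prove the opposite of what you want. Along a \emph{horizontal} trajectory one has $\Re\int^z t^{-1}\sqrt{D_A(t)}\,dt\equiv\mathrm{const}$, so the full integral from $\zeta_-$ to $\zeta_+$ along a short trajectory is \emph{purely imaginary}, not real. This is \emph{consistent} with the value $2\pi i$ (the $\mathcal F_A$ homotopy class) and \emph{excludes} $2\pi i(A+1)$ since $\Re\bigl(2\pi i(A+1)\bigr)=-2\pi\,\Im A\neq 0$. Hence at most one homotopy class can carry a short trajectory, and since two short trajectories would lie in different classes, there is at most one. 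Your version (``the integral is real, contradicting both $2\pi i$ and $2\pi i(A+1)$'') would instead rule out \emph{any} short trajectory. The paper runs the counting in the opposite order from yours precisely to make this step clean: it first gets existence of a short trajectory from the arithmetic $6>4+1$ (at most four arcs to $\infty$ by Lemma~\ref{lem:directions}, at most one to $0$ by Lemma~\ref{lem:2spirals}, no loops), then uses the corrected period argument above for uniqueness, and only \emph{then} reads off that one of the four remaining arcs must go to the origin.

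Second, your identification of $\zeta_-$ as the zero joined to the origin is not yet an argument. The ``closer in the homotopy sense'' heuristic and the unspecified Teichm\"uller contradiction for a hypothetical $\zeta_+\to 0$ arc do not pin it down; indeed, for a single value of $A$ the Teichm\"uller bookkeeping does not by itself distinguish $\zeta_-$ from $\zeta_+$. The paper's device is different and worth noting: since by Lemma~\ref{lem:2spirals} the (unique) arc to $0$ cannot jump from one zero to the other as $A$ varies continuously in $\C_+$, it suffices to check a single convenient case. For $A>0$ the critical graph has a closed loop from $\zeta_-$ encircling $0$; perturbing to $A+i\varepsilon$ with small $\varepsilon>0$ breaks this loop, and one of the two resulting arcs from $\zeta_-$ must diverge to $0$. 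This continuity-plus-limit argument is what actually singles out $\zeta_-$.
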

\begin{proof}
Consider the three trajectories emanating from a zero, say $\zeta_-$. By Lemma \ref{lem:directions}, they cannot diverge simultaneously to $\infty$, so among them there is at least one short or one infinite critical trajectory diverging to $0$. Notice that a short trajectory that starts and ends at the same zero creates a loop, and thus is boundary of a ring domain (see \cite[Ch.~IV]{Strebel:84})  containing the origin. According to the local structure of trajectories at $z=0$, described above (see Figure \ref{fig:localstructure2}), this is impossible for $A\notin \R$.

 Since the same considerations apply to the other zero of $\varpi_A$, we conclude that among all trajectories emanating from a zero of $\varpi_A$ there is \emph{at most} 4 diverging to infinity, \emph{at most} 1 diverging to the origin, and no loops. This immediately implies the existence of a short  trajectory connecting $\zeta_-$ and $\zeta_+$.

Furthermore, according to Lemma \ref{lemma:integrals}, for $A\notin \R$,
$$
\Re \int_{\zeta_-}^{\zeta_+}   \frac{\sqrt{D_A(t)}}{t}\, dt = 0
$$
can hold only if we integrate along a path in the homotopy class $\mathcal F_A$. Since two different short trajectories cannot belong to the same homotopy class in $\C\setminus \{0\}$, we conclude that there is \emph{at most} one short trajectory. This proves that the short trajectory is exactly one, and implies existence of an infinite critical trajectory connecting a zero with the origin.

Finally, by Lemma \ref{lem:2spirals} and continuous dependence of $\varpi_A$ from $A$ it follows that the infinite critical trajectory diverging to $0$ must emanate from the same zero $\zeta_\pm(A)$ for all $A\in \C_+$. It is sufficient then to analyze the case $  A+i\varepsilon$, with $A>0$ and $ \varepsilon>0$. Recall that for $A>0$, $0<\zeta_-(A)<\zeta_+(A)$; the critical graph $\Gamma_A$  consists of the interval $[\zeta_-(A), \zeta_+(A)]$, two critical trajectories emanating from $ \zeta_+(A)$ and diverging towards $\pm i\infty$, and a closed loop emanating from $\zeta_-(A)$ and enclosing the origin.

We have seen that both $\zeta_-(A+i\varepsilon)$ and $\zeta_+(A+i\varepsilon)$ are in the upper half plane, close to their original positions $\zeta_\pm(A)$. By continuity, for small values of $\varepsilon>0$ there still are two critical trajectories emanating from $ \zeta_+(A+i\varepsilon)$ and diverging towards $\pm i\infty$. However, the closed loop can no longer exist; it breaks into two critical trajectories starting at $ \zeta_-(A+i\varepsilon)$. As we have seen, one of these trajectories must diverge to the origin.
\end{proof}

The combination of Corollary \ref{cor:existenceTrajectories} with the lemmas above yields the existence of the critical trajectories described in Theorem \ref{thm:1}, for which we will use the notation introduced there.

Consider the $\varpi_A$-polygon, bounded by the two trajectories $\sigma_{\uparrow+}$ and $\sigma_{\downarrow+}$, with the inner angle $2\pi/3$ at $\zeta_+$. By Lemma \ref{lem:directions}, this $\varpi_A$-polygon does not contain the origin, and $\sigma_{\uparrow+}$ and $\sigma_{\downarrow+}$ diverge in the opposite vertical directions (which justifies the notation).

Let us consider now a $\varpi_A$-polygon $\Omega$, bounded by   $\sigma_{-} \cup \gamma_A$ and one of the two trajectories $\sigma_{\uparrow+}$, $\sigma_{\downarrow+}$,  with the inner angle $2\pi/3$ at $\zeta_+$ (see Figure~\ref{fig:globalstructure}).
The analysis based on the Teichm\"uller's lemma above shows that $\Omega$ can be only of one of the following two types:
\begin{itemize}
\item  the inner angle of $\Omega$ at $\zeta_-$ is  $2\pi/3$, $0\notin \Omega$, and the inner angle at $\infty$ is $\pi$, or
\item    the inner angle of $\Omega$ at $\zeta_-$ is   $4\pi/3$, $0\in \Omega$, and the inner angle at $\infty$ is $0$.
\end{itemize}
In particular, both  $\varpi_A$-polygons with the inner angle $2\pi/3$ at $\zeta_+$, bounded by $\sigma_{-} \cup \gamma_A\cup \sigma_{\uparrow+}$ and by $\sigma_{-} \cup \gamma_A\cup \sigma_{\downarrow+}$, respectively, must be of different type. This leaves us with the unique configuration for $\Gamma_A$, up to complex conjugation. This configuration is determined by the asymptotic direction of $\sigma_-$ at infinity, which remains invariant for all $A\in \C_+$.
But for $A<-1$ we know that $\sigma_-$ tends to $-i\infty$ (see \cite{Kuijlaars/McLaughlin:01}), which establishes the corresponding assertion of Theorem \ref{thm:1}.

\begin{remark}\label{remark2}
Let us discuss an alternative approach to the proof of the existence of a short trajectory joining $\zeta_\pm$, which is more general and can be applied in other similar situations. We introduce the set $\mathcal A \subset \C$ defined by
$$
\mathcal A=\{A:\, \text{there exists a short trajectory $\gamma_A$ for $\varpi_A$ joining $\zeta_\pm$} \}.
$$
From \cite{Kuijlaars/McLaughlin:01, Kuijlaars/Mclaughlin:04, MR1858305} it follows that $\R\subset \mathcal A$.

We claim that $\mathcal A$ is open in $\C$. Assume that $A\in \mathcal A\setminus \R$; in particular, the integral in \eqref{integral1} taken along $\gamma_A$ in the appropriate direction is equal to $2\pi i$. By continuity of the quadratic differential $\varpi_A$, for every $\varepsilon >0$ there exists $\delta>0$ such that for any $A'\in \C$ satisfying $|A'-A|<\delta$, there exists a trajectory of $\varpi_{A'}$ emanating from $\zeta_-(A')$ and intersecting the $\varepsilon$-neighborhood $\mathcal U_\varepsilon$ of $\zeta_+(A')$; let us denote it by $\gamma_{A'}$. Obviously, the intersection of $\gamma_{A'}$ with $\mathcal U_\varepsilon$ is an arc of a horizontal trajectory of $\varpi_{A'}$ by definition.
If $\gamma_{A'}$ is not critical (i.e., if it does not intersect $\zeta_+(A')$), then by the local structure of trajectories at a simple zero, we may assume that  $\delta>0$ is small enough so that  $\gamma_{A'}$ is intersected by an orthogonal trajectory $\sigma$ emanating from $\zeta_+(A')$. But in this case the path of integration in \eqref{integral1}  that follows the arc of $\gamma_{A'}$ from $\zeta_-(A')$ to the intersection point  and then continues to $\zeta_+(A')$ along $\sigma$, cannot render a purely imaginary integral. This contradiction shows that the whole small neighborhood of $A$ is still in $\mathcal A$.

On the other hand, $\mathcal A$ is closed in $\C$. Indeed, imagine that $A_n\in \mathcal A$ converge to $A\notin \R$, so that $\zeta_\pm (A_n)\to \zeta_\pm(A)$. For each $A_n$, there exists the (unique) short trajectory $\gamma_{A_n}$ joining $\zeta_\pm(A_n)$. It is easy to see that the limit set of the sequence $\{\gamma_{A_n}\}$ (in the Hausdorff metrics) is either another short trajectory connecting $\zeta_\pm(A)$, or a union of two infinite critical trajectories, connecting each $\zeta_\pm(A)$ with the origin. But the last case is forbidden by Lemma \ref{lem:2spirals}, which concludes the proof that $\mathcal A = \C$.
\end{remark}

Let us prove finally the statement about the intersection of $\gamma_A$ with the straight segment, joining $\zeta_-$ and $\zeta_+$, which we in a slight abuse of notation denote by $[\zeta_-, \zeta_+]$.
\begin{lemma}\label{lemma:convexity}
For $A\in \C\setminus [-1, +\infty)$, 
$$
\gamma_A \cap [\zeta_-, \zeta_+]= \{\zeta_-, \zeta_+\}.
$$
\end{lemma}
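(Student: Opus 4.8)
\textbf{Plan for the proof of Lemma~\ref{lemma:convexity}.}

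The plan is to reduce the claim to a monotonicity property of $u(s):=\Re\int_{\zeta_-}^{z(s)}t^{-1}\sqrt{D_A(t)}\,dt$, the real part of the conformal variable restricted to the straight segment $z(s)=\zeta_-+s(\zeta_+-\zeta_-)$, $s\in[0,1]$. Throughout I take $A\in\C_+$; the case $A<-1$ is contained in the analysis of \cite{Kuijlaars/McLaughlin:01,MR1858305}, and the general statement of the lemma then follows by the real-symmetry of $D_A$ in the parameter. I fix the branch of $\sqrt{D_A}$ holomorphic in $\C\setminus[\zeta_-,\zeta_+]$ with $\sqrt{D_A(z)}/z\to1$ at infinity and use its boundary value $\sqrt{D_A}_+$, exactly as in Lemma~\ref{lemma:integrals}; the integral defining $u$ is taken along $[\zeta_-,\zeta_+]$ with that boundary value. (If the segment happens to pass through the double pole $z=0$ --- a real-analytic arc of parameter values --- one argues by a limiting argument from the generic case.)

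First I would record the elementary identity $D_A(z(s))=-s(1-s)(\zeta_+-\zeta_-)^2$, so that on the open segment $\sqrt{D_A(z(s))}=\pm i\sqrt{s(1-s)}\,(\zeta_+-\zeta_-)$ for one fixed sign, whence
\begin{equation*}
u'(s)=\mp\,\sqrt{s(1-s)}\;\frac{\psi(s)}{|z(s)|^{2}},\qquad
\psi(s):=\Im\!\big((\zeta_+-\zeta_-)^{2}\,\overline{z(s)}\big)=16\,\Im\!\big((A+1)\,\overline{z(s)}\big).
\end{equation*}
The crucial observation is that $\psi$ is an \emph{affine} function of the real variable $s$. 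Putting $w:=\sqrt{A+1}$, so that $w$ lies in the open first quadrant and $\zeta_\pm=(w\pm1)^{2}$, one finds $\psi(1)=32\,\Im(w)\,(|w|^{2}+\Re w)>0$ and $\psi(0)=-32\,\Im(w)\,(|w|^{2}-\Re w)$; hence $\psi$ is strictly monotone and vanishes at most once on $(0,1)$ (and is not identically $0$, since that would force $\zeta_\pm\in\R$, i.e.\ $A\in[-1,\infty)$, by Lemma~\ref{lemma:locationzeros}). Therefore $u$ is, on $[0,1]$, either strictly monotone or strictly monotone up to a single interior turning point.

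Next I would determine the endpoint data: $u(0)=0$ trivially, while Lemma~\ref{lemma:integrals} gives $u(1)=\Re\!\big(\int_{[\zeta_-,\zeta_+]}t^{-1}\sqrt{D_A}_+\,dt\big)\in\{0,\,-2\pi\Im A\}$, with the value $0$ occurring precisely when $[\zeta_-,\zeta_+]\in\mathcal F_A$. A short case analysis, combining the shape of $u$ with the signs of $u'$ near the endpoints (controlled by $\psi(0)$ and $\psi(1)$), then shows that $u(s)\ne0$ for all $s\in(0,1)$ in every case except, possibly, the one in which $u$ first increases and then decreases while $u(1)<0$. Finally, suppose toward a contradiction that $\gamma_A$ meets the open segment at $z_0=z(s_0)$. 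As $\gamma_A$ is a horizontal trajectory issuing from the zero $\zeta_-$, the trajectory function $\int t^{-1}\sqrt{D_A}$ has vanishing real part along $\gamma_A$; comparing it along the subarc of $\gamma_A$ from $\zeta_-$ to $z_0$ with its value along $[\zeta_-,z_0]$ via the residue theorem --- the only singularity that can be enclosed being the double pole $z=0$ --- gives $u(s_0)=0$ if the Jordan loop formed by these two arcs omits the origin, and $u(s_0)=\pm2\pi\Im A$ if it encircles it. In the first case $u$ vanishes somewhere in $(0,1)$, contradicting the previous paragraph (in the remaining dangerous configuration one must still exclude a second zero of $u$ past the turning point, again by the residue computation). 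In the second case the homotopy bookkeeping forces $[\zeta_-,\zeta_+]\notin\mathcal F_A$, and applying the same comparison to the subarc of $\gamma_A$ from $z_0$ to $\zeta_+$ gives $u(s_0)=u(1)=-2\pi\Im A\ne0$; together with $u(0)=0$ and the monotonicity structure of $u$, this too is impossible. Either way we reach a contradiction, which proves the lemma.

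I expect the last step to be the genuine obstacle: one must keep careful track of (i) which of the two branches of $\sqrt{D_A}$ actually appears on the segment --- the sign ``$\mp$'' above --- and (ii) the residue at the double pole $z=0$, i.e.\ whether the loop formed by a subarc of $\gamma_A$ and a sub-segment of $[\zeta_-,\zeta_+]$ winds around the origin, which is equivalent to identifying the homotopy class of the straight segment $[\zeta_-,\zeta_+]$ in $\C\setminus\{0\}$ (equivalently, deciding whether $[\zeta_-,\zeta_+]\in\mathcal F_A$). By contrast, the computational core --- the affinity of $\psi$ together with the positivity $\psi(1)>0$ --- is routine.
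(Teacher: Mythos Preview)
Your computational core is exactly the paper's: parametrize the segment by $z(s)=\zeta_-+s(\zeta_+-\zeta_-)$, use $D_A(z(s))=-r^2s(1-s)$ with $r=\zeta_+-\zeta_-$, and observe that
\[
\Re\int_{\zeta_-}^{z(x)}\frac{\sqrt{D_A(t)}}{t}\,dt
= |r|^2\int_0^x \frac{\sqrt{s(1-s)}}{|z(s)|^2}\,\ell(s)\,ds,
\]
where $\ell(s)=\Im(r)s+\Im\bigl(\tfrac{r^2}{|r|^2}\overline{\zeta_-}\bigr)$ is affine in $s$ and not identically zero for $A\notin[-1,\infty)$. Your $\psi$ is $|r|^2\ell$, and your endpoint values $\psi(0),\psi(1)$ are correct.

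The divergence is in how the affinity of $\ell$ is exploited. You cut $\sqrt{D_A}$ along the segment, so that $u'$ has a prefactor of constant sign; you are then forced to determine the actual values $u(s_0)\in\{0,\pm 2\pi\Im A\}$ at a hypothetical crossing via residue/homotopy bookkeeping, and to run a case analysis on the shape of $u$ together with the location of the segment relative to $\mathcal F_A$. You correctly flag this as the hard part, and indeed your sketch leaves it incomplete.

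The paper instead keeps the branch $R_A=\sqrt{D_A}$ cut along $\gamma_A$ (the one already fixed in the paper). With that choice the factor ``$\sqrt{s(1-s)}$'' in the integrand flips sign \emph{precisely at each crossing of the segment with $\gamma_A$} and is sign--definite in between. Since the left-hand side, being $2\Re\widetilde\phi(z(x))$, vanishes at every point $z(x)\in\gamma_A$, the integral of $(\text{sign-definite})\times\ell$ over each interval between two consecutive crossings must vanish, forcing $\ell$ to change sign on that interval. An affine function changes sign at most once, so there is at most one such interval --- that is, the only points of $[\zeta_-,\zeta_+]$ lying on $\gamma_A$ are the endpoints.

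This framing collapses your entire case analysis into a single sentence; the explicit values $u(1)$, $\psi(0)$, $\psi(1)$ and the $\mathcal F_A$-membership of the segment are never needed. Your concern about the residue at $0$ is not misplaced --- it is hidden in the identification of the segment integral with $\Re\widetilde\phi$ between consecutive crossings --- but the paper's \emph{local} formulation (between \emph{consecutive} crossings rather than from $\zeta_-$ to an arbitrary crossing) makes this a single deformation step rather than a global homotopy bookkeeping. If you rewrite your argument with the cut along $\gamma_A$ you will find the obstacle you anticipated largely evaporates.
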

\begin{proof}
Using the parametrization $t(s) = r s +\zeta_-$, $r = \zeta_+-\zeta_-=4 \sqrt{A+1}$, $s\in [0,1]$, for the segment $[\zeta_-, \zeta_+]$ we obtain that 
$$
 \int_{\zeta_-}^{z} \frac{\sqrt{D_A(t)}}{t}\, dt = \frac{r^2}{i} \int_{0}^{x} \frac{\sqrt{s(1-s)}}{|t(s)|^2} \, \overline{t(s)}\, ds, \quad z=z(x), \quad x\in [0,1],
$$
where we integrate along $[\zeta_-, \zeta_+]$ from $\zeta_-$ to a certain point $z=z(x)$. In particular,
\begin{align*}
\Re \int_{\zeta_-}^{z} \frac{\sqrt{D_A(t)}}{t}\, dt & = \int_{0}^{x} \frac{\sqrt{s(1-s)}}{|t(s)|^2} \, \Im \left(r^2   \overline{t(s)}\right)\, ds \\ 
& = |r|^2  \int_{0}^{x} \frac{\sqrt{s(1-s)}}{|t(s)|^2} \, \left(\Im (r) s + \Im \left( \frac{r^2}{|r|^2}  \overline{ \zeta_-}\right)\right)\, ds.
\end{align*}
In this expression, $\sqrt{s(1-s)}$ preserves sign as long as the segment (path of integration) does not cross $\gamma_A$. 
Thus, between any two consecutive points of intersection of $[\zeta_-,\zeta_+]$ with $\gamma_A$, function
$$
\ell(s)=\Im (r) s + \Im \left( \frac{r^2}{|r|^2}  \overline{ \zeta_-}\right) 
$$ 
must change sign. But $\ell$ is a linear function in $s$, $\ell \not \equiv 0$ for $A\in \C\setminus [-1, +\infty)$, and in consequence, it can change sign at most once on $[0,1]$. This proves the lemma.
\end{proof}

Although the orthogonal trajectories of $\varpi_A$, defined by
$$
\Im \int^{z} \frac{\sqrt{D_A(t)}}{t}\, dt \equiv \const,
$$
will not play a significant role in the asymptotic analysis of the Laguerre polynomials, their structure has an independent interest. Thus, we conclude this section with its brief description:
\begin{proposition}\label{prop:orthogonal}
Assume $A\in \C_+$ such that $\Re (A+1)\neq 0 $ and $\Re(A) \neq 0$. Then
\begin{enumerate}
\item There are exactly two infinite critical orthogonal trajectories, each joining one of the zeros $\zeta_\pm$ with the origin.
\item The other two orthogonal trajectories emanating from $\zeta_-$ (resp., $\zeta_+$) diverge to $\infty$ in the opposite horizontal directions.
\item A critical orthogonal trajectory starting at either zero in the direction to a connected component of  \/ $\C\setminus \Gamma_A$ not containing the origin among its boundary points, stays in this connected component diverging to infinity.
\end{enumerate}
\end{proposition}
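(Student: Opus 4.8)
The plan is to run the machinery of this section for the quadratic differential $-\varpi_A=\frac{D_A(z)}{z^2}\,dz^2$, whose horizontal trajectories are precisely the orthogonal trajectories of $\varpi_A$. It has the same simple zeros $\zeta_\pm$ (since $A\in\C_+$, $A\neq-1$), the same double pole at the origin and the same pole of order $4$ at $\infty$; but with $u=1/z$ one has $-\varpi_A=(u^{-4}+\mathcal O(u^{-3}))\,du^2$, so the asymptotic directions of its trajectories are the two \emph{real} directions $\pm\infty$, and --- because $A\in\C_+$ and $\Re A\neq0$ --- its local structure at the origin is of spiral type. Since $\int\sqrt{-\varpi_A}=\int\frac{\sqrt{D_A(t)}}{t}\,dt$ for the branch normalized by \eqref{asymptoticcondition}, Lemma~\ref{lemma:integrals} computes the period values for $-\varpi_A$ verbatim; Jenkins' Three Pole theorem again forbids recurrent trajectories; and Teichm\"uller's lemma \eqref{teich1}, already formulated for $\varpi_A$-polygons bounded by orthogonal trajectories, applies with the same admissible values of $\beta_j$. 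Thus Lemmas~\ref{lem:directions}--\ref{lem:2spirals} and Corollary~\ref{cor:existenceTrajectories} can be reread for $-\varpi_A$, and those parts of their proofs that use only the $\beta$-table transfer directly.

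For (1) I would first exclude a short orthogonal trajectory joining $\zeta_\pm$ and an orthogonal loop at a zero. A short orthogonal trajectory would be a Jordan arc $\gamma$ from $\zeta_-$ to $\zeta_+$ along which $\int_{\gamma}\frac{\sqrt{D_A}_+}{t}\,dt$ is real, contradicting Lemma~\ref{lemma:integrals}, which yields $2\pi i$ or $2\pi i(A+1)$, with imaginary parts $2\pi$ and $2\pi\Re(A+1)$, both nonzero under our hypotheses; an orthogonal loop at a zero would bound a ring domain whose closed trajectories would accumulate at the origin, impossible for its spiral local structure there (as in the proof of Corollary~\ref{cor:existenceTrajectories}). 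Hence each of the three orthogonal trajectories leaving $\zeta_-$ (and likewise $\zeta_+$) either diverges to $\infty$ or joins the origin; by the last assertion of Lemma~\ref{lem:directions} they cannot all diverge to $\infty$, so at least one joins the origin, while the ``same zero'' case in the proof of Lemma~\ref{lem:2spirals}, which uses only Teichm\"uller's lemma, shows that at most one does. This gives exactly one orthogonal trajectory joining each zero with the origin, hence (1). Note that, unlike in the trajectory case, the ``different zeros'' argument of Lemma~\ref{lem:2spirals} does \emph{not} carry over: there the contradiction came from a forced nonzero real part of the period, whereas here the two critical arcs contribute real increments which may cancel --- which is exactly what must happen so that two orthogonal trajectories can reach the origin at once.

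For (2), the two remaining orthogonal trajectories from $\zeta_-$ both diverge to $\infty$, and Lemma~\ref{lem:directions} (read for orthogonal trajectories, applied to the $\varpi_A$-polygon they bound with inner angle $2\pi/3$ at $\zeta_-$) forces them to diverge in opposite asymptotic directions, which for $-\varpi_A$ are $+\infty$ and $-\infty$; the same holds at $\zeta_+$. For (3), by Theorem~\ref{thm:1} the origin lies in the strip domain of $\Gamma_A$ bounded by $\sigma_-\cup\gamma_A\cup\sigma_{\downarrow+}$, while the other two components of $\C\setminus\Gamma_A$ are half-plane domains of $\varpi_A$ whose boundaries ($\sigma_-\cup\gamma_A\cup\sigma_{\uparrow+}$ and $\sigma_{\uparrow+}\cup\sigma_{\downarrow+}$) do not reach the origin; so a component $H$ of $\C\setminus\Gamma_A$ with the origin not on its boundary is one of these. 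On $H$ the map $\int^z\sqrt{\varpi_A}$ is a conformal bijection onto a half-plane carrying orthogonal trajectories of $\varpi_A$ to vertical rays, so a critical orthogonal trajectory entering $H$ stays in $H$ (it cannot cross $\partial H$, a trajectory of $\varpi_A$, nor end at $\zeta_\pm$, which would give a short or loop trajectory) and is driven toward the only pole on $\overline H$, namely $z=\infty$; hence it diverges to infinity, in a horizontal direction by the behaviour of $-\varpi_A$ at $\infty$.

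The main obstacle is the count in (1): one must carefully isolate the ``period-free'' parts of the arguments of Lemmas~\ref{lem:directions}--\ref{lem:2spirals} and Corollary~\ref{cor:existenceTrajectories} --- those surviving the passage to $-\varpi_A$ --- from the parts relying on the values in \eqref{integral1} or on homotopy, and must check that it is the spiral local structure at the origin, not a period computation, that rules out loops and ring domains there. A secondary point needing care is the bookkeeping behind (2)--(3): confirming that the origin is interior only to the strip domain of $\Gamma_A$, identifying the two half-plane domains, and applying Lemma~\ref{lem:directions} to the correct $2\pi/3$-polygon at each zero.
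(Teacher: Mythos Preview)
Your argument for parts (1) and (2) is correct and matches the paper's proof almost verbatim: exclude loops via the spiral local structure at $0$, exclude short orthogonal trajectories via the period values \eqref{integral1} (which have nonzero imaginary part under the hypotheses $\Re(A+1)\neq 0$), use Lemma~\ref{lem:directions} to get at least one orthogonal trajectory to the origin from each zero, and the ``same zero'' half of Lemma~\ref{lem:2spirals} to get at most one. Your remark that the ``different zeros'' half of Lemma~\ref{lem:2spirals} does \emph{not} transfer is both correct and apposite.

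For part (3) you take a genuinely different route. The paper argues directly from Teichm\"uller's lemma: if a critical orthogonal trajectory leaving $\zeta_\pm$ into such a component were to meet $\partial H\subset\Gamma_A$ at a regular point, the resulting $\varpi_A$-polygon would have corners at the zero ($\theta=\pi/3$, $\beta=1/2$) and at the crossing point ($\theta=\pi/2$, $\beta=1/2$), so $\sum\beta_j=1$, while the right-hand side of \eqref{teich1} is $2$ (the origin and the other zero are not inside), a contradiction. Your approach instead invokes the half-plane structure of $H$ from Theorem~\ref{thm:1}: under the natural parameter $w=\int^z\sqrt{\varpi_A}$, $H$ becomes a half-plane with horizontal boundary, trajectories become horizontal lines, and orthogonal trajectories become vertical lines; the orthogonal trajectory from $\zeta_\pm$ therefore becomes a vertical ray and escapes to $\Im w=\infty$, i.e.\ $z=\infty$. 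This is valid and arguably more conceptual, but it relies on the domain classification in Theorem~\ref{thm:1}, whereas the paper's Teichm\"uller argument is self-contained. One phrasing to tighten: your parenthetical ``it cannot cross $\partial H$, a trajectory of $\varpi_A$'' reads as if orthogonal and horizontal trajectories cannot intersect, which is of course false at regular points; the non-crossing is a consequence of the conformal-map picture (a vertical ray in a half-plane with horizontal boundary meets that boundary only at its foot), not an a priori fact about the foliations.
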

Recall that $\Gamma_A$ is the critical graph of $\varpi_A$, described in Theorem \ref{thm:1}.
\begin{proof}
With our assumption $A\in \C_+$, $A \notin  i\R$, due to the local structure of the orthogonal trajectories, we cannot have closed loops, and we can assert that there are at most $6$ critical orthogonal trajectories (finite or not), and all the remaining orthogonal trajectories diverge in both directions, being their limits either $0$ or $\infty$.

Lemma \ref{lem:directions} shows that all three orthogonal trajectories emanating from a zero $\zeta_\pm$ cannot diverge simultaneously to $\infty$,
while by Lemma \ref{lemma:integrals}, and in particular, by formula \eqref{integral1}, a short orthogonal trajectory joining $\zeta_\pm$ is not possible if $\Re(A+1)\neq 0$. Thus, there should exist at least one orthogonal trajectory connecting each zero with the origin. Arguments used in the proof of Lemma \ref{lem:2spirals} imply that  each zero cannot be connected with $0$ by more than one orthogonal trajectory. This establishes that there are exactly one such trajectory for each zero. The remaining critical trajectories must necessarily diverge to $\infty$, and the conclusion about their asymptotic directions follows from the local structure at infinity.

The conclusion in 3 is a consequence again of the Teichm\"uller's lemma: assume that such an orthogonal trajectory intersects the boundary of the connected component of $\C\setminus\Gamma_A$, forming a $\varpi_A$-polygon, not containing the origin, and with the inner angles $\pi/3$ at the zero of $\varpi_A$, and $\pi/2$ at the intersection of the horizontal and vertical trajectories. But this configuration is forbidden by formula \eqref{teich1}.
\end{proof}

Cases $A\in i\R$ or $(A+1)\in i\R$ are somewhat special.
If $A\in i\R$, the local structure of the orthogonal trajectories at the origin (see Figure~\ref{fig:localstructure2}) shows that there is at least one (and hence, only one) closed critical orthogonal trajectory connecting a zero with itself and encircling the origin. The other orthogonal trajectory emanating from the same zero diverges to $\infty$. All three orthogonal trajectories emanating from the other zero also diverge to $\infty$.

If $A\in \C_+$, $\Re(A+1)=0$, then the horizontal and vertical critical graphs of $\varpi_A$ have the same structure, see again Figure~\ref{fig:globalstructure} illustrating the three generic situations.

\section{Auxiliary constructions} \label{sec:auxiliary}

The distinguished pervasive  short trajectory $\gamma_A$ plays an essential role in the asymptotics of the Laguerre polynomials with complex varying coefficients. As we will show below, it carries (again, asymptotically) the zeros of the rescaled polynomials \eqref{sequencex}, see Figure \ref{fig:trajectoriesWithZeros} below.

Thus, for $A\in \C_+$we use for convenience the notation
\begin{equation}
\label{def_RA}
R_A(z)= \sqrt{D_A(z)}, \quad z \in \C\setminus \gamma_A,
\end{equation}
with the branch of the square root fixed by the asymptotic condition \eqref{asymptoticcondition}. We introduce two analytic functions,
\begin{equation} \label{def:phi}	
\phi(z)=\frac{1}{2} \int^{z}_{\zeta_+} \frac{R_A(t)}{t}\, dt  \quad \text{and} \quad \widetilde \phi(z)=\frac{1}{2}\int^{z}_{\zeta_-} \frac{R_A(t)}{t}\, dt.
\end{equation}
We take $\phi$ defined in the simply connected domain $\mathcal D=\C\setminus ( \sigma_- \cup \sigma_0 \cup \gamma_A )$, while $\widetilde \phi$ is defined in $\widetilde{\mathcal D}=\C\setminus (  \sigma_0 \cup \gamma_A \cup \sigma_{\uparrow+})$, see Figure~\ref{fig:critical_graph}. In this way, both functions are single-valued in their respective domains of definition, as well as in the common domain
 $$
\Omega =  \mathcal D \cap \widetilde{\mathcal D}=\C\setminus ( \sigma_- \cup \sigma_0 \cup \gamma_A \cup \sigma_{\uparrow+}),
 $$
 which consists of two simply-connected disjoint components. We take the curve $\sigma_- \cup   \gamma_A \cup \sigma_{\uparrow+}$ oriented from $-i\infty$ to $+i\infty$, and consistently, the left component of $\Omega$, not containing the origin on its boundary, is $\Omega_+$, while the other component is $\Omega_-$. By the structure of $\Gamma_A$, domain $\Omega_+$ contains the asymptotic direction $-\infty$, while $\Omega_-$, the asymptotic direction $+\infty$.

\begin{proposition}\label{prop:confmapping}
With the notations above,
\begin{equation}\label{mappings}
\begin{split}
\widetilde \phi(\Omega_+) & =  \{ z \in \C: \, \Re z <0  \}, \\
 \widetilde \phi(\Omega_-^{(1)}) & = \{ z \in \C: \, 0< \Re z < \pi \Im(A) \}, \\
 \widetilde \phi(\Omega_-^{(2)}) & = \{ z \in \C: \, \Re z > \pi \Im(A) \}.
\end{split}
\end{equation}
In consequence, $ \widetilde \phi$ establishes a bijection between $\widetilde{\mathcal D}$ and the complex plane $\C$ cut along two vertical slits, joining $0$  and $-\pi i (A+1)$ with $+i\infty$, respectively.

Analogously,
\begin{equation*}
\begin{split}
 \phi(\Omega_+) & =  \{ z \in \C: \, \Re z <0   \}, \\
  \phi(\Omega_-^{(1)}) & = \{ z \in \C: \, - \pi \Im(A)< \Re z <0  \}, \\
  \phi(\Omega_-^{(2)}) & = \{ z \in \C: \, \Re z  >0  \}.
\end{split}
\end{equation*}
In particular,
\begin{equation}\label{signsPhi}
\Re \phi (z) < 0 \text{ for } z \in \Omega_+ \cup \Omega_-^{(1)}.
\end{equation}
\end{proposition}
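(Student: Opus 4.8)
The plan is to establish the conformal mapping properties of $\widetilde\phi$ and $\phi$ by identifying, for each of the three subdomains of $\widetilde{\mathcal D}$, the image of its boundary under $\widetilde\phi$, and then invoking the half-plane/strip classification from Theorem~\ref{thm:1} together with the standard fact (see \cite[\S 10]{Strebel:84}) that the integral $\int^z R_A(t)/t\,dt$ maps a half-plane domain conformally onto a vertical half-plane and a strip domain conformally onto a vertical strip. First I would fix normalizations: since $\widetilde\phi(\zeta_-)=0$ by construction, the vertex of the half-plane $\widetilde\phi(\Omega_+)$ sits at the origin, and the boundary $\sigma_-\cup\gamma_A\cup\sigma_{\uparrow+}$ of $\Omega_+$ is a level line $\Re\widetilde\phi\equiv 0$ (being a union of horizontal trajectories through $\zeta_-$); since $\Omega_+$ contains the asymptotic direction $-\infty$, where by \eqref{residue_infty} one has $R_A(t)/t = 1 - (A+2)/t + \mathcal O(t^{-2})$, so $\widetilde\phi(z)\sim z/2 \to -\infty$, the half-plane must be the left half-plane $\Re z<0$. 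This gives the first line of \eqref{mappings}.

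Next I would compute the jumps of $\widetilde\phi$ across the slits bounding $\Omega_+$ from $\Omega_-$ in order to locate the strip $\widetilde\phi(\Omega_-^{(1)})$. The component $\Omega_-^{(1)}$ is the strip domain of Theorem~\ref{thm:1}, bounded by $\sigma_-\cup\gamma_A\cup\sigma_{\downarrow+}$ and containing the origin $z=0$; its image is a vertical strip $c_1<\Re z<c_2$. One boundary component is shared with $\Omega_+$, namely $\sigma_-\cup\gamma_A$, across which $\widetilde\phi$ jumps by a constant; the jump is computed from the residue at $z=0$. Crossing $\sigma_0$ (the trajectory from $\zeta_-$ to the origin) we pick up $2\pi i \cdot\frac12\res_{t=0}R_A(t)/t = \pi i\sqrt{D_A(0)}_{\pm}$, and using \eqref{sqrt0} (with the $\gamma_A\in\mathcal F_A$ normalization built into $R_A$), this residue equals $-A$, so the relevant real increment in $\widetilde\phi$ across the boundary shared with $\Omega_+$ is $\pi\Im(A)$ (after accounting for the factor $i$, a purely imaginary residue contributes to $\Re\widetilde\phi$). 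Hence $\widetilde\phi(\Omega_-^{(1)})=\{0<\Re z<\pi\Im A\}$, using that $0<\pi\Im A$ since $A\in\C_+$. Crossing $\sigma_{\uparrow+}$ into $\Omega_-^{(2)}$, which is again a half-plane domain with vertex at $\widetilde\phi(\zeta_+)$ and contains the asymptotic direction $+\infty$ (so $\widetilde\phi\to+\infty$ there), gives the right half-plane $\Re z>\pi\Im A$; consistency requires $\widetilde\phi(\zeta_+)-\widetilde\phi(\zeta_-) = \int_{\zeta_-}^{\zeta_+}\frac12 R_A(t)/t\,dt$, whose real part is $0$ by Lemma~\ref{lemma:integrals} applied to the $\mathcal F_A$ class (integral $2\pi i$, hence $\phi,\widetilde\phi$ differ by $\pi i$ between the zeros), consistent with the two half-planes abutting the strip of width $\pi\Im A$. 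The gluing of these three images along the three slits produces the plane cut along the two vertical rays from $0$ and from $-\pi i(A+1)$ to $+i\infty$; here $-\pi i(A+1)$ arises because $\widetilde\phi(\zeta_+) = \pi i + \widetilde\phi(\zeta_-) + (\text{contribution})$ and the full period around $0$ in the non-$\mathcal F_A$ class is $2\pi i(A+1)$ by \eqref{integral1}.

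The statement for $\phi$ follows by the same reasoning with the roles of $\zeta_+$ and $\zeta_-$ interchanged and $\widetilde{\mathcal D}$ replaced by $\mathcal D$; since $\phi$ and $\widetilde\phi$ differ by the additive constant $\widetilde\phi(\zeta_+) = \phi(\zeta_-) + \pi i(A+1)$ — more precisely, $\phi(z)=\widetilde\phi(z) - \widetilde\phi(\zeta_+)$ — one simply shifts all three image regions by $-\pi\Im A$ (the real part of $\widetilde\phi(\zeta_+)$), yielding $\phi(\Omega_+)=\{\Re z<0\}$, $\phi(\Omega_-^{(1)})=\{-\pi\Im A<\Re z<0\}$, $\phi(\Omega_-^{(2)})=\{\Re z>0\}$. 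The final claim \eqref{signsPhi}, that $\Re\phi<0$ on $\Omega_+\cup\Omega_-^{(1)}$, is then immediate from the first and second displayed image formulas for $\phi$.

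I expect the main obstacle to be the bookkeeping of the additive constants (jumps of the multivalued integrand across the various slits) and the correct sign of the real increments: one must carefully track which boundary arc of $\Omega_-^{(1)}$ is shared with $\Omega_+$ versus $\Omega_-^{(2)}$, orient everything consistently with the convention that $\sigma_-\cup\gamma_A\cup\sigma_{\uparrow+}$ runs from $-i\infty$ to $+i\infty$, and verify that the residue computations at $0$ and $\infty$ produce exactly $\pi\Im A$ and not its negative. The topological input — that each $\Omega_\pm$ component really is of half-plane or strip type with the stated vertices and angles — is already supplied by Theorem~\ref{thm:1}, so no new trajectory analysis is needed; the work is entirely in turning that qualitative picture into the precise coordinates via the residue calculus of Lemma~\ref{lemma:integrals}.
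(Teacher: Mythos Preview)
Your approach is essentially the same as the paper's: identify the boundary values of $\widetilde\phi$ on each component using the residue calculus of Lemma~\ref{lemma:integrals}, invoke the half-plane/strip classification of Theorem~\ref{thm:1}, and then pass from $\widetilde\phi$ to $\phi$ by an additive constant. The paper computes the key numbers exactly as you outline: $\widetilde\phi(\zeta_+)=\pi i$ from the $\Omega_+$ side (via \eqref{integral1}), and the residue at the origin gives the limit $-\pi i A$ on the far side of $\sigma_0$, whose real part $\pi\Im(A)$ fixes the right edge of the strip.

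There is one genuine slip in your last paragraph. You write $\phi(z)=\widetilde\phi(z)-\widetilde\phi(\zeta_+)$ and then say ``one simply shifts all three image regions by $-\pi\Im A$''. But $\widetilde\phi(\zeta_+)$ has \emph{different} boundary values from the two sides: $\pi i$ from $\Omega_+$ and $-\pi i(A+1)$ from $\Omega_-$. Consequently the connection formula is
\[
\phi(z)=\begin{cases}\widetilde\phi(z)-\pi i, & z\in\Omega_+,\\ \widetilde\phi(z)+\pi i(1+A), & z\in\Omega_-,\end{cases}
\]
so the real-part shift is $0$ on $\Omega_+$ and $-\pi\Im(A)$ on $\Omega_-$. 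A single shift of $-\pi\Im A$ applied to $\widetilde\phi(\Omega_+)=\{\Re z<0\}$ would give $\{\Re z<-\pi\Im A\}$, not $\{\Re z<0\}$. Your stated images for $\phi$ are correct, but the justification as written does not produce them; you need the two-branch connection formula above. This is precisely the bookkeeping hazard you anticipated.
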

\begin{proof}
To begin with, consider the conformal mapping $w=\widetilde \phi(z)$ of  the domain $\Omega$. Notice that the critical graph $\Gamma_A$ splits the domain $\Omega$ into three simply connected subdomains: one is $\Omega_+$, and the other two are $\Omega_-^{(1)}$ and $\Omega_-^{(2)}$, such that $\Omega_-^{(2)}$ is bounded only by $ \sigma_{\downarrow+}$ and $ \sigma_{\uparrow+}$, see Figure~\ref{fig:critical_graph}.

\begin{figure}[htb]
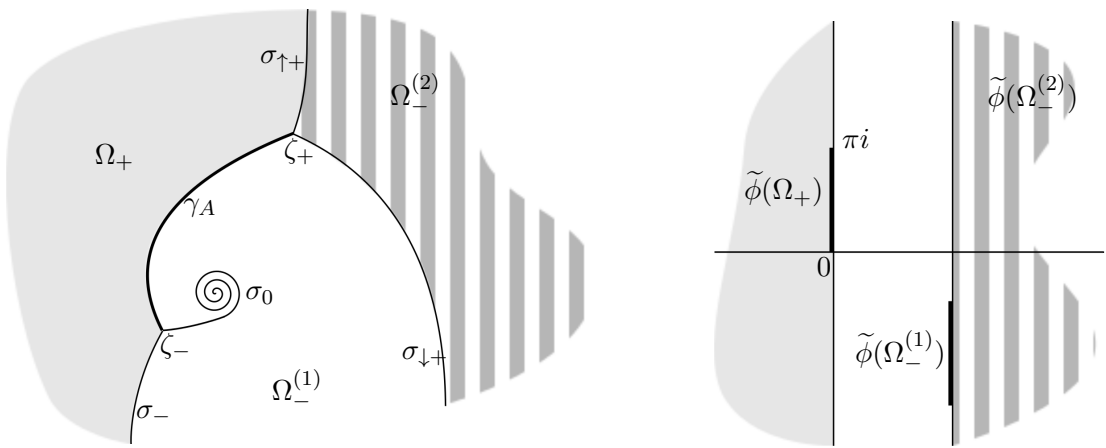

\centering
\begin{tabular}{ll}
\mbox{\begin{overpic}[scale=0.7]%
{critical_graph}%
       \put(47,49){\small $ \zeta_+ $}
       \put(15,48){\small $ \Omega_+ $}
       \put(45,8){\small $ \Omega_-^{(1)} $}
       \put(65,58){\small $ \Omega_-^{(2)} $}
       \put(26,16){\small $ \zeta_-$}
        \put(30,40){\small $ \gamma_A$}
        \put(22,5){\small $ \sigma_{-}$}
         \put(40.5,25){\small $ \sigma_{0}$}
        \put(67,15){\small $ \sigma_{\downarrow+}$}
        \put(43,65){\small $ \sigma_{\uparrow+}$}
\end{overpic}} & 
\hspace{1.0cm}
\mbox{\begin{overpic}[scale=0.7]%
{critical_graph2}%
       \put(7,58){\small $ \widetilde \phi(\Omega_+) $}
       \put(30,70){\small $ \pi i$}
       \put(24,40){\small $ 0$}
       \put(33,20){\small $ \widetilde \phi(\Omega_-^{(1)}) $}
       \put(64,80){\small $ \widetilde \phi(\Omega_-^{(2)}) $}
       \end{overpic}}
\end{tabular}
\caption{Domains of $\Omega \setminus \Gamma_A$ (left), and their images by the conformal mapping $\widetilde \phi$. }
\label{fig:critical_graph}
\end{figure}

The boundary of $\Omega_+$ consists of critical trajectories only, so that using the definition of $\widetilde \phi$ it is straightforward to see that $\widetilde \phi(\Omega_+)$ is the left half plane. Moreover, by \eqref{integral1},
\begin{equation} \label{limit_from_the_left}	
\lim_{ \Omega_+ \ni  z\to \zeta_+   } \widetilde \phi(z) = \pi i ,
\end{equation}
and the $+$ side of the short trajectory $\gamma_A$ is mapped onto the vertical segment $[0, \pi i]$.

On the other hand, the ``$-$'' side of $\sigma_-$ and the ``$+$'' side of $\sigma_0$ (oriented from the origin to $\zeta_-$) are mapped onto the imaginary axis. There are two asymptotic directions to $\zeta_-$ from $\Omega_-^{(1)}$; for one of them (from the ``$+$'' side of $\sigma_0$) by definition
$$
\lim_{    z\to \zeta_-   } \widetilde \phi(z) = 0.
$$
For the other direction, from the ``$-$'' side of $\sigma_0$,
\begin{equation}
\label{otherside}
\lim_{    z\to \zeta_-   } \widetilde \phi(z) = \frac{1}{2} \oint \frac{R_A(t)}{t}\, dt =\pi i  \res_{t=0} \frac{ R_A(t) }{t} =\pi i R_A(0) = -\pi i A,
\end{equation}
where the contour of integration encircles the origin in the anti-clockwise direction, and where we have used \eqref{sqrt0}. Observe that
$$
\Re(-\pi i A) = \pi \Im(A) >0
$$
for $A\in \C_+$. Thus, the boundary $(\sigma_0)_- \cup (\gamma_A)_- \cup \sigma_{\downarrow +}$ is mapped onto the vertical line in the right half plane, passing through $\pi \Im(A)$. By  \eqref{integral1},
\begin{equation}
 \label{limit_from_the_right}
\lim_{ \Omega_-^{(1)} \ni  z\to \zeta_+   } \widetilde \phi(z) = -\pi i (A+1),
\end{equation}
so that $(\gamma_A)_-$ corresponds by $\widetilde \phi(z)$ to the vertical segment $[-\pi i (A+1), -\pi i A]$. This concludes the proof of \eqref{mappings}.

The corresponding results for function $\phi$ are established using the following connection formula:
\begin{equation*} 
    \phi(z) =  \left\{ \begin{array}{ll}
        \widetilde{\phi}(z) - \pi i, & \quad \text{for } z \in \Omega_+, \\[10pt]
        \widetilde{\phi}(z) + \pi i(1+A), & \quad \text{for } z \in \Omega_-,
    \end{array} \right.
\end{equation*}
which is a direct consequence of formulas \eqref{limit_from_the_left}, \eqref{limit_from_the_right} and the definition of $\phi$:
$$
\lim_{ \Omega_+ \ni  z\to \zeta_+   } \widetilde \phi(z) = \pi i , \quad \lim_{ \Omega_- \ni  z\to \zeta_+   } \widetilde \phi(z) = -\pi i (A+1), \quad  \lim_{ \Omega_\pm \ni  z\to \zeta_+   }  \phi(z) = 0.
$$

\end{proof}

Observe also that with Proposition \ref{prop:confmapping} we conclude  the proof of Theorem \ref{thm:1}. Our main result of the first part of this section is the following
\begin{corollary}\label{prop:progressive}
There exist two analytic curves, $\Sigma_+$ and $\Sigma_-$, in $\Omega_-\setminus \R_+$, emanating from $\zeta_+$ and $\zeta_-$, respectively, and diverging to infinity in the asymptotic direction $+\infty$, and such that
$$
\Re \phi(z)>0 \text{ for } z \in \Sigma_-, \quad \text{and}\quad \Re \widetilde \phi(z)>0 \text{ for } z \in \Sigma_+.
$$
\end{corollary}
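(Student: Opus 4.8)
The plan is to exploit the conformal mapping picture of Proposition~\ref{prop:confmapping}. We know that $\widetilde\phi$ maps $\Omega_-^{(2)}$ conformally onto the half-plane $\{\Re z>\pi\Im(A)\}$, with $\zeta_+$ corresponding to the boundary point $-\pi i(A+1)$ (from the side of $(\gamma_A)_-$), and with the two trajectories $\sigma_{\uparrow+}$ and $\sigma_{\downarrow+}$ bounding $\Omega_-^{(2)}$ mapped to the vertical line $\Re z=\pi\Im(A)$. Since at a simple zero of $\varpi_A$ the three emanating trajectories are separated by orthogonal trajectories, there is an orthogonal trajectory $\Sigma_+$ of $\varpi_A$ emanating from $\zeta_+$ \emph{into} $\Omega_-^{(2)}$. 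Along an orthogonal trajectory $\Im\widetilde\phi\equiv\const$, so its $\widetilde\phi$-image is a horizontal ray starting at $-\pi i(A+1)$ and entering $\{\Re z>\pi\Im(A)\}$; hence this ray is $\{-\pi i(A+1)+t:\ t\ge 0\}$, traversed with $\Re\widetilde\phi$ strictly increasing from $\Re(-\pi i(A+1))=\pi\Im(A)$. Thus $\Re\widetilde\phi>0$ on $\Sigma_+\setminus\{\zeta_+\}$, and since the image ray goes to $+\infty$, the curve $\Sigma_+$ (as a full orthogonal trajectory, which cannot be short by Lemma~\ref{lem:2spirals} and cannot return to the origin since its image stays in $\{\Re z>\pi\Im(A)>0\}$ away from $\widetilde\phi(\sigma_0)$) diverges to infinity; by the description of the asymptotic directions at $\infty$ together with the fact that $\Omega_-$ contains the asymptotic direction $+\infty$ and $\Sigma_+\subset\Omega_-^{(2)}$, it diverges in the direction $+\infty$.

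For $\Sigma_-$ the argument is symmetric but uses $\phi$ in place of $\widetilde\phi$. By the second batch of displayed formulas in Proposition~\ref{prop:confmapping}, $\phi$ maps $\Omega_-^{(2)}$ onto $\{\Re z>0\}$, and the connection formula $\phi=\widetilde\phi+\pi i(1+A)$ on $\Omega_-$ shows that $\zeta_-$, which from the $(\sigma_0)_-$ side has $\widetilde\phi$-value $-\pi i A$, corresponds under $\phi$ to $\phi(\zeta_-)=-\pi i A+\pi i(1+A)=\pi i$ — a boundary point on the line $\Re z=0$. An orthogonal trajectory $\Sigma_-$ of $\varpi_A$ emanating from $\zeta_-$ into $\Omega_-^{(2)}$ has $\Im\phi\equiv\const=\pi$ along it, so its $\phi$-image is the horizontal ray $\{t+\pi i:\ t\ge 0\}$ entering $\{\Re z>0\}$, with $\Re\phi$ increasing from $0$; hence $\Re\phi>0$ on $\Sigma_-\setminus\{\zeta_-\}$. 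As before this curve diverges to infinity in the direction $+\infty$, inside $\Omega_-^{(2)}\subset\Omega_-$.

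It remains to confirm that $\Sigma_\pm$ avoid $\R_+$. The key point is that $\gamma_A$ separates $\R_+$ from $\Omega_-^{(2)}$ within $\Omega_-$: by Theorem~\ref{thm:1} the strip domain bounded by $\sigma_-\cup\gamma_A\cup\sigma_{\downarrow+}$ contains the origin, hence contains a neighborhood of $\R_+$ near $0$, while $\Omega_-^{(2)}$ is the component bounded only by $\sigma_{\uparrow+}\cup\sigma_{\downarrow+}$; and since $\R_+$ cannot cross any critical trajectory except at a zero (one checks $\zeta_\pm\notin\R_+$ for $A\in\C_+$ from Lemma~\ref{lemma:locationzeros}, using $A\notin[-1,+\infty)$), the whole of $\R_+$ lies in the closure of the strip domain, away from $\Omega_-^{(2)}$. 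Therefore $\Sigma_\pm\subset\Omega_-^{(2)}\subset\Omega_-\setminus\R_+$, as required.

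\textbf{Main obstacle.} The subtle point is not the sign assertions — those are immediate from the conformal pictures — but the global claim that $\Sigma_+$ and $\Sigma_-$ are genuinely \emph{infinite} orthogonal trajectories escaping to $+\infty$, rather than short orthogonal trajectories joining $\zeta_-$ and $\zeta_+$, or orthogonal trajectories falling back to the origin. This is exactly where Proposition~\ref{prop:confmapping} does the work: the $\widetilde\phi$- and $\phi$-images of $\Omega_-^{(2)}$ are genuine half-planes not touching the images of $\sigma_0$ (which lie on the line $\Re w=\pi\Im A$ for $\widetilde\phi$, resp. $\Re w=0$ for $\phi$, as part of the \emph{boundary} of $\Omega_-^{(2)}$, not its interior), so a horizontal image-ray starting from the boundary point $\widetilde\phi(\zeta_+)$ (resp. $\phi(\zeta_-)$) and moving into the open half-plane can only run off to $\Re w=+\infty$, which corresponds precisely to divergence of the preimage curve to $z=\infty$ in the asymptotic direction $+\infty$. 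One should also note that there is a genuine choice of which of the two orthogonal trajectories emanating from $\zeta_+$ into $\Omega_-$ to call $\Sigma_+$: it is the one entering $\Omega_-^{(2)}$ (equivalently, the one on the $\widetilde\phi$-image side corresponding to increasing $\Re\widetilde\phi$); the analogous remark applies to $\Sigma_-$.
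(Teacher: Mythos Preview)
Your argument for $\Sigma_+$ is fine and matches the paper's hint: the critical orthogonal trajectory from $\zeta_+$ into $\Omega_-^{(2)}$ does the job, since $\widetilde\phi(\Omega_-^{(2)})=\{\Re z>\pi\Im A\}$.

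The argument for $\Sigma_-$, however, has a genuine gap. You write ``an orthogonal trajectory $\Sigma_-$ of $\varpi_A$ emanating from $\zeta_-$ into $\Omega_-^{(2)}$'', but this object does not exist: $\zeta_-$ is \emph{not} on the boundary of $\Omega_-^{(2)}$. By definition $\Omega_-^{(2)}$ is bounded only by $\sigma_{\uparrow+}\cup\sigma_{\downarrow+}$, both of which emanate from $\zeta_+$. The boundary value $\phi(\zeta_-)=\pi i$ you compute from the $(\sigma_0)_-$ side is a boundary value taken \emph{from within $\Omega_-^{(1)}$}; under the bijection $\phi:\Omega_-^{(2)}\to\{\Re z>0\}$ the boundary line $\Re z=0$ corresponds to $\sigma_{\uparrow+}\cup\sigma_{\downarrow+}$, not to $(\sigma_0)_-$. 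Any curve leaving $\zeta_-$ into $\Omega_-$ must first traverse $\Omega_-^{(1)}$, and Proposition~\ref{prop:confmapping} (see \eqref{signsPhi}) gives $\Re\phi<0$ throughout $\Omega_-^{(1)}$. So the inequality $\Re\phi>0$ cannot hold on the initial arc of $\Sigma_-$, and the ``symmetric'' argument breaks down.

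What \emph{is} true, and what the Riemann--Hilbert analysis actually uses (compare \eqref{eq45}--\eqref{eq46}), is that $\Re\widetilde\phi>0$ on all of $\Omega_-^{(1)}\cup\Omega_-^{(2)}$, so any curve from $\zeta_-$ to $+\infty$ inside $\Omega_-$ has $\Re\widetilde\phi>0$; and that $\Re\phi>0$ on $\Omega_-^{(2)}$, which is the relevant inequality for the curve emanating from $\zeta_+$. The displayed inequalities in the Corollary have $\phi$ and $\widetilde\phi$ interchanged; you have inherited this swap and then tried to prove the (unattainable) stronger statement for $\Sigma_-$.

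A smaller point: your claim that ``$\R_+$ cannot cross any critical trajectory except at a zero'' is unjustified --- trajectories are analytic arcs and may meet $\R_+$ transversally at regular points (e.g.\ $\sigma_{\downarrow+}$, going from $\zeta_+\in\C_+$ to $-i\infty$, must cross $\R$). The avoidance of $\R_+$ should instead be arranged by a local deformation of $\Sigma_\pm$, which is harmless because the sign conditions $\Re\widetilde\phi>0$ and $\Re\phi>0$ are open.
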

From the considerations at the end of Section~\ref{sec:trajectories} it follows that one instance of $\Sigma_+$ could be the orthogonal trajectory of $\varpi_A$ emanating from $\zeta_+$ and diverging in the $+\infty$ direction, see Figure~\ref{fig:globalstructure} or Figure~\ref{fig:trajectoriesWithZeros} below. However, this fact is not so relevant; what really matters for our further analysis is the existence of both curves stated in this Corollary.

In what follows we will make use of the contour
\begin{equation} \label{def:SigmaA}	
\Sigma_A = \Sigma_- \cup \gamma_A \cup \Sigma_+,
\end{equation}
oriented clockwise, in such a way that $\Sigma_A$ is not homotopic to a point in $\C\setminus \R_+$, and the origin remains on the right of the curve. Observe that $\Sigma_A$ is not uniquely determined due to the freedom in the choice of $\Sigma_\pm$, although $\gamma_A$ is. 

\medskip

We turn now to the second goal of this section. The main tools for the study of the weak asymptotic behavior of
polynomials satisfying a non-hermitian orthogonality were
developed in the seminal works of Stahl \cite{Stahl:86} and Gonchar
and Rakhmanov \cite{Gonchar:87}. They showed that when the complex
analytic weight function depends on the degree of the polynomial, the
limit zero distribution is characterized by an equilibrium problem on
a compact set in the presence of an external field; this compact set
must satisfy a symmetry property with respect to the external field.

For any positive Borel measure $\mu$ on $\C$, such that $$ \int _{| z
| \geq 1} \log | z | \, d\mu(z) < +\infty\,, $$ we can define its
logarithmic potential
$$
V^\mu(z)=-\int \log|t-z| \, d\mu(t)\,.
$$
Given $A$, $\Im(A)>0$, consider  the harmonic \emph{external field}
$$ \psi(z)= -  \frac{\Re(A)}{2} \log |z|+ \frac{\Im(A)}{2}\arg(z)+\frac{\Re(z)}{2}
  $$
in $\C\setminus \R_+$, where we take the main branches of $\log$ and $\arg$.  Let also the contour $\Sigma_A$ be as defined in \eqref{def:SigmaA}.

It is known that the (unique) probability \emph{equilibrium measure} $\mu$
on  $\Sigma_A$  in the external field $\psi$ can be characterized by the property
\begin{equation}\label{equilibrium}
V^\mu(z)+\psi(z)
  \begin{cases}
    = \ell =\const, & \text{for $z \in \supp(\mu)$}, \\
    \geq \ell & \text{for $z \in \Sigma_A$},
  \end{cases}
\end{equation}
 where $\ell $ is the  equilibrium constant; for details see
e.g.\ \cite{Gonchar:87} or \cite{Saff:97}.

Both the measure $\mu$ and its support $\supp(\mu)$ have the \emph{$S$-property}
in the external field $\psi$  if at any interior point $\zeta$ of $\supp(\mu)$,
\begin{equation}\label{simetria}
  \frac{\partial (V^\mu+\psi)}{\partial n_-}\,(\zeta) =
  \frac{\partial (V^\mu+\psi)}{\partial n_+}\,(\zeta)\,,
\end{equation}
where $n_-=-n_+$ are the normals to $\supp(\mu)$.

\begin{proposition}\label{prop:eq_measure}
The absolutely continuous measure
\begin{equation}
\label{eq:defMu}
d\mu(z) = d\mu_A(z)=\frac{R_A(z)_+}{2 \pi i z}\,dz, \quad z \in \gamma_A
\end{equation}
is the equilibrium measure on $\Sigma_A$ in the external field $\psi$.
\end{proposition}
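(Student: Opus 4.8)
The plan is to verify directly that the measure $d\mu_A$ defined by \eqref{eq:defMu} is a probability measure supported on $\gamma_A$, and that its logarithmic potential plus the external field $\psi$ satisfies the variational conditions \eqref{equilibrium} on the contour $\Sigma_A$; by the uniqueness of the equilibrium measure this identifies $\mu_A$ as the equilibrium measure. The key object is the function $\phi$ (or $\widetilde\phi$) from \eqref{def:phi}, whose real part we already control completely thanks to Proposition~\ref{prop:confmapping}. The heart of the matter is the identity relating $\phi$ to $V^\mu+\psi$.

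First I would check that $\mu_A$ is a probability measure. Since $R_A(t)_+/(2\pi i t)$ is, up to the factor $1/(\pi i)$ appearing in Lemma~\ref{lemma:integrals}, precisely the integrand there with $\gamma=\gamma_A\in\mathcal F_A$ (note $\gamma_A$ is homotopic to an arc in $\C\setminus\R_+$ by Theorem~\ref{thm:1}), formula \eqref{integral1} gives $\int_{\gamma_A} d\mu_A = \frac{1}{2\pi i}\cdot 2\pi i = 1$. Positivity of the density on $\gamma_A$ follows because $\gamma_A$ is a horizontal trajectory of $\varpi_A=-\frac{D(z)}{z^2}dz^2$: along a trajectory $\frac{D(t)}{t^2}dt^2<0$, so $\frac{R_A(t)_+}{t}dt$ is (a fixed choice of) real and of constant sign, which after the factor $1/(2\pi i)$ turns into a positive real density with respect to arclength once the orientation is chosen correctly.

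Next, the main computation. I would introduce $g(z)=\int_{\gamma_A}\log(z-t)\,d\mu_A(t) = -V^\mu(z)$, analytic in $\C\setminus(\gamma_A\cup\R_+)$ (the $\R_+$ cut coming from the branch of $\log$, handled consistently with $\psi$), and show that $g'(z) = \frac{R_A(z)}{2z}$ off the cuts. Indeed, differentiating under the integral, $g'(z)=\int_{\gamma_A}\frac{d\mu_A(t)}{z-t} = \frac{1}{2\pi i}\int_{\gamma_A}\frac{R_A(t)_+}{t(z-t)}\,dt$, and a residue/contour-deformation argument — collapsing the contour around $\gamma_A$ and picking up residues at $t=0$, $t=z$, $t=\infty$, exactly as in the proof of Lemma~\ref{lemma:integrals}, using \eqref{sqrt0} and \eqref{residue_infty} — yields $g'(z)=\frac{1}{2z}\bigl(R_A(z) + \text{(rational terms)}\bigr)$, where the rational terms are designed to match $\psi'$. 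Concretely, one checks $2\phi'(z) = \frac{R_A(z)}{z} = -2V^{\mu\prime}(z) - 2\psi^{\ast}(z)$ for a suitable holomorphic primitive, so that $\Re\phi(z) = -(V^\mu(z)+\psi(z)) + \ell$ on each component, the constant being adjusted using the normalization $\phi(\zeta_+)=0$. Then \eqref{signsPhi} of Proposition~\ref{prop:confmapping}, i.e. $\Re\phi<0$ on $\Omega_+\cup\Omega_-^{(1)}$, translates into $V^\mu+\psi\ge\ell$ there, and the fact that $\gamma_A=\supp(\mu_A)$ is a trajectory (where $\Re\phi\equiv\const$) gives equality on the support; combined with the behaviour of $\Re\phi$ along $\Sigma_\pm$ from Corollary~\ref{prop:progressive} (where $\Re\phi>0$, resp. $\Re\widetilde\phi>0$, hence $V^\mu+\psi<\ell$ is \emph{not} what we want — so here one uses instead that $\Sigma_\pm\subset\Omega_-$ and the inequality must be read off the correct branch, namely $V^\mu+\psi\ge\ell$ holds on all of $\Sigma_A$ because the complement of $\supp(\mu)$ in $\Sigma_A$ lies in the region where $\Re\phi\le 0$ after accounting for the connection formula). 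The $S$-property \eqref{simetria} is then automatic: the two one-sided normal derivatives of $V^\mu+\psi$ on $\gamma_A$ are $\mp\Im(R_A(\zeta)_\pm/\zeta)$ up to sign, and these are equal because $R_A(\zeta)_+=-R_A(\zeta)_-$ on $\gamma_A$ while the tangential derivative vanishes (trajectory condition); alternatively, the $S$-property is a known consequence of $\supp(\mu)$ being a union of trajectories of the quadratic differential $\psi$-associated to the problem.

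The main obstacle I expect is bookkeeping the branch cuts and signs: $V^\mu$ has a cut along $\gamma_A$, $\psi$ has a cut along $\R_+$, and the connection formula in Proposition~\ref{prop:confmapping} between $\phi$ and $\widetilde\phi$ shifts by $\pi i$ or $\pi i(1+A)$ across the components of $\Omega$; getting the real parts and the single equilibrium constant $\ell$ to line up consistently on $\Omega_+$, $\Omega_-^{(1)}$ and $\Omega_-^{(2)}$, and across $\Sigma_\pm$, is where care is needed. The analytic identity $g'(z)=R_A(z)/(2z)$ plus rational corrections, however, is routine once the residue computation from Lemma~\ref{lemma:integrals} is reused, and uniqueness of the equilibrium measure then closes the argument.
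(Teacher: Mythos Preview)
Your approach is exactly the paper's: verify $\mu_A$ is a unit measure via \eqref{integral1}, compute the Cauchy transform of $\mu_A$ by residues (picking up contributions at $0$, $z$, and $\infty$, using \eqref{sqrt0} and \eqref{residue_infty}), integrate to identify $V^\mu+\psi$ with $\Re\phi$ up to an additive constant, then read off \eqref{equilibrium} from the sign of $\Re\phi$ and deduce the $S$-property from $R_{A,+}=-R_{A,-}$ on $\gamma_A$.

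Where your write-up goes astray is the sign of the key identity and the region on which you test it. The residue computation gives $\int\frac{d\mu(t)}{t-z}=\tfrac12\bigl(\tfrac{A}{z}-1+\tfrac{R_A(z)}{z}\bigr)$, so after integrating and adding $\psi(z)=\tfrac12\Re(-A\log z+z)$ one obtains
\[
V^\mu(z)+\psi(z)=\ell+\Re\phi(z)
\]
with a \emph{positive} coefficient on $\Re\phi$, not the minus sign you wrote. With the correct sign there is nothing to reconcile: on $\gamma_A$ one has $\Re\phi\equiv 0$ (trajectory through $\zeta_+$), while on $\Sigma_\pm$ one has $\Re\phi>0$ directly from Corollary~\ref{prop:progressive} (on $\Sigma_+\subset\Omega_-^{(2)}$ this is also $\phi(\Omega_-^{(2)})=\{\Re z>0\}$ from Proposition~\ref{prop:confmapping}), giving $V^\mu+\psi\ge\ell$ on all of $\Sigma_A$. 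You never need the inequality on $\Omega_+\cup\Omega_-^{(1)}$ at all: the variational condition \eqref{equilibrium} is stated only on the contour $\Sigma_A$, not on ambient domains, so that part of your argument (and the attempted rescue via the connection formula) should simply be deleted. Once the sign is fixed, the bookkeeping you worried about in the final paragraph disappears.
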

Function $R_A$ was defined in \eqref{def_RA}. Recall that as usual, $\gamma_A$ is oriented from $\zeta_-$ to $\zeta_+$, and $R_A(z)_+$ is the boundary value on its left side.
\begin{proof}
By the definition of $\gamma_A$, the right hand side in \eqref{eq:defMu} is real-valued and non-vanishing along $\gamma_A$. Taking into account \eqref{integral1} we conclude that $\mu$ is a probability measure on $\gamma_A$. Furthermore, as in the proof of Lemma~\ref{lemma:integrals} and using \eqref{residue_infty}, for $z\notin \gamma_A$,
\begin{align*}
\int \frac{d\mu(t)}{t-z} & =\frac{1}{4\pi i} \oint_{\gamma_A} \frac{R_A(t)}{t(t-z)}\, dt= \frac{1}{2} \left( \res_{t=0} \frac{R_A(t)}{t (t-z)}+ \res_{t=\infty} \frac{R_A(t)}{t (t-z)}+ \res_{t=z} \frac{R_A(t)}{t (t-z)} \right) \\
&  =  \frac{1}{2} \left(  \frac{A}{ z}-1+ \frac{R_A(z)}{z} \right) .
\end{align*}
Thus,
\begin{equation*} 
V^{\mu}(z) = \const + \frac{1}{2} \Re \int_{\zeta_+}^z \left(  \frac{A}{ t}-1+ \frac{R_A(t)}{t} \right)dt, \quad z \in \C\setminus \gamma_A.
\end{equation*}
Thus, for $z \in \C\setminus (\gamma_A\cup \sigma_- \cup \sigma_0\cup \R_+)$,
\begin{equation} \label{identityV}	
V^{\mu}(z) + \psi(z)=V^{\mu}(z) + \frac{1}{2} \Re \left( - A \log(z) + z  \right) =  \frac{1}{2} \left(-\ell + \Re \phi(z)\right),
\end{equation}
where $\phi$ was defined in \eqref{def:phi}, and $\ell$ is an appropriately chosen real constant. It remains to use that the last term in the right hand side vanishes on $\gamma_A$ and is strictly positive on $\Sigma_\pm$ (see Corollary~\ref{prop:progressive}) to conclude that for $\mu$ in \eqref{eq:defMu} characterization \eqref{equilibrium} holds.

Finally, since $V^{\mu}(z) + \psi(z) - \ell$, harmonic in $\C\setminus (\gamma_A\cup \R_+)$, is identically $0$ on $\gamma$ and is the real part of the analytic function
$$
\mathcal W(z)=\frac{1}{2}  \int_{\zeta_-}^z   \frac{R_A(t)}{t}  dt,
$$
satisfying $\mathcal W'_-(z)=-\mathcal W'_+(z)$ on $\gamma_A$, the symmetry property \eqref{simetria} easily follows from the Cauchy--Riemann equations.
\end{proof}

In the next section a relevant role will be played by the so-called $g$-function, i.e.~the complex potential of the equilibrium measure $\mu_A $ on $\gamma_A$:
\begin{equation} \label{gfunction}
    g(z)=g(z,A) = \int_{\Gamma} \log(z-s) \, d\mu(s),
      \qquad z \in \mathbb C \setminus (\gamma_A \cup \Sigma_-),
\end{equation}
where for each $s$ we view $\log (z-s)$ as an analytic function of the variable
$z$, with branch cut emanating from $z=s$; the cut is taken
along $\gamma_A \cup \Sigma_-$.

\section{Asymptotics of Laguerre polynomials} \label{sec:RHanalysis}

Now we have all ingredients to formulate and prove the asymptotic results for  the rescaled generalized Laguerre polynomials $p_n$, defined in \eqref{sequencex} in Section \ref{sec:intro}, under the assumption \eqref{limits}. The analysis is based on  the non-hermitian orthogonality conditions satisfied by these polynomials (Section \ref{sec:orthogonality}). For the weak asymptotics (or limiting zero distribution) we can use an analogue of the Gonchar--Rakhmanov--Stahl's results, while the strong asymptotics is derived from  the corresponding Riemann-Hilbert characterization of these polynomials (Section \ref{sec:orthogonality}) using the non-linear steepest descent method of Deift and Zhou.

\subsection{Orthogonality conditions and Riemann--Hilbert characterization} \label{sec:orthogonality}

Throughout this section we assume that $\alpha \in \C \setminus \R$.
Then,  the generalized Laguerre polynomials
$L_n^{(\alpha)}$ satisfy a non-hermitian orthogonality in the complex plane, see \cite{MR1858305}. Namely,
\begin{theorem}\label{ThOrt}
Let $\Sigma$ in $\C \setminus [0,+\infty)$ be an unbounded Jordan arc diverging in both directions toward  $+\infty $, $n \in \N$, and
$\alpha \in \C \setminus \R$.
Then
\begin{equation} \label{eq21}
    \int_{\Sigma} z^k L_n^{(\alpha)}(z) z^{\alpha} e^{-z} dz = 0,
    \quad \mbox{ for } k = 0, 1, \ldots, n-1.
\end{equation}
In addition, 
\begin{equation} \label{eq22}
    \int_{\Sigma} z^n L_n^{(\alpha)}(z) z^{\alpha} e^{-z} dz \neq 0.
\end{equation}
\end{theorem}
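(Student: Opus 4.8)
The plan is to derive the orthogonality conditions \eqref{eq21} directly from the Rodrigues formula \eqref{RodrLag} by repeated integration by parts, following the classical scheme (as in \cite{MR1858305}) but keeping careful track of the boundary terms, which is where the hypothesis $\alpha\in\C\setminus\R$ and the geometry of $\Sigma$ enter. Write $L_n^{(\alpha)}(z) = \frac{(-1)^n}{n!}\,z^{-\alpha}e^{z}\left(\frac{d}{dz}\right)^n\!\left[z^{n+\alpha}e^{-z}\right]$, so that $L_n^{(\alpha)}(z)z^{\alpha}e^{-z} = \frac{(-1)^n}{n!}\left(\frac{d}{dz}\right)^n\!\left[z^{n+\alpha}e^{-z}\right]$. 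Then for $0\le k\le n-1$,
\begin{equation*}
\int_\Sigma z^k L_n^{(\alpha)}(z)\,z^\alpha e^{-z}\,dz = \frac{(-1)^n}{n!}\int_\Sigma z^k \left(\frac{d}{dz}\right)^n\!\left[z^{n+\alpha}e^{-z}\right]dz,
\end{equation*}
and I would integrate by parts $n$ times, differentiating the monomial $z^k$ and integrating the $n$-th derivative factor. Since $k<n$, after $k+1$ steps the monomial factor is annihilated, so the only surviving contributions are the boundary terms from each of the integrations by parts; the bulk integral vanishes identically. Thus the whole computation reduces to showing that every boundary term is zero.

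The boundary terms are all of the form $\Bigl[\,(\text{polynomial in }z)\cdot\bigl(\tfrac{d}{dz}\bigr)^{j}\!\left(z^{n+\alpha}e^{-z}\right)\Bigr]_{\text{endpoints of }\Sigma}$ for $0\le j\le n-1$. By the Leibniz rule, $\bigl(\tfrac{d}{dz}\bigr)^{j}\!\left(z^{n+\alpha}e^{-z}\right) = z^{n+\alpha-j}e^{-z}\,q_j(z)$ with $q_j$ a polynomial, so each boundary term has the shape $z^{n+\alpha-j}e^{-z}\cdot(\text{polynomial})$ evaluated at the two ends of $\Sigma$. Here I use the two defining features of $\Sigma$. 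First, $\Sigma$ lies in $\C\setminus[0,+\infty)$, so $z^{n+\alpha} = z^{n}z^{\alpha}$ is single-valued and well-defined along $\Sigma$ with the principal branch of $z^\alpha$; this is precisely why one needs $\Sigma$ to avoid the positive axis (the natural branch cut of $z^\alpha$ when $\alpha\notin\Z$). Second, $\Sigma$ diverges toward $+\infty$ in both directions, so along $\Sigma$ the variable $z$ tends to infinity with $\Re z\to+\infty$; hence $e^{-z}\to 0$ exponentially while $z^{n+\alpha-j}\cdot(\text{polynomial})$ grows only polynomially, forcing every boundary term to vanish. (If instead $\Sigma$ were a finite arc with endpoints on $[0,+\infty)$ one would invoke $\Re(n+\alpha-j)>0$ to kill the terms at $z=0$; but with the unbounded $\Sigma$ the single mechanism $e^{-z}\to0$ at both ends does the job, and no lower bound on $\Re\alpha$ is needed.) This establishes \eqref{eq21}.

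For \eqref{eq22}, take $k=n$ in the same integration by parts. Now after $n$ steps the monomial $z^n$ has been differentiated exactly $n$ times, producing the constant $n!$, and the surviving bulk integral is $\frac{(-1)^n}{n!}\cdot n!\cdot(-1)^n\int_\Sigma z^{n+\alpha}e^{-z}\,dz = \int_\Sigma z^{n+\alpha}e^{-z}\,dz$, while all boundary terms vanish exactly as before. So it remains to show $\int_\Sigma z^{n+\alpha}e^{-z}\,dz\neq 0$. The cleanest way is a contour-deformation argument: since the integrand $z^{n+\alpha}e^{-z}$ is analytic in $\C\setminus[0,+\infty)$ and decays at the relevant ends, $\int_\Sigma z^{n+\alpha}e^{-z}\,dz$ is independent of the choice of such $\Sigma$, and by collapsing $\Sigma$ onto the two sides of the ray $[0,+\infty)$ one gets
\begin{equation*}
\int_\Sigma z^{n+\alpha}e^{-z}\,dz = \left(e^{2\pi i \alpha}-1\right)\int_0^\infty x^{n+\alpha}e^{-x}\,dx = \left(e^{2\pi i \alpha}-1\right)\Gamma(n+\alpha+1),
\end{equation*}
up to an overall nonzero constant depending on orientation. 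Since $\alpha\notin\R$ we have $e^{2\pi i\alpha}\ne 1$, and $\Gamma(n+\alpha+1)\ne 0$ because the Gamma function has no zeros; hence the integral is nonzero. (One should check that the real integral $\int_0^\infty x^{n+\alpha}e^{-x}\,dx$ converges, which it does since $\Re(n+\alpha+1)>0$ for $n\ge 1$ and any fixed $\alpha\notin\R$ is harmless as $n+\Re\alpha+1$ can be made positive — more carefully, $n\ge1$ and if $n+\Re\alpha\le -1$ one first deforms to a small circle around $0$ plus the two rays, where the circle contribution also needs the Hankel-contour form of $1/\Gamma$; in the regime relevant to the paper $n$ is large so $\Re(n+\alpha+1)>0$ and the elementary argument suffices.)

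The main obstacle is the rigorous justification that the boundary terms vanish and, for \eqref{eq22}, that the contour may be deformed onto the positive axis: both require uniform control of the integrand near the (infinite) ends of $\Sigma$, i.e. showing that the exponential decay of $e^{-z}$ genuinely dominates along $\Sigma$ as $z\to\infty$ in both directions. This is straightforward once one notes that ``diverging toward $+\infty$'' forces $\Re z\to+\infty$ along $\Sigma$, but it is the one place where the precise hypotheses on $\Sigma$ are indispensable, so I would state it as a short lemma on the asymptotic behaviour of $\Sigma$ before carrying out the integration by parts.
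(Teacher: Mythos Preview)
Your proposal is correct and follows essentially the same route as the paper: Rodrigues' formula plus $n$-fold integration by parts, with the boundary terms killed by the exponential decay of $e^{-z}$ along the two ends of $\Sigma$, and the $k=n$ case handled by collapsing $\Sigma$ onto the two sides of $[0,+\infty)$ to produce the factor $(e^{2\pi i\alpha}-1)\,\Gamma(n+\alpha+1)$. The only noteworthy difference is that where you worry about convergence of $\int_0^\infty x^{n+\alpha}e^{-x}\,dx$ for small $\Re(n+\alpha)$, the paper simply invokes analytic continuation in $\alpha$ of the identity obtained for $\Re(n+\alpha)>-1$, which cleanly sidesteps the Hankel-contour detour you sketch.
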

Although the selection of the branch of $z^{\alpha}$ is not relevant, for the sake of definiteness we take here  $z^{\alpha} = |z|^{\alpha} \exp(i \alpha \arg z)$, with $\arg z \in [0, 2 \pi)$.

\begin{proof} In the proof we use $f^{(k)}$ to denote the $k$-th derivative of $f$
and $w(z;\alpha):=z^{\alpha} e^{-z}$.

By the Rodrigues formula (\ref{RodrLag}),
\begin{equation}\label{RodrLag2}
L_n^{(\alpha)}(z)=\frac{(-1)^n}{n!} \, \frac{w^{(n)}(z;
n + \alpha)}{ w(z;\alpha)}.
\end{equation}
Integrating in the left hand side of \eqref{eq21} $n$ times by parts and using
(\ref{RodrLag2}), we get
 \begin{equation}\label{partes}
\begin{split}
  \int_{\Sigma} z^k L_n^{(\alpha)}(z) z^{\alpha} e^{-z} dz= \frac{(-1)^n}{n!} \,
 \sum_{j=0}^{n-1} (-1)^j \big[z^k \big]^{(j)}    w^{(n-j-1)}(z; n+\alpha)
 \bigg|_{\Sigma} \\
 +\frac{1}{n!} \, \int_{\Sigma} \big[z^k \big]^{(n)} w(z; n+\alpha)\,
 dz\,.
\end{split}
\end{equation}
Since $w(z; \alpha)$ is single-valued on $\Sigma$,
\begin{equation*}
\big[z^k \big]^{(j)}    w^{(n-j-1)}(z; n+\alpha)
\bigg|_{\Sigma}=0\,, \qquad \mbox{for } 0 \leq j \leq n-1\,.
\end{equation*}
Thus, if $k\leq n-1$, all the terms in the right-hand side of
(\ref{partes}) vanish, and (\ref{eq21}) follows.

Furthermore, for $k=n$, we also get
\[ \int_{\Sigma} z^n L_n^{(\alpha)}(z) z^{\alpha} e^{-z} \, dz =
    \frac{(-1)^n}{n!} \int_{\Sigma} z^n w^{(n)}(z; n+\alpha)
    dz = (-1)^n \int_{\Sigma} z^{\alpha+n} e^{-z} \, dz.
\]
We deform $\Sigma$ to the positive real axis to obtain
\begin{eqnarray} \nonumber
\int_{\Sigma} z^n L_n^{(\alpha)}(z)  z^{\alpha} e^{-z} \, dz
& = & (-1)^n \left(1- e^{2\pi i \alpha}\right) \int_0^{\infty} z^{\alpha + n} e^{-z} \, dz \\
\label{eq23}
&= &  (-1)^{n+1} 2i e^{\pi i \alpha} \sin (\pi \alpha)  \Gamma(\alpha + n + 1),
\end{eqnarray}
where $\Gamma$ denotes the Gamma function.
By analytic continuation the integral in (\ref{eq22}) is equal to (\ref{eq23})
for every $\alpha$, and (\ref{eq22}) follows.
\end{proof}

\begin{figure}
\centerline{\includegraphics[width=25cm]{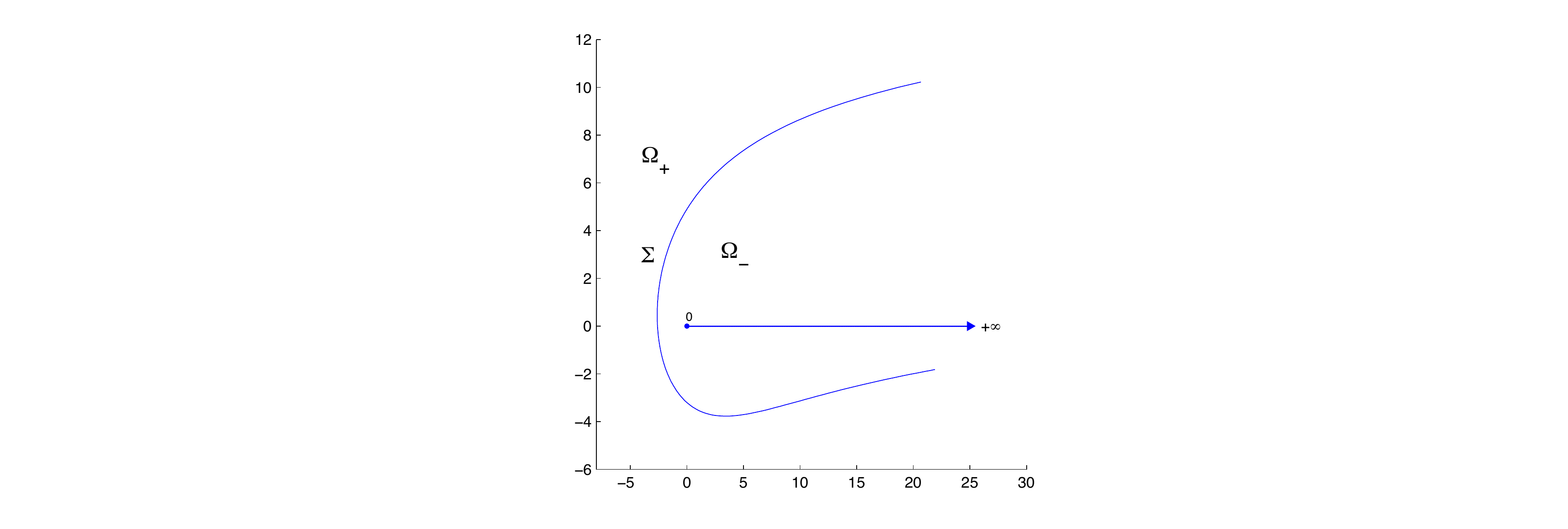}}
\caption{A contour $\Sigma $.}
\label{fig:CurvaOrtog}
\end{figure}

Consider now the monic rescaled polynomials
\begin{equation} \label{eq24}
    P_n(z) = \frac{(-1)^n n!}{n^n} L_n^{(\alpha)}(nz) = \frac{(-1)^n n!}{n^n} \, p_n(z), \qquad n = 0, 1, \dots
\end{equation}
By Theorem \ref{ThOrt},  $ P_n(z)$ verifies the orthogonality
\begin{equation*} 
    \int_{\Sigma} z^k P_n(z) z^{\alpha} e^{-nz}\, dz
    \left\{ \begin{array}{ll}
    = 0, & \quad  \mbox{ for } k = 0, 1, \ldots, n-1, \\[10pt]
    \neq 0, & \quad \mbox{ for } k = n,
    \end{array} \right.
\end{equation*}
for a contour $\Sigma$ specified in Theorem \ref{ThOrt} (see Figure~\ref{fig:CurvaOrtog}). By the classical work of Fokas, Its, and Kitaev \cite{Fokas92}, this yields a characterization of the polynomial $P_n$ in terms of a Riemann--Hilbert problem:
determine a $2 \times 2$
matrix valued function $Y : \mathbb C \setminus \Sigma \to \mathbb C^{2 \times 2}$ satisfying the following conditions:
\begin{enumerate}
\item[(a)] $Y(z)$ is analytic for $z \in \mathbb C \setminus \Sigma$,
\item[(b)] $Y(z)$ possesses continuous boundary values for $z \in \Sigma$. If $\Sigma$ is oriented clockwise, and  $Y_+(z)$, $Y_-(z)$ denote the non-tangential boundary values of $Y(z)$ on the left and right sides of $\Sigma$, respectively, then
\begin{equation*} 
    Y_+(z) = Y_-(z) \begin{pmatrix} 1 & z^{\alpha} e^{-nz} \\ 0 & 1
    \end{pmatrix},
    \qquad \mbox{ for } z \in \Sigma.
\end{equation*}
\item[(c)] $Y(z)$ has the following behavior as $z \to \infty$:
\begin{equation*} 
    Y(z) = \left( I + O\left(\frac{1}{z}\right) \right)
    \begin{pmatrix}  z^n & 0 \\ 0 & z^{-n}
    \end{pmatrix},
    \qquad \mbox{ as } z \to \infty, \ z \in \mathbb C \setminus \Sigma.
\end{equation*}
\end{enumerate}
The unique solution of this  problem for $Y$ is given by (see \cite{Fokas92})
\begin{equation*} 
    Y(z) = \begin{pmatrix}
    P_n(z) & \displaystyle{\frac{ 1}{2\pi i}  \int _{\Sigma}}
        \frac{P_n (\zeta) \zeta^{\alpha}e^{-n\zeta}}{\zeta - z} \, d\zeta \\[10pt]
    {  d_{n-1}} P_{n-1}(z) & \displaystyle{\frac{d_{n-1}}{2\pi i}  \int_{\Sigma}}
     \frac{P_{n-1}(\zeta) \zeta^{\alpha} e^{-n\zeta}}{\zeta -z} \, d\zeta
     \end{pmatrix},
    \end{equation*}
where $P_n(z)$ is the monic generalized Laguerre polynomial {\rm (\ref{eq24})}
and  the constant $d_{n-1}$ is chosen to guarantee that for the $(2,2)$ entry of $Y$,
$$
\lim_{z\to \infty} z^n Y_{22}(z) = 1.
$$
The orientation of $\Sigma$ is consistent also with the one used in Section \ref{sec:trajectories}.

\subsection{Asymptotics of varying generalized Laguerre polynomials}  \label{sec:results}

With each $p_n$ in \eqref{sequencex} we can associate its normalized zero-counting measure
$\nu_n=\nu(p_n)$, such that for any compact set $K$ in $\C$, $$
\int_K d\nu_n=\frac{\text{number of zeros of $p_n$ in $K$}}{n} $$
(the zeros are considered taking account of their multiplicity). Weak asymptotics for $p_n$'s studies convergence of the sequence $\nu_n$ in the
weak-$^*$ topology, that we denote by $\stackrel{*}{\longrightarrow}$.

\begin{theorem} \label{teoLag1}
Let the sequence of (generalized) Laguerre polynomials $p_n$ in
(\ref{sequencex}) satisfy (\ref{limits}) with $A \in \C \setminus
\R$. Then
\begin{equation*} 
\nu_n \stackrel{*}{\longrightarrow} \mu\,, \quad n \to \infty,
\end{equation*}
where $\mu$ is the probability measure given in Proposition~\ref{prop:eq_measure}. Its support is a simple analytic arc, namely the short trajectory $\gamma_A$ described in Theorem~\ref{thm:1}.
\end{theorem}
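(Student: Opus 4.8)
The plan is to derive the weak asymptotics (Theorem~\ref{teoLag1}) by invoking the Gonchar--Rakhmanov--Stahl theory, for which the preparatory work has essentially been done already. The starting point is the non-hermitian orthogonality of Theorem~\ref{ThOrt}: for any admissible contour $\Sigma$ (in particular for $\Sigma_A$ as in \eqref{def:SigmaA}), the rescaled monic polynomial $P_n$ satisfies $\int_{\Sigma} z^k P_n(z)\, z^{\alpha_n} e^{-n z}\, dz = 0$ for $k=0,\dots,n-1$. Writing the weight as $z^{\alpha_n} e^{-nz} = \exp\!\bigl(-n(z - A_n \log z)\bigr) = \exp\!\bigl(-2 n\, \psi_n(z)\bigr)$ up to an immaterial unimodular factor, where $\psi_n \to \psi$ locally uniformly off $\R_+$ and $\psi$ is precisely the external field introduced before \eqref{equilibrium}, we recognize a varying non-hermitian orthogonality whose limiting zero distribution is governed by the GRS equilibrium problem. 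The measure $\mu = \mu_A$ of Proposition~\ref{prop:eq_measure} is the equilibrium measure on $\Sigma_A$ in the field $\psi$, and its support is the short trajectory $\gamma_A$ of Theorem~\ref{thm:1}, which enjoys the $S$-property \eqref{simetria}.

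Concretely, the key steps in order are: (i) recast the orthogonality weight in the form $e^{-2n\psi_n}$ and check that $\psi_n\to\psi$ with the requisite uniformity on compacts of $\C\setminus\R_+$ (this uses only $A_n\to A$); (ii) observe that the curve $\Sigma_A$ is admissible (diverges to $+\infty$ in both directions, avoids $\R_+$), so \eqref{eq21}--\eqref{eq22} apply; (iii) invoke the GRS theorem: among all curves in the relevant homotopy class joining the two ends at $+\infty$, there is a unique one carrying a measure with the $S$-property in the field $\psi$, and the normalized counting measures $\nu_n$ converge weak-$*$ to the corresponding equilibrium measure; (iv) identify that curve and that measure with $\gamma_A$ and $\mu_A$ via Proposition~\ref{prop:eq_measure} and the fact (established in Section~\ref{sec:trajectories}) that $\gamma_A$ is a trajectory of $\varpi_A = -R_A(z)^2 z^{-2}\, dz^2$, hence has the $S$-property; (v) conclude that $\supp\mu = \gamma_A$ is a simple analytic arc, as asserted, and that all zeros of $p_n$ accumulate there. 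Uniqueness in the GRS setup, together with uniqueness of $\gamma_A$ from Corollary~\ref{cor:existenceTrajectories} and Lemma~\ref{lemma:integrals}, pins down the limit.

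The main obstacle, and the point requiring the most care, is the application of the GRS theorem in the \emph{complex} setting: the external field $\psi$ is harmonic but not of the form $-\Re(\text{analytic})$ globally (it contains the multivalued term $\tfrac{\Im A}{2}\arg z$), and the admissible contours form a continuum, so one must verify that the hypotheses of the theorem of Gonchar--Rakhmanov \cite{Gonchar:87} (or Stahl \cite{Stahl:86}) are genuinely met: finiteness of the logarithmic energy, the max-min characterization of the extremal compact, and the fact that the relevant extremal problem is solved by a set with the $S$-property in this field. The cleanest route is to follow verbatim the reduction carried out for real $\alpha_n$ in \cite{MR1858305}, checking at each step that no use was made of reality of $A$ beyond what is now supplied by Section~\ref{sec:trajectories} — in particular, that the candidate equilibrium measure $d\mu_A = \tfrac{R_A(z)_+}{2\pi i z}\,dz$ is positive on $\gamma_A$ (which holds because $\gamma_A$ is a horizontal trajectory of $\varpi_A$, so $R_A(t)/t\,dt$ is real there) and that the variational inequality $V^\mu+\psi\ge\ell$ holds on all of $\Sigma_A$, which is exactly \eqref{signsPhi} together with Corollary~\ref{prop:progressive}. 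Once the GRS machinery is seen to apply, the identification of the limit is immediate from Proposition~\ref{prop:eq_measure}.
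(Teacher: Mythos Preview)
Your proposal is correct and follows essentially the same route as the paper: the paper's own proof simply states that Theorem~\ref{teoLag1} ``is a straightforward consequence of the main theorem in \cite{Gonchar:87}, see also \cite{MR1858305}'', relying on the non-hermitian orthogonality (Theorem~\ref{ThOrt}) and on Proposition~\ref{prop:eq_measure} for the equilibrium measure with the $S$-property. Your write-up is in fact more detailed than the paper's, explicitly flagging the verification of the GRS hypotheses (positivity of $\mu_A$ on $\gamma_A$, the variational inequality via Corollary~\ref{prop:progressive}, and the fact that the complex external field fits the framework), which the paper leaves implicit.
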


Regarding the strong asymptotics we have a result very similar to that in \cite{Kuijlaars/McLaughlin:01}, but referring now to the short trajectory $\gamma_A$. Because of the $n$-dependence of $\alpha_n$, all of the notions and results introduced
previously are $n$-dependent. For example, we have that
the curves $\gamma_{A_n}$  are all varying with $n$,
and so we denote them by $\gamma_n$; clearly, they tend to the
limiting curve $\gamma_A$.
Likewise, we have that the functions $R_{A_n}$, $g$, $\phi$, and $\tilde{\phi}$,
as well as all matrix-valued functions are $n$-dependent, and we also use a
subscript $n$ to denote their dependence on $n$, using the notation without
subscript $n$ when referring to the limiting case.

\begin{theorem} \label{thm:strongA}
For the rescaled generalized
Laguerre polynomials $p_n$ as $n \to \infty$,
\begin{enumerate}
\item[\rm (a)] {\bf (Asymptotics  away from $\gamma_A$.)} \\
Uniformly for $z$ in compact subsets of $\mathbb C \setminus \gamma_A$,
we have as $n \to \infty$,
\begin{equation*} 
    p_n(z) = \frac{(-n)^n}{n!} e^{ng_n(z)}
        \left( \frac{1 + R_n'(z)}{2} \right)^{1/2} \left(1 + \mathcal O \left(\frac{1}{n}\right) \right).
\end{equation*}
\item[\rm (b)] {\bf (Asymptotics on $+$-side of $\gamma_n$, away
from endpoints.)} \\
Uniformly for $z$ on the $+$-side of $\gamma_n$ away from $\zeta_\pm$,
we have as $n \to \infty$,
\begin{align*} 
   p_n(z)     = & \frac{(-n)^n}{n!} e^{ng_n(z)}
       \left( \frac{1 + R_n'(z)}{2} \right)^{1/2} \\
         & \times
       \left[ 1 - \left(\frac{1-R_n'(z)}{1+R_n'(z)} \right)^{1/2} e^{2n\phi_n(z)}
        + \mathcal O \left(\frac{1}{n}\right) \right].
\end{align*}
\item[\rm (c)] {\bf (Asymptotics on $-$-side of $\gamma_n$, away from endpoints.)} \\
Uniformly for $z$ on the $-$-side of $\gamma_n$ away from $\zeta_\pm$,
we have as $n \to \infty$,
\begin{align*} 
    p_n(z)    = & \frac{(-n)^n}{n!} e^{ng_n(z)}
       \left( \frac{1 + R_n'(z)}{2} \right)^{1/2} \\
         & \times
       \left[ 1 + \left(\frac{1-R_n'(z)}{1+R_n'(z)} \right)^{1/2} e^{2n\phi_n(z)}
        + \mathcal O \left(\frac{1}{n}\right) \right].
\end{align*}
\item[\rm (d)] {\bf (Asymptotics near $\zeta_+$.)} \\
Uniformly for $z$ in a (small) neighborhood of $\zeta_+$, we have as $n \to \infty$,
\begin{align*} 
\lefteqn{p_n(z)   = \frac{(-n)^n}{n!}
    \exp\left( \frac{n}{2} (- A_n \log z + z  + \ell)\right) \sqrt{\pi} } \\
      &   \times \,
    \left[ \left(\frac{z-\bar\beta_n}{z-\beta_n}\right)^{1/4} \left( n^{2/3} f_n(z) \right)^{1/4}
        \Ai(n^{2/3} f_n(z)) \left(1 +\mathcal O\left(\frac{1}{n}\right) \right) \right. \\
      & \qquad 
    \left. - \left(\frac{z-\beta_n}{z-\bar\beta_n} \right)^{1/4}
        \left( n^{2/3} f_n(z) \right)^{-1/4}
        \Ai'(n^{2/3} f_n(z)) \left(1 +\mathcal O\left(\frac{1}{n}\right) \right) \right],
\end{align*}
where
\begin{equation*} 
    f_n(z) =f(z,A_n), \quad f(z,A) = \left[\frac{3}{2} \phi(z) \right]^{2/3}.
\end{equation*}
\end{enumerate}
\end{theorem}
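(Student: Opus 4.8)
The plan is to carry out the Deift--Zhou non-linear steepest descent analysis of the Riemann--Hilbert problem for $Y$ stated in Section~\ref{sec:orthogonality}, along the contour $\Sigma = \Sigma_{A_n}$ of \eqref{def:SigmaA}, whose existence and geometric properties are guaranteed by Corollary~\ref{prop:progressive} and Theorem~\ref{thm:1}. The argument follows closely the scheme of Kuijlaars and McLaughlin \cite{Kuijlaars/McLaughlin:01}; the essential new input is that the contour and all associated objects are now the complex-$A$ ones constructed in Sections~\ref{sec:trajectories}--\ref{sec:auxiliary}, and that several estimates which were real-analytic there must be re-derived from Proposition~\ref{prop:confmapping} and \eqref{signsPhi}. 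First I would perform the normalization $Y\mapsto T$ using the $g$-function \eqref{gfunction}: set $T(z)=e^{-n\ell_n\sigma_3/2}\,Y(z)\,e^{-n g_n(z)\sigma_3}\,e^{n\ell_n\sigma_3/2}$ (with $\sigma_3=\diag(1,-1)$ and $\ell_n$ the equilibrium constant of Proposition~\ref{prop:eq_measure}), so that $T$ is normalized to $I$ at infinity and its jump on $\Sigma$ is governed by $\phi_n$ and $\widetilde\phi_n$: on $\gamma_n$ the diagonal entries of the jump become oscillatory (purely imaginary exponents, since $\Re\phi_n=0$ there by the very definition of $\gamma_A$), while on $\Sigma_-$ and $\Sigma_+$ the off-diagonal entry carries $e^{2n\phi_n}$ resp.\ $e^{2n\widetilde\phi_n}$, which decays exponentially because $\Re\phi_n>0$ on $\Sigma_-$ and $\Re\widetilde\phi_n>0$ on $\Sigma_+$ (Corollary~\ref{prop:progressive}).

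Next I would open lenses around the short trajectory $\gamma_n$, factoring the oscillatory jump $\begin{pmatrix}1&0\\ * &1\end{pmatrix}\begin{pmatrix}0& *\\ -*&0\end{pmatrix}\begin{pmatrix}1&0\\ *&1\end{pmatrix}$ in the standard way, and deform the contour into the two lens boundaries. The key point to check here is that the lens regions can be chosen inside $\Omega_-^{(1)}\cup\Omega_-^{(2)}$ so that $\Re\phi_n<0$ on the $+$-side lens (this is exactly \eqref{signsPhi}) and $\Re\widetilde\phi_n<0$ on the $-$-side lens boundary; consequently the lens jumps for the new matrix $S$ tend to the identity exponentially fast away from the endpoints $\zeta_\pm$. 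One then builds the \emph{global parametrix} $N$ solving the jump $\begin{pmatrix}0&1\\-1&0\end{pmatrix}$ on $\gamma_A$ with the correct behavior at $\zeta_\pm$; explicitly $N(z)=D_\infty^{\sigma_3}M(z)\,a(z)^{\sigma_3}D_\infty^{-\sigma_3}$ where $a(z)=\big((z-\zeta_-)/(z-\zeta_+)\big)^{1/4}$, $M=\tfrac12\begin{pmatrix}a+a^{-1}& (a-a^{-1})/i\\ -(a-a^{-1})i& a+a^{-1}\end{pmatrix}$-type combination, and the diagonal factor $(1+R_n'(z))/2$ that appears in the statement is precisely $N_{11}N_{22}$ evaluated appropriately. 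Near each of $\zeta_\pm$ one constructs an \emph{Airy parametrix} in a fixed small disk, using the conformal map $f_n(z)=\big[\tfrac32\phi_n(z)\big]^{2/3}$ of part~(d); that $f_n$ is indeed conformal near $\zeta_+$ follows from Theorem~\ref{thm:1}, since $\zeta_+$ is a simple zero of $D_{A_n}$ so $\phi_n$ vanishes there to order $3/2$. Matching the Airy parametrix to $N$ on the circle $|z-\zeta_\pm|=\delta$ gives an error of order $O(1/n)$ there.

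Then I would set $R(z)=S(z)P^{(\infty)\,-1}(z)$ outside the disks and $R(z)=S(z)P^{(\zeta_\pm)\,-1}(z)$ inside; $R$ has jumps only on the circles $|z-\zeta_\pm|=\delta$ (where $R_+=R_-(I+O(1/n))$) and on the parts of the lens/contour outside the disks (where the jump is $I+O(e^{-cn})$). By the standard small-norm theory $R(z)=I+O(1/n)$ uniformly in $z$, and one must track the $1/n$ correction on the disk boundaries carefully enough to see that it is genuinely $O(1/n)$ and not worse. Unwinding the chain $Y\to T\to S\to R$ and reading off $P_n(z)=Y_{11}(z)$ in the various regions — away from $\gamma_A$ use $Y=e^{n\ell_n\sigma_3/2}R\,N\,e^{ng_n\sigma_3}e^{-n\ell_n\sigma_3/2}$; on the $\pm$-sides of $\gamma_n$ pick up the extra lens factor, which produces the $\mp\big((1-R_n')/(1+R_n')\big)^{1/2}e^{2n\phi_n}$ term; near $\zeta_+$ substitute the Airy parametrix, whose entries are the combinations of $\Ai(n^{2/3}f_n)$ and $\Ai'(n^{2/3}f_n)$ with the prefactors $\big((z-\bar\beta_n)/(z-\beta_n)\big)^{\pm1/4}(n^{2/3}f_n)^{\pm1/4}$ displayed in~(d) — together with the conversion $p_n=(-1)^n n^n/n!\cdot P_n$ from \eqref{eq24}, yields exactly the four formulas of the theorem. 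The main obstacle I anticipate is not any single step but the bookkeeping of the complex, non-symmetric geometry: one must verify uniformly (in $z$ on the relevant set \emph{and} in $A_n$ near $A$) the sign conditions $\Re\phi_n\lessgtr 0$, $\Re\widetilde\phi_n\lessgtr 0$ on every piece of the deformed contour and lens, using Proposition~\ref{prop:confmapping}; check that the disks at $\zeta_\pm$ and the lenses can be chosen with an $n$-independent size despite $\gamma_n$, $\zeta_\pm(A_n)$ and $\phi_n$ all drifting with $n$ (here the continuity statements of Section~\ref{sec:trajectories} and the uniqueness in Theorem~\ref{thm:1} are what make this legitimate); and confirm that the branch choices for $R_n^{1/2}$, $a^{1/2}$, $(z-\beta_n)^{1/4}$ and $f_n^{1/4}$ are mutually consistent so that the stated square roots and fourth roots are single-valued on the indicated sets.
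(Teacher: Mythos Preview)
Your proposal is correct and follows essentially the same route as the paper: normalization with the $g$-function, lens opening on $\gamma_n$ using the sign information from Proposition~\ref{prop:confmapping} and \eqref{signsPhi}, the explicit global parametrix $N$, Airy local parametrices at $\zeta_\pm$ via $f_n=(\tfrac32\phi_n)^{2/3}$, and a final small-norm argument, exactly as in \cite{Kuijlaars/McLaughlin:01}. Two small bookkeeping slips to fix when you write it out: the $+$-side lens lies in $\Omega_+$ (not in $\Omega_-^{(1)}\cup\Omega_-^{(2)}$), with the $-$-side lens in $\Omega_-^{(1)}$, so that \eqref{signsPhi} indeed gives $\Re\phi_n<0$ on both lips; and on $\Sigma_\pm$ the off-diagonal entries are $e^{-2n\phi_n}$ on $\Sigma_+$ and $e^{-2n\widetilde\phi_n}$ on $\Sigma_-$ (cf.\ \eqref{eq45}--\eqref{eq46}), though by Proposition~\ref{prop:confmapping} both $\Re\phi_n$ and $\Re\widetilde\phi_n$ are positive in $\Omega_-^{(2)}$ so the decay holds either way.
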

Recall that $R$ was introduced in \eqref{def_RA}, while $g$ was defined in \eqref{gfunction}. See Section~\ref{sec:proofs} below for a more detailed explanation about the conformal mapping $f$. 
\begin{remark}
We can write a similar asymptotic formula at the other endpoint of $\gamma_A$; it will be in terms of the function $\widetilde \phi$ appropriately redefined in a neighborhood $\mathcal O$ of $\zeta_-$ in such a way that it is analytic in $\mathcal O \setminus \gamma_A$. Namely, by \eqref{otherside}, we should use there
$$
\widehat \phi(z)=\begin{cases}
\widetilde \phi(z), & \text{on the ``$-$'' side of $\sigma_0$}, \\
\widetilde \phi(z)+ \pi i A, & \text{on the ``$+$'' side of $\sigma_0$}, \\
\end{cases}
$$
where again $\sigma_0$ is oriented from  the origin to $\zeta_-$. 
\end{remark}

As a consequence of Theorem~\ref{thm:strongA}, we can make a stronger statement about the zero asymptotics of $p_n$ (compare it with Theorem~\ref{teoLag1}):
\begin{corollary} \label{corollary:zeros}
Under assumptions of Theorem~\ref{thm:strongA}, for every neighborhood $\mathcal O$ of $\gamma_A$, there is $N\in \N$ such that for every $n\geq N$, all zeros of $p_n$ are in $\mathcal O$, and no zeros at the $(\gamma_A)_+$ side of $\gamma_A$.
\end{corollary}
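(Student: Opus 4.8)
The plan is to deduce the corollary directly from the strong asymptotics in Theorem~\ref{thm:strongA}, reading off from the four expansions (a)--(d) the regions where $p_n$ is forced to be zero-free, and checking that for large $n$ these cover $\C$ minus a thin one-sided neighbourhood of $\gamma_A$ lying on the $-$ side.

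First I would treat the complement of a fixed neighbourhood $\mathcal O\supset\gamma_A$. By Theorem~\ref{thm:strongA}(a), there $p_n(z)$ is the nonvanishing constant $\tfrac{(-n)^n}{n!}$ times the nonvanishing exponential $e^{ng_n(z)}$ times $\big(\tfrac{1+R_n'(z)}{2}\big)^{1/2}$ times $1+\mathcal O(1/n)$, with an error that is in fact uniform up to $z=\infty$ because the final error matrix of the steepest-descent analysis underlying Theorem~\ref{thm:strongA} is $I+\mathcal O(1/n)$ on $\overline{\C}$ (alternatively one invokes the classical confinement of all zeros of $p_n$ to a fixed compact set). So one only needs $\big(\tfrac{1+R_n'(z)}{2}\big)^{1/2}$ bounded away from $0$ on compacts of $\C\setminus\gamma_A$, uniformly in large $n$. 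From $2R_AR_A'=D_A'$ one gets $R_A'=(z-A-2)/R_A$, so $1+R_A'(z)=0$ would force $D_A=(A+2-z)^2$, i.e.\ $A^2=(A+2)^2$, impossible for $A\notin\R$; since $A_n\to A$ and $\gamma_n\to\gamma_A$, the functions $R_n'$ converge to $R_A'$ uniformly on compacts of $\C\setminus\gamma_A$, so $|1+R_n'|$ has a uniform positive lower bound there. Hence $p_n$ has no zeros in $\C\setminus\mathcal O$ for $n$ large, and as $\deg p_n=n$ this already proves the first assertion.

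It remains to exclude the $+$ side of $\gamma_n$ inside $\mathcal O$, where I would use Theorem~\ref{thm:strongA}(b): there $p_n$ equals the (nonvanishing) prefactor times the bracket $1-\big(\tfrac{1-R_n'(z)}{1+R_n'(z)}\big)^{1/2}e^{2n\phi_n(z)}+\mathcal O(1/n)$, so it suffices to keep that bracket away from $0$. By Proposition~\ref{prop:confmapping} (cf.\ \eqref{signsPhi}) $\Re\phi<0$ on $\Omega_+$, that is, on the $+$ side of $\gamma_A$, with $\Re\phi=0$ on $\gamma_A$ and nonvanishing normal derivative there; by uniform convergence $\Re\phi_n(z)\le -c\,\dist(z,\gamma_n)$ near $\gamma_n$ on the $+$ side, so $|e^{2n\phi_n(z)}|<1$ there. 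A short algebraic computation, using the factorizations $\big(\tfrac{1+R_n'}{2}\big)^{1/2}=\tfrac12(a_n+a_n^{-1})$ and $\big(\tfrac{1-R_n'}{2}\big)^{1/2}=\tfrac12(a_n-a_n^{-1})$ with $a_n=\big(\tfrac{z-\zeta_-}{z-\zeta_+}\big)^{1/4}$, shows that $\big|\big(\tfrac{1-R_n'}{1+R_n'}\big)^{1/2}\big|\le 1$ on the $+$ side of $\gamma_n$ (with equality only on $\gamma_n$). Thus the subtracted term has modulus $<1$ on the strict $+$ side, the bracket stays away from $0$ at positive distance from $\gamma_n$, and $p_n$ is zero-free there. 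In fixed neighbourhoods of $\zeta_\pm$ one uses instead the Airy representation~(d) (with the $\widehat\phi$-variant at $\zeta_-$ from the Remark following Theorem~\ref{thm:strongA}): since $f_n$ is conformal at $\zeta_+$ and $n^{2/3}f_n(z)=\big(\tfrac{3n}{2}\phi(z)\big)^{2/3}$ stays off $(-\infty,0]$ on the $+$ side (because $\Re\phi<0$ there), while the zeros of $\Ai$ and $\Ai'$ lie on $(-\infty,0]$, the Airy combination in (d) does not vanish on the $+$ side near $\zeta_+$.

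The main obstacle is the transition layer: for $z$ on the $+$ side at distance $\lesssim 1/n$ from $\gamma_n$ both $\big|\big(\tfrac{1-R_n'}{1+R_n'}\big)^{1/2}\big|$ and $|e^{2n\phi_n(z)}|$ are of order one but strictly below one, so the bracket in (b) can be as small as the error $\mathcal O(1/n)$ and a bare triangle-inequality estimate does not conclude. Making the exclusion rigorous up to $\gamma_n$ itself needs either a sharper local expansion of $p_n$ in a shrinking one-sided neighbourhood of $\gamma_n$, or a Rouch\'e-type comparison of $p_n$ with its leading term along thin contours hugging the $+$ side of $\gamma_n$; the decisive input in either approach is the strict inequality $\big|\big(\tfrac{1-R_n'}{1+R_n'}\big)^{1/2}e^{2n\phi_n(z)}\big|<1$ on the punctured $+$ side, which follows from the sign of $\Re\phi_n$ and the amplitude bound above. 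A minor additional point is to check that the endpoint disks used in (d) can be taken of fixed size overlapping the region covered in the first step, so that no sliver of the $+$ side is left uncovered.
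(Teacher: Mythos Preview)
Your overall scheme (use (a) away from $\gamma_A$, (b)/(c) near $\gamma_A$, (d) at the endpoints) matches the paper's, and your treatment of the exterior region is essentially the paper's argument. The gap is in the amplitude estimate on the $+$ side, and it is exactly what causes the ``transition layer'' obstacle you flag.

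You claim that $\bigl|\tfrac{1-R_n'}{1+R_n'}\bigr|^{1/2}\le 1$ on the $+$ side with equality \emph{only on} $\gamma_n$. This is false: $\bigl|\tfrac{1-R_A'}{1+R_A'}\bigr|=1$ is equivalent to $\Re R_A'(z)=0$, and (in your own $a$-parametrization) to $\Re a^2=0$, i.e.\ to $(z-\zeta_-)/(z-\zeta_+)\in(-\infty,0)$, i.e.\ to $z$ on the straight segment $[\zeta_-,\zeta_+]$, \emph{not} on $\gamma_A$. The paper's proof hinges precisely on this: by Lemma~\ref{lemma:convexity}, $\gamma_A\cap[\zeta_-,\zeta_+]=\{\zeta_-,\zeta_+\}$, so $\gamma_A\cup[\zeta_-,\zeta_+]$ bounds a Jordan domain $\mathcal D$ whose $\gamma_A$-boundary is the $-$ side of $\gamma_A$; since $\Re R_A'\to 1$ at infinity, one gets $\Re R_A'>0$ outside $\mathcal D$ and $<0$ inside, hence $\bigl|\tfrac{1-R_A'}{1+R_A'}\bigr|<1$ on the $+$ side and $>1$ on the $-$ side. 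Crucially, along the $+$ side of $\gamma_A$ (away from $\zeta_\pm$) one is at a \emph{fixed positive distance} from $[\zeta_-,\zeta_+]$, so the amplitude ratio is bounded above by some $1-\delta$ with $\delta>0$ independent of $z$ and $n$. Combined with $|e^{2n\phi_n}|\le 1$ (from $\Re\phi_n\le 0$), the product is $\le 1-\delta$, and the zero condition $\bigl|\tfrac{1-R_n'}{1+R_n'}\bigr|^{1/2}e^{2n\Re\phi_n}=1+\mathcal O(1/n)$ coming from (b) cannot hold. So there is no transition-layer difficulty: the amplitude factor does \emph{not} tend to $1$ as $z\to\gamma_A^+$, only the exponential one does, and that is enough.

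A second, smaller issue: your endpoint argument is not quite right. The expression in (d) is a linear combination of $\Ai$ and $\Ai'$ with $z$-dependent coefficients, so the fact that the zeros of $\Ai$ and $\Ai'$ individually lie on $(-\infty,0]$ does not by itself exclude zeros of the combination off that ray. (The paper's own proof is terse here too and effectively relies on (b)/(c) in the endpoint vicinity; a clean treatment would track the Airy parametrix more carefully.)
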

Figure~\ref{fig:trajectoriesWithZeros} is a good illustration of the statement of this corollary.

\begin{figure}[htb]
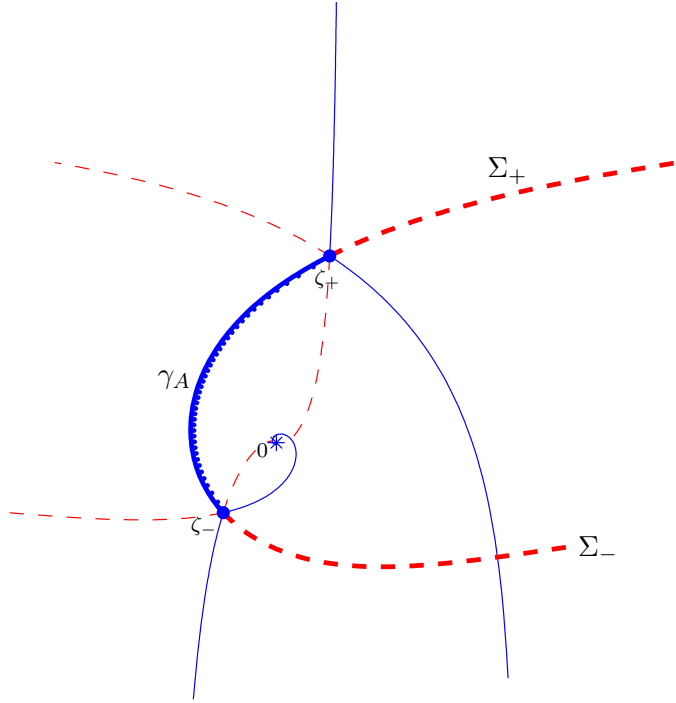

\centering \begin{overpic}[scale=0.9]%
{Zeros_and_trajectories}%
    \put(36,37){\scriptsize $0 $}
  \put(28,28){\scriptsize $\zeta_- $}
    \put(43,58){\scriptsize $\zeta_+ $}
     \put(64,71){\small $\Sigma_{ +}$}
          \put(75,25){\small $\Sigma_{-}$}
\put(24,46){\small $\gamma_A$}
\end{overpic}
\caption{Zeros of the Laguerre polynomials (small dots) superimposed to a typical critical graph for the trajectories and orthogonal trajectories of $\varpi_A$.}
\label{fig:trajectoriesWithZeros}
\end{figure}

\subsection{Proofs of the asymptotic results} \label{sec:proofs}

All proofs are based on the complex non-hermitian varying
orthogonality satisfied by the generalized Laguerre polynomials with complex coefficients (Theorem \ref{ThOrt}). The cornerstone is the existence of the equilibrium measure $\mu$ having the $S$-property, established in Proposition~\ref{prop:eq_measure}.

The connection of the weak asymptotics of non-hermitian orthogonal polynomials  with the equilibrium with symmetry was established first by Stahl \cite{Stahl:86} (for a fixed weight) and extended by Gonchar and Rakhmanov to a varying orthogonality in  \cite{Gonchar:87}.
Theorem~\ref{teoLag1} is a straightforward consequence of the main theorem in \cite{Gonchar:87}, see also \cite{MR1858305}.

On the other hand, Theorem~\ref{thm:strongA} is established using the non-linear steepest descent analysis of Deif and Zhou \cite{MR99g:34038, MR94d:35143, MR2000g:47048} of the Riemann--Hilbert (RH) problem described in Section~\ref{sec:orthogonality}, where we set $\Sigma = \Sigma_A$, as defined in \eqref{def:SigmaA}. The proof follows the scheme of the work~\cite{Kuijlaars/McLaughlin:01} almost literally. Thus, instead of repeating all the calculations step by step, we describe here the main transformations, referring the interested reader to~\cite{Kuijlaars/McLaughlin:01}  for  details.

\begin{figure}[htb]
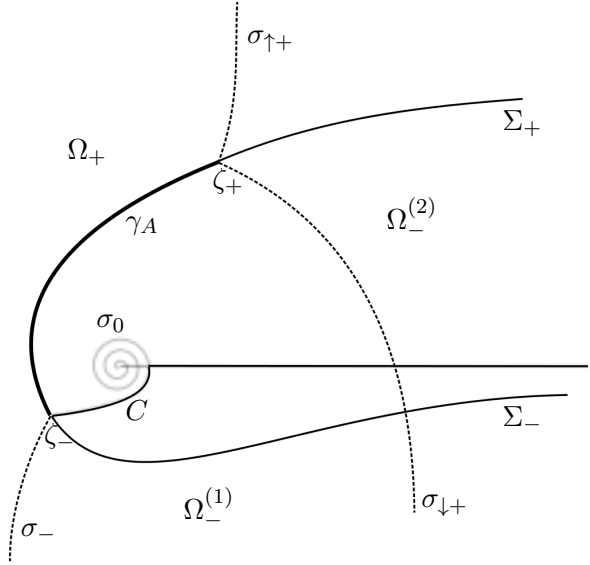

\centering
\mbox{\begin{overpic}[scale=0.9]%
{critical_graph_mod}%
       \put(35,65){\small $ \zeta_+ $}
       \put(10,70){\small $ \Omega_+ $}
       \put(30,8){\small $ \Omega_-^{(1)} $}
       \put(65,58){\small $ \Omega_-^{(2)} $}
       \put(6,21){\small $ \zeta_-$}
        \put(20,58){\small $ \gamma_A$}
        \put(2,5){\small $ \sigma_{-}$}
         \put(15,41){\small $ \sigma_{0}$}
        \put(71,10){\small $ \sigma_{\downarrow+}$}
        \put(41,90){\small $ \sigma_{\uparrow+}$}
        \put(85,75){\small $ \Sigma_+$}
         \put(85,24){\small $ \Sigma_-$}
         \put(20,25){\small $ C$}
\end{overpic}} 
\caption{Domain of definition of $\varphi$ in \eqref{newPhiDef}. }
\label{fig:critical_gr_mod}
\end{figure}

It is convenient to redefine slightly function $\phi$ in \eqref{def:phi}, moving the branch cuts from $\sigma_-$ to $\Sigma_-$, and from $\sigma_0$ to $\R_+$ as follows. First, we continue $\phi$ analytically from $\Omega_+$ through $\sigma_-$ to the domain bounded by $\sigma_-$ and $\Sigma_-$. Moreover, let $C$ be the arc of $\sigma_0$ from $\zeta_-$ to its first intersection with $\R_+$, see Figure~\ref{fig:critical_gr_mod}. Then $\Omega_-^{(1)}\setminus (C\cup \R_+)$ consists of two connected components: ``upper'' (containing the origin) and ``lower'' domains. Hence, if we define
\begin{equation}
\label{newPhiDef}
\varphi(z)=\begin{cases}
 \phi(z)- \pi i (2+A), &\text{if $z$ in the domain bounded by $\sigma_-$ and $\Sigma_-$, }\\
 \phi(z)- \pi i A, &\text{if $z$ in the domain bounded by  $\Sigma_-$, $C$ and $\R_+$, } \\
 \phi(z), &\text{if $z$ in the remaining domain bounded by  $\gamma_A$, $C$ and $\R_+$,}
\end{cases}
\end{equation}
it will be holomorphic in $\C\setminus (\gamma_A\cup \Sigma_- \cup \R_+)$, see our analysis in Section~\ref{sec:auxiliary}.

As usual, the first transformation of the Riemann-Hilbert problem for $Y$ is the regularization at infinity by means of the function $g$ introduced in~\eqref{gfunction}. By \eqref{identityV}, there is a constant $\ell$ such that
\begin{equation} \label{main_identity_gfunction}
    g(z) = \frac{1}{2} \left( - A \log z + z - \varphi(z) + \ell \right) ,
    \qquad z \in \mathbb C \setminus (\gamma_A \cup \Sigma_- \cup \R_+),
\end{equation}
where $\log z$ is defined with a branch cut along $\R_+$.

 We define for
$z \in \mathbb C \setminus \Sigma_A$,
\begin{equation*} 
    U(z) = e^{-n (\ell/2) \sigma_3} Y(z)
    e^{-ng(z) \sigma_3} e^{n (\ell/2) \sigma_3}.
\end{equation*}
Here, and in what follows, $\sigma_3$ denotes the Pauli matrix
$\sigma_3 =\left( \begin{array}{cc} 1 & 0 \\ 0 & -1 \end{array} \right)$,
so that for example $e^{-ng(z) \sigma_3} = \left( \begin{array}{cc}
    e^{-ng(z)} & 0 \\ 0 & e^{ng(z)}
    \end{array} \right)$.

    From the Riemann--Hilbert problem for $Y$ it follows by a straightforward
calculation that $U$ is the unique solution of the following RH problem:  determine
$U : \mathbb C \setminus \Sigma \to \mathbb C^{2\times 2}$ such that
\begin{enumerate}
\item[(a)] $U(z)$ is analytic for $z \in \mathbb C \setminus \Sigma_A$,
\item[(b)] $U(z)$ possesses continuous boundary values for $z \in \Sigma_A$,
denoted by $U_+(z)$ and $U_-(z)$,  and
\begin{equation} \label{jumpForU}
    U_+(z) = U_-(z)
    \left( \begin{array}{cc}
    e^{-n(g_+(z) -g_-(z))} & z^{An} e^{-nz} e^{n(g_+(z)+g_-(z) - \ell)} \\
    0 & e^{n(g_+(z) - g_-(z))} \end{array} \right)
\end{equation}
for $z \in \Sigma_A$,
\item[(c)] $U(z)$ behaves like the identity at infinity:
\begin{equation*} 
    U(z) = I +\mathcal O\left(\frac{1}{z}\right)
    \qquad \mbox{ as } z \to \infty, \quad z \in \mathbb C \setminus \Sigma_A.
\end{equation*}
\end{enumerate}

The jump relation \eqref{jumpForU} for $U$ has a different form on
the three parts $\Sigma_A$. Using \eqref{main_identity_gfunction} it is easy to obtain the following jump relations for $g$ across
the contour $\Sigma_A$ (see~\cite{Kuijlaars/McLaughlin:01}):
\begin{equation} \label{eq44}
    U_+(z) = U_-(z)
    \left(\begin{array}{cc}
    e^{ 2n\phi_+(z)} & 1 \\ 0 & e^{ 2n \phi_-(z)}
    \end{array} \right)
    \qquad \mbox{for } z \in \gamma_A,
\end{equation}
\begin{equation} \label{eq45}
    U_+(z) = U_-(z)  \left( \begin{array}{cc}
    1 & e^{- 2n  \phi (z)}  \\
    0 & 1 \end{array} \right)
    \qquad \mbox{ for } z \in \Sigma_+,
\end{equation}
and 
\begin{equation} \label{eq46}
    U_+(z) = U_-(z)  \left( \begin{array}{cc}
    1 & e^{- 2n \widetilde \phi(z)}  \\
    0 & 1 \end{array} \right)
    \qquad \mbox{ for } z \in \Sigma_-.
\end{equation}

The second transformation of the RH problem deals with the oscillatory behavior in the jump matrix for $U$ on $\gamma_A$, see \eqref{eq44}, and is based on its  standard factorization:
\begin{equation} \label{eq47}
    \left(\begin{array}{cc}
    e^{2n\phi_+(z)} & 1 \\ 0 & e^{2n \phi_-(z)}
    \end{array} \right) =
    \left(\begin{array}{cc}
    1 & 0 \\ e^{2n \phi_-(z)} & 1
    \end{array} \right)
    \left(\begin{array}{cc}
    0 & 1 \\ -1 & 0
    \end{array} \right)
    \left(\begin{array}{cc}
    1 & 0 \\ e^{2n \phi_+(z)} & 1
    \end{array} \right).
\end{equation}

As part of the steepest descent method we  introduce the oriented contour $\Sigma^T$, which consists of $\Sigma_A$ plus two simple curves
$\gamma_\pm$  from $\zeta_-$ to $\zeta_+$, contained in $\Omega_+$ and $\Omega_-$,
respectively, as shown in Figure~\ref{fig:lens1}.
We choose $\gamma_\pm$   such that $\Re \phi(z) < 0$ on $\gamma_\pm$, which, as it follows from \eqref{signsPhi}, is always possible. 
Then $\mathbb C \setminus \Sigma^T$ has four connected components, denoted by
$\Omega_1$, $\Omega_2$, $\Omega_3$, and $\Omega_4$ as indicated in Figure~\ref{fig:lens1}.

\begin{figure}[htb]
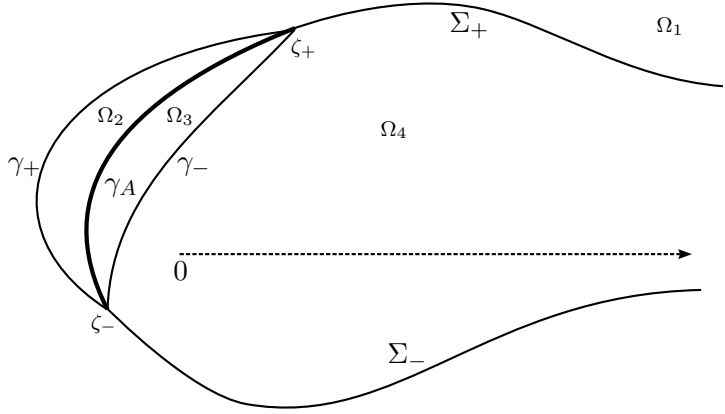

\centering
\begin{overpic}[scale=1]%
{lens1}%
 \put(20,19){\small $0 $}
  \put(10,32){\small $\gamma_A $}
  \put(-4,35){\small $\gamma_+ $}
    \put(20.5,35){\small $\gamma_- $}
          \put(60,55){\small $\Sigma_+ $}
      \put(37,52){\scriptsize $\zeta_+ $}
         \put(8,12){\scriptsize $\zeta_- $}
\put(51,7){\small $\Sigma_- $}
\put(90,55){\scriptsize $\Omega_1 $}
\put(9,42){\scriptsize $\Omega_2 $}
\put(19,42){\scriptsize $\Omega_3 $}
\put(50,40){\scriptsize $\Omega_4 $}
\end{overpic}
\caption{Contour $\Sigma^T $ and domains
$\Omega_i$, $i=1, \ldots, 4$.}
\label{fig:lens1}
\end{figure}

Consequently, we define $T : \mathbb C \setminus \Sigma^T \to \mathbb C^{2 \times 2}$ by
\begin{equation*} 
    T(z) = U(z) \qquad \mbox{ for } z \in \Omega_1 \cup \Omega_4,
\end{equation*}
\begin{equation*} 
    T(z) = U(z)  \left(\begin{array}{cc} 1 & 0 \\
    -e^{2n \phi(z)} & 1 \end{array} \right)
    \qquad \mbox{ for } z \in \Omega_2,
\end{equation*}
\begin{equation*} 
    T(z) = U(z)  \left(\begin{array}{cc} 1 & 0 \\
    e^{2n \phi(z)} & 1 \end{array} \right)
    \qquad \mbox{ for } z \in \Omega_3.
\end{equation*}
Then from the RH problem for $U$ and the factorization (\ref{eq47})
we obtain that $T$ is the unique solution of the following Riemann--Hilbert problem:  determine a $2\times 2$ matrix valued function
$T : \mathbb C \setminus \Sigma^T \to \mathbb C^{2\times 2}$ such that
the following hold:
\begin{enumerate}
\item[(a)] $T(z)$ is analytic for $z \in \mathbb C \setminus \Sigma^T$,
\item[(b)] $T(z)$ possesses continuous boundary values for $z \in \Sigma^T$,
denoted by $T_+(z)$ and $T_-(z)$, and
\begin{equation*} 
    T_+(z) = T_-(z)
   \left(\begin{array}{cc} 0 & 1 \\ -1 & 0 \end{array} \right)
   \qquad \mbox{ for } z\in \gamma_A,
\end{equation*}
\begin{equation*} 
    T_+(z) = T_-(z) \left(\begin{array}{cc} 1 & 0 \\
    e^{2n \phi(z)} & 1 \end{array} \right)
    \qquad \mbox{ for } z \in \gamma_\pm,
\end{equation*}
\begin{equation*} 
    T_+(z) = T_-(z) \left( \begin{array}{cc}
     1 & e^{-2 n  \widetilde{\phi}(z)} \\
     0 & 1 \end{array} \right)
     \qquad \mbox{ for } z\in \Sigma_-,
\end{equation*}
and
\begin{equation*} 
    T_+(z) = T_-(z) \left( \begin{array}{cc}
     1 & e^{-2 n \phi(z)} \\
     0 & 1 \end{array} \right)
     \qquad \mbox{ for } z\in \Sigma_+.
\end{equation*}
\item[(c)] $T(z)$ behaves like the identity at infinity:
\begin{equation*} 
    T(z) = I +\mathcal O\left(\frac{1}{z}\right)
    \qquad \mbox{ as } z \to \infty, \quad z \in \mathbb C \setminus \Sigma^T.
\end{equation*}
\end{enumerate}

The global (or outer) parametrix corresponding to this problem is a matrix $N : \mathbb C \setminus \gamma_A \to \mathbb C^{2\times 2}$ given by (see e.g.~\cite{MR99m:42038}, \cite[Section 7.3]{MR2000g:47048}, or \cite[Section 5.1]{Kuijlaars/McLaughlin:01})
\begin{equation*} 
    N(z) =
\begin{pmatrix}
    \left( \frac{1+R'_A(z)}{2}\right)^{1/2} &  -\left( \frac{1-R'_A(z)}{2}\right)^{1/2} \\
     \left( \frac{1-R'_A(z)}{2}\right)^{1/2} &  \left( \frac{1+R'_A(z)}{2}\right)^{1/2}    
    \end{pmatrix},
\end{equation*}
where we take the main branches of the square roots.

It is easy to verify that
\begin{enumerate}
\item[(a)] $N(z)$ is analytic for $z \in \mathbb C \setminus \gamma_A$,
\item[(b)] $N(z)$ possesses continuous boundary values for $z \in
\Gamma \setminus \{\zeta_-, \zeta_+\}$,
denoted by $N_+(z)$ and $N_-(z)$, and
\begin{equation*} 
    N_+(z) = N_-(z)
    \left(\begin{array}{cc} 0 & 1 \\ -1 & 0 \end{array} \right)
    \qquad \mbox{ for } z \in \gamma_A \setminus \{\zeta_-, \zeta_+\},
\end{equation*}
\item[(c)] $N(z) = I + \mathcal O\left(1/z\right)$ for $z \to \infty$.
\item[(d)] Near the endpoints $\zeta_\pm$ it satisfies
$$
N(z)=\mathcal O(|z-\zeta_\pm|^{-1/4})\,, \quad z \to \zeta_\pm.
$$
\end{enumerate}
Since the behavior of $N$ does not match the desired behavior at the endpoints $\zeta_\pm$ of $\gamma_A$, we need one more construction  around these points, namely the so-called local parametrix, well described for instance in \cite{MR2000g:47048}.

From its definition in (\ref{def:phi}) it is easy to see that the $\phi$-function has
a convergent expansion
\begin{equation*} 
    \phi(z) = (z-\zeta_+)^{3/2} \sum_{k=0}^{\infty} c_k (z-\zeta_+)^k,
    \qquad c_0 \neq 0,
\end{equation*}
in a neighborhood of $\zeta_+$. The factor $(z-\zeta_+)^{3/2}$ is defined with a cut
along $\gamma_A \cup \Sigma_-$. Then $f$, defined by
$$
    f(z) = \left[\frac{3}{2} \phi(z) \right]^{2/3},
$$
is  analytic in a neighborhood of $\zeta_+$. We choose the $2/3$-root with
a cut along $\gamma_A$ and such
that $f(z) > 0$ for $z \in \Sigma_+$. Recall that $\Re(\phi) > 0$ on $\Sigma_+$.
Then $f(\zeta_-) = 0$ and $f'(\zeta_-) \neq 0$. Therefore we can deform $\Sigma_+$ locally and choose
$\delta$ so small that $t = f(z)$ is a one-to-one mapping from $U_{\delta}$
onto a convex neighborhood $f(U_{\delta})$ of $t = 0$.
Under the mapping $t = f(z)$, we then have that
$\Sigma_+ \cap U_{\delta}$ corresponds to $(0,+\infty) \cap f(U_{\delta})$
and that $\gamma_A \cap U_{\delta}$ corresponds to $(-\infty,0] \cap f(U_{\delta})$. We can also deform $\gamma_\pm$ in such a way that for an arbitrary, but fixed $\theta \in (\pi/3, \pi)$,   $f$ maps the portion of $\gamma_+$ and $\gamma_-$ in $U_\delta$ to the rays $\{ \arg \zeta = \theta\}$ and $\{ \arg \zeta = -\theta\}$, respectively.

With this mapping we take $P$, analytic for $z \in U_{\delta} \setminus \Sigma^T$,
and continuous on $\overline{U}_{\delta} \setminus \Sigma^T$, given by
\begin{equation*} 
    P(z) = E(z) \Psi^{\theta}(n^{2/3} f(z)) e^{n\phi(z)\sigma_3/2},
\end{equation*}
where
\begin{equation*} 
    E(z) = \sqrt{\pi} e^{\frac{\pi i}{6}} \left(\begin{array}{cc}
        1 & -1 \\ -i & -i \end{array} \right)
        \left(\frac{n^{1/6} f(z)^{1/4}}{a(z)} \right)^{\sigma_3}
\end{equation*}
and $\Psi^{\theta}$ is an explicit matrix valued function built out of
the Airy function $\Ai$ and its derivative $\Ai'$ as follows
\begin{equation*} 
    \Psi^{\theta}(t) = \left\{ \begin{array}{ll}
    \left(\begin{array}{cc}
    \Ai(t) & \Ai(\omega^2 t) \\
    \Ai'(t) & \omega^2 \Ai'(\omega^2 t) \end{array} \right)
    e^{- \frac{\pi i}{6} \sigma_3} &
    \mbox{for } 0 < \arg t < \theta, \\[10pt]
    \left(\begin{array}{cc}
    \Ai(t) & \Ai(\omega^2 t) \\
    \Ai'(t) & \omega^2 \Ai'(\omega^2 t) \end{array} \right)
    e^{- \frac{\pi i}{6} \sigma_3}
    \left(\begin{array}{cc} 1 & 0 \\ -1 & 1 \end{array} \right) &
    \mbox{for } \theta < \arg t < \pi, \\[10pt]
    \left(\begin{array}{cc}
    \Ai(t) & -\omega^2 \Ai(\omega t) \\
    \Ai'(t) & -\Ai'(\omega t) \end{array} \right)
    e^{-\frac{\pi i}{6} \sigma_3}
    \left(\begin{array}{cc} 1 & 0 \\ 1 & 1 \end{array} \right) &
    \mbox{for } -\pi < \arg t < -\theta, \\[10pt]
    \left(\begin{array}{cc}
    \Ai(t) & -\omega^2 \Ai(\omega t) \\
    \Ai'(t) & -\Ai'(\omega t) \end{array} \right)
    e^{- \frac{\pi i}{6} \sigma_3} &
    \mbox{for } -\theta < \arg t < 0,
    \end{array} \right.
\end{equation*}
with $\omega = e^{2\pi i/3}$.

A similar construction yields a parametrix $\tilde{P}$ in a neighborhood
$\tilde{U}_{\delta} = \{ z \mid |z-\zeta_-| < \delta\}$, see \cite{Kuijlaars/McLaughlin:01} for details.

Finally, using $N$, $P$, and $\tilde{P}$, we define for every $n \in \mathbb N$,
\begin{align*}
    S(z) & = T(z) N(z)^{-1} \qquad
    \mbox{for } z \in \mathbb C \setminus
        (\Sigma^T \cup \overline{U}_{\delta} \cup \overline{\tilde{U}}_{\delta}), \\
         S(z) &  = T(z) P(z)^{-1} \qquad \mbox{for } z \in U_{\delta} \setminus \Sigma^T, \\
    S(z) & = T(z) \tilde{P}(z)^{-1} \qquad \mbox{for } z \in \tilde{U}_{\delta} \setminus \Sigma^T.
\end{align*}

Then $S$ is defined and analytic on $\mathbb C \setminus \left(\Sigma^T \cup \partial U_{\delta}
\cup \partial \tilde{U}_{\delta} \right)$. However it follows from the construction that
$S$ has no jumps on $\gamma_A$ and on $\Sigma^T \cap (U_{\delta} \cup \tilde{U}_{\delta})$.
Therefore $S$ has an analytic continuation to $\mathbb C \setminus \Sigma^S$,
where $\Sigma^S$ is the contour indicated in Figure \ref{fig:lens2}. Contour $\Sigma^S$ splits the complex plane in the subdomains also indicated in Figure \ref{fig:lens2}.

\begin{figure}[htb]
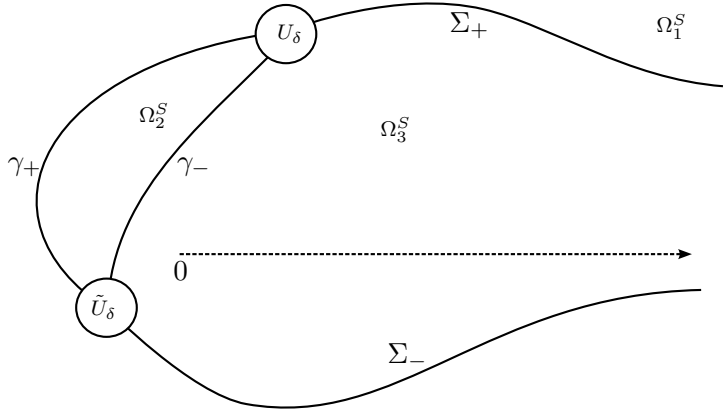

\centering
\begin{overpic}[scale=1]%
{lens2}%
 \put(20,19){\small $0 $}
  \put(-4,35){\small $\gamma_+ $}
    \put(20.5,35){\small $\gamma_- $}
          \put(60,55){\small $\Sigma_+ $}
      \put(35,54){\scriptsize $U_{\delta}$}
        \put(8,14){\scriptsize $\tilde{U}_{\delta}$}
\put(51,7){\small $\Sigma_- $}
\put(90,55){\scriptsize $\Omega_1^S $}
\put(15,42){\scriptsize $\Omega_2^S $}
\put(50,40){\scriptsize $\Omega_3^S $}
\end{overpic}
\caption{Contour $\Sigma^S $, and domains $U_{\delta}$, $\tilde{U}_{\delta}$, $\Omega_j^S$,
$j=1,2,3$.}
\label{fig:lens2}
\end{figure}

We also have that
$$\| P(z) N^{-1}(z) - I \| \leq \frac{C}{n}\,\,\,\mbox{for }
z \in \partial U_{\delta}\,\,\,\text{and}\,\,\, \| \tilde{P}(z)
N^{-1}(z) - I \| \leq \frac{C}{n}
    \,\,\,\mbox{for } z \in \partial \tilde{U}_{\delta},$$
with a constant $C$ that is independent of $z$ (it can also be chosen
independently of the value of $A$ for $A$ in a compact subset of $\C$,
see \cite{Kuijlaars/McLaughlin:01}), and standard arguments show that jump matrices for $S$ are close to the identity matrix if $n$
is large.

Although in the previous analysis we have used the value  $A$ (assumed fixed), as it was explained in the introduction to Theorem~\ref{thm:strongA}, all of the notions and results introduced
before are $n$-dependent. However, from the asymptotic assumption \eqref{limits} we have that the curves $\gamma_n$ tend to the
limiting curve $\gamma_A$, etc.

We observed already that the jump matrix
for $S_n$ is $I + O(1/n)$ uniformly on $\Sigma_n^S$ as $n \to \infty$.
In addition, the jump matrix converges to the identity matrix as $z \to \infty$
along the unbounded components of $\Sigma_n^S$ sufficiently fast, so that the
jump matrix is also close to $I$ in the $L^2$-sense. Since the contours $\Sigma_n^S$
are only slightly varying with $n$, we may follow standard arguments  to conclude that
\begin{equation} \label{eq72}
    S_n(z) = I + \mathcal O \left(\frac{1}{n}\right) \,\, \mbox{ as } n \to\infty
\end{equation}
uniformly for $z \in \mathbb C \setminus \Sigma_n^S$.

Finally, unraveling  the steps $Y_n \mapsto U_n \mapsto T_n \mapsto S_n$
and using (\ref{eq72}), we obtain strong asymptotics for $Y_n$ in all
regions of the complex plane. In particular we are interested in the
(1,1) entry of $Y_n$, since this is the monic generalized Laguerre
polynomial. We are not describing the details of this straightforward calculation here, and refer again the interested reader to \cite{Kuijlaars/McLaughlin:01}.

This completes the proof of Theorem  \ref{thm:strongA}.

We finish by proving Corollary \ref{corollary:zeros}. The assertion that all zeros of $p_n$ are in $\mathcal O$ (accumulate at $\gamma_A$) is a direct consequence of (a) of Theorem \ref{thm:strongA} and the fact that $1+R'_A(z)\neq 0$ in $\C\setminus \gamma_A$.

Observe that $\Re (R_A')(z)=0$ if and only if $z \in  [\zeta_-,\zeta_+]$, where as in Lemma~\ref{lemma:convexity}, we denote by $ [\zeta_-,\zeta_+]$ the straight segment joining $\zeta_-$ and $\zeta_+$.

By Lemma~\ref{lemma:convexity}, for $A\in \C\setminus [-1,+\infty)$, $\gamma_A\cup [\zeta_-,\zeta_+]$ is a boundary of a simply connected domain; let us denote it by $\mathcal D$. Thus,  $\Re (R_A')$ preserves sign both in $\mathcal D$ and in $\C\setminus \mathcal D$, and these signs are opposite. Since 
$$
\lim_{z\to \infty} R_A'(z)=1,
$$
we conclude that $\Re (R_A')(z)<0$ for $z \in \mathcal D$,  and $\Re (R_A')(z)>0$ for  $z \in \C\setminus \mathcal D$. In particular,
\begin{equation}\label{ineq:R'}
|1+ R_A'(z)| < |1- R_A'(z)| \quad \text{if and only if} \quad z \in \mathcal D.
\end{equation}
Furthermore, for $A<-1$, the inner boundary of $\mathcal D$ corresponds to the ``$-$'' side of $\gamma_A$. Since for $A\in \C\setminus [-1,+\infty)$, $\gamma_A\neq [\zeta_-,\zeta_+]$, we conclude that this property holds for every value of $A\in \C\setminus [-1,+\infty)$. In particular, we have proved that \eqref{ineq:R'} holds only on the ``$-$'' side of $\gamma_A$.

By assertions (b) and (c) of Theorem \ref{thm:strongA}, zeros of $p_n$ in a neighborhood of $\gamma_n$ must satisfy
\[
\left| \frac{1-R_{n}^{\prime }(z)}{1+R_{n}^{\prime }(z)}\right|
^{1/2}e^{2n\Re \phi _{n}(z)}=1+\mathcal O \left(\frac{1}{n}\right). 
\]%
Since by \eqref{signsPhi}, $\Re \phi _{n}(z)< 0$ on both sides of $\gamma$, it remains to use \eqref{ineq:R'} to conclude the proof.

\section*{Acknowledgements}

The  second and the third authors (AMF and PMG) have been supported in part by the research project MTM2011-28952-C02-01 from the Ministry of Science and Innovation of Spain and the European Regional Development Fund (ERDF),  by Junta de Andaluc\'{\i}a, Research Group FQM-229, and by Campus de Excelencia Internacional del Mar (CEIMAR) of the University of Almer\'{\i}a. Additionally, AMF was supported by the Excellence Grant P09-FQM-4643  from Junta de Andaluc\'{\i}a. 
 
MJA and FT have been partially supported  by the research unit UR11ES87 from the University of Gab\`es and the Ministry of Higher
Education and Scientific Research in Tunisia.

The authors acknowledge the contribution of the anonymous referee whose careful reading of the manuscript helped to improve the presentation.

\section*{Bibliography}

%

\def\cprime{$'$}

\end{document}